\documentclass{article}
\usepackage[paper=a4paper,left=30mm,right=30mm,top=25mm,bottom=25mm]{geometry}
\usepackage{amsmath,amsthm,amssymb,mathtools}
\usepackage{enumitem}
\usepackage{booktabs}
\usepackage{graphicx}
\usepackage{float}
\usepackage{placeins}
\usepackage{csquotes}
\usepackage[numbers]{natbib}
\usepackage{hyperref}
\usepackage{color}
\usepackage{comment}

\newtheorem{theorem}{Theorem}[section]
\newtheorem{lemma}[theorem]{Lemma}
\newtheorem{corollary}[theorem]{Corollary}
\newtheorem{proposition}[theorem]{Proposition}
\newtheorem{definition}[theorem]{Definition}
\newtheorem{remark}[theorem]{Remark}
\newtheorem{example}[theorem]{Example}

\newcommand{\CC}{\mathcal{C}}
\newcommand{\de}{\mathrm{\,d}}
\newcommand{\lam}{\lambda}
\newcommand{\brho}{\overline\varrho}
\newcommand{\Var}{\operatorname{Var}}

\newcommand{\E}{\mathbb{E}}

\newcommand{\N}{\mathbb{N}}

\newcommand{\R}{\mathbb{R}}

\newcommand{\cM}{\mathcal{M}}

\newcommand{\cS}{\mathcal{S}}
\newcommand{\cU}{\mathcal{U}}

\newcommand{\eqd}{\stackrel{\mathrm{d}}=}

\definecolor{light-gray}{gray}{0.95}
\definecolor{darkblue}{rgb}{0,0,.5}
\definecolor{foxred}{rgb}{0.7, 0.11, 0.11}

\title{
The exact Spearman rho--footrule region via optimal transport with applications to finite rankings, mixability, and Chatterjee's rank correlation

}
\author{%
	Jonathan Ansari\thanks{Department of Mathematics, Paris Lodron Universit\"at Salzburg,
	Hellbrunner Stra\ss e 34, 5020 Salzburg, Austria.}
	\and
	Marcus Rockel\thanks{Department of Quantitative Finance, Albert-Ludwigs-Universit\"at Freiburg,
	Rempartstra\ss e 16, 79098 Freiburg, Germany.}%
}
\date{\today}

\begin{document}
\maketitle

\begin{abstract}
We solve the open problem of determining the maximal value of Spearman's rho when Spearman's footrule is prescribed, thereby completing the exact attainable region of these two quantities.
To prove this result, we reformulate the underlying copula optimization problem as an optimal transport problem with a linear moment constraint and construct the unique optimal coupling through a matching feasible dual potential and its contact set.
Equivalently, this coupling minimizes the variance of \(|U-V|\) among all couplings of \(U,V\sim \cU(0,1)\) with prescribed mean
\(\E|U-V|\).
Our main result admits several applications: 
First, in the context of finite rankings, we obtain an improved Cauchy--Schwarz inequality between Spearman's footrule distance and the associated quadratic rank difference.
Second, in the framework of generalized mixability, we characterize the
attainable constant values of \(|U'+V'|\), for \(U',V'\sim \cU(-\tfrac 1 2, \tfrac 1 2)\), and determine the minimal quadratic deviation for a given mean.
Third, we derive explicit bounds relating Chatterjee's rank correlation
\(\xi(X,Y)\), which can detect complex functional dependence of \(Y\) on \(X\), to the copula correlation ratio---a rank-based fraction of explained variance---by exploiting their conditional i.i.d.\ representations in terms of Spearman's footrule and Spearman's rho.

\end{abstract}

\medskip
\noindent\textbf{Keywords:}
Chatterjee's rank correlation; copula correlation ratio; exact attainable region; finite ranking; generalized mixability; Kantorovich duality; optimal transport; shuffle of min; Spearman's footrule distance; Spearman's rho

\section{Introduction}\label{sec:intro}

Measures of association quantify the dependence structure between two random variables \(X\) and \(Y\).
Classical rank correlations such as Spearman's rho and Kendall's tau attain values in \([-1,1]\) and
describe the degree of positive or negative dependence. In
particular, they are designed to capture the strength of
    \emph{monotone} association; see e.g. \cite[Section 2.4]{durante2016principles} for an overview.
A different notion of dependence has received considerable attention in
recent years. Dependence measures such as Chatterjee's rank correlation \cite{chatterjee2021new}
quantify the degree of \emph{functional dependence} of \(Y\) on \(X\).
They take values in \([0,1]\), where the value \(0\) characterizes
independence and the value \(1\) characterizes perfect functional
dependence, meaning that \(Y\) is almost surely a measurable function of
\(X\). Importantly, this function need not be monotone, but can be arbitrarily complex. Such measures
therefore detect forms of dependence that may be invisible to classical
rank correlations; see \cite{chatterjee2024survey} for a review.

Generally, measures of association capture distinct aspects of dependence and
therefore provide complementary information about the underlying
dependence structure.
A natural question is to what extent their values constrain one another
and how they can be interpreted in terms of each other. Given two measures of association \(\kappa_1\) and \(\kappa_2\), one may consider their
\emph{exact attainable region}
\begin{align*}
    \Omega_{\kappa_1,\kappa_2}
    :=
    \bigl\{
        (\kappa_1(C),\kappa_2(C)):C\in\CC
    \bigr\},
\end{align*}
where \(\CC\) denotes the class of bivariate copulas. We generally assume that \(X\) and \(Y\) have a continuous distribution function, so that by Sklar's theorem (see Eq. \eqref{the_Sklar}) their dependence structure is fully captured by a unique copula \(C=C_{X,Y}\). The region \(\Omega_{\kappa_1,\kappa_2}\)
describes all pairs of values that can occur simultaneously and thus
provides a sharp quantitative comparison of the two measures.
Following the solution of the classical Kendall--Spearman problem by
\citet{schreyer2017exact}, exact regions have been determined for
several pairs of measures of association; see e.g.
\cite{kokolbukovsek2021spearman,kokolbukovsek2022exact,
kokolbukovsek2023exact,ansari2026exact}.

In this paper, we solve the open problem of maximizing Spearman's rho when the value of Spearman's footrule is prescribed. While Spearman's rho is a well-established rank correlation, Spearman's footrule is less familiar, but no less relevant; see \cite{genest2010spearman} and the applications below. 
Both admit a representation in terms of the underlying copula \(C = C_{X,Y}\) via
\begin{equation}\label{eq:defs}
     \varrho(C)
    \coloneqq
    12\int_{[0,1]^2}C(u,v)\,\de\lam_2(u,v)-3,
    \qquad
    \phi(C)
    \coloneqq
    6\int_0^1 C(t,t)\,\de t-2;
\end{equation}
see \cite{durante2016principles,nelsen2006introduction} for general background.
Further, they are affine in $C$ and continuous with respect to uniform convergence, so the region $\Omega_{\varrho,\phi}=\{(\varrho(C),\phi(C)):C\in\CC\}$ is convex and compact.
While \(\varrho\) attains values from \(-1\) to \(1\), \(\phi\) ranges from \(-\tfrac 1 2\) to \(1\).
\citet{kokolbukovsek2024exact} determined the $\varrho$-minimal boundary \(\underline{\varrho}(x):= \tfrac{2\sqrt3}{9}(1+2x)^{3/2}-1\) of $\Omega_{\varrho,\phi}$ exactly and established the upper bound \(u(x) \coloneqq 1-\tfrac{2}{3}(1-x)^2\), so that
\begin{equation}\label{eq:ksbound}
	\underline{\varrho}(\phi(C)) \leq \varrho(C) \le u(\phi(C)) \quad \text{for all } C\in \CC.
\end{equation}
The lower bound \(\underline{\varrho}(x)\) is attained for all \(x\in[-\tfrac12,1]\) by the Bertino copulas \(B_x\) in \eqref{def:lower-bertino}.
In contrast, the upper bound \(u(x)\) is attained at the countably many points $x_N\coloneqq 1-\tfrac{3}{2N}$, $N\ge1$, by equidistant even shuffles of min \cite{mikusinski1992shuffles}, as well as at $x=1$, and it is not globally sharp.
\citet{tschimpke2025revisiting} gave alternative proofs of both bounds and constructed the attainable curve in \eqref{curve_s}, which, however, is below the optimal, yet unknown curve
\begin{align}\label{defmaxprob}
    \overline{\varrho}(x) := \max\{ \varrho(C) \mid C\in \CC,\, \phi(C) = x\}, \quad \text{} x\in [-\tfrac 1 2,1].
\end{align}

In our main result, Theorem~\ref{thm:main}, we solve the optimization problem \eqref{defmaxprob} and derive a closed-form expression for $\overline{\varrho}$. Further, for every $x\in[-\tfrac12,1]$, we explicitly construct a copula $C_x$ such that $\phi(C_x)=x$ and $\varrho(C_x)=\overline{\varrho}(x)$ and prove that it is unique.
Key is the moment representation
\begin{align}\label{eq_rep_phi_rho}
    \varrho(C)
    =
    1-6\E|U-V|^2
    \qquad \text{and} \qquad
    \phi(C)
    =
    1-3\E|U-V| \qquad \text{for } (U,V)\sim C; 
\end{align}
see Lemma~\ref{lem:moments}. This allows us to solve the copula maximization problem \eqref{defmaxprob} with tools from optimal transport theory: we minimize \(\E |U-V|^2\) over all couplings of \(U,V\sim \cU(0,1)\) under the linear constraint \(\E|U-V|=m\); see the paragraph after Theorem~\ref{thm:main} for a sketch of the proof that we carry out in Sections \ref{sec:attain} and \ref{sec:dual}.

The moment representation of \(\varrho\) and \(\phi\) in \eqref{eq_rep_phi_rho} and the closed-form expression of \(\brho\) in  \eqref{eq:boundary} admit several interesting consequences and applications.
First, we obtain an improved Cauchy--Schwarz inequality for finite rankings. For a permutation
\(\pi\) of \(\{1,\ldots,n\}\), let
\begin{align*}
    D_\pi
    :=
    \sum_{i=1}^n |i-\pi(i)| \qquad \text{and} \qquad
    S_\pi
    :=
    \sum_{i=1}^n (i-\pi(i))^2
\end{align*}
denote the Spearman footrule distance and the corresponding quadratic rank differences, respectively; see e.g. \cite{clemencon2013ranking,diaconis1977spearman}. Then, by Theorem \ref{cor:permutation-costs}, the term \(V_{\operatorname{min}}(m)\geq 0\) in \eqref{def_Vmin} sharpens the elementary
Cauchy--Schwarz inequality \(D_\pi^2/n\leq S_\pi\) in \cite[p.~268]{diaconis1977spearman} to
\[
    \frac{D_\pi^2}{n} + n^3 V_{\operatorname{min}}(m) \leq S_\pi \qquad \text{for } m = \frac{D_\pi}{n^2}.
\]
We refer to Section \ref{sec_21} for details.

A second application arises in the context of mixability. The standard notion of mixability asks whether a sum of identically distributed random variables can be constant; see e.g. \cite{wang2015current}. Generalized mixability in \cite{bignozzi2015studying} extends this concept and studies supermodular functions of random vectors. We consider the supermodular function \(\psi(u,v) = |u+v|\) and obtain that 
\begin{align}\label{eq_gen_mix}
    \psi(U',V') = m \text{ a.s.} \quad \text{for } U',V'\sim \cU(-\tfrac 1 2, \tfrac 1 2) \quad \Longleftrightarrow\quad m\in \mathfrak{C} := \{0\}\cup
        \left\{\frac{1}{2N}\colon N\in\N\right\}.
\end{align}
More generally, we determine, for a prescribed value \(m\in [0,\tfrac 1 2]\), the minimal quadratic deviation of \(\psi(U',V')\) to \(m\) over all couplings of \(U',V'\sim \cU(-\frac 1 2,\frac 1 2)\); see Theorem \ref{cor:distance-mixability}.
Interestingly, there are countably many centers \(m\) for the \(\cU(-\tfrac 1 2, \tfrac 1 2)\)-distribution with respect to the mixing function \(\psi\), whereas standard mixability for an integrable distribution admits at most one center.

A third application of Theorem \ref{thm:main} arises for Chatterjee's rank correlation \(\xi\) in \eqref{def_chattxi}, recently introduced in \cite{chatterjee2021new}. As mentioned above, it ranges from \(0\) to \(1\) and can detect arbitrary functional relationships of \(Y\) on \(X\). However, there are still open questions, for example, on the interpretation of its values. A natural question is how much variance of \(Y\) can be explained by \(X\) when the value of \(\xi(X,Y)\) is known. To give a first answer to this question, we consider the copula correlation ratio \(\eta(X,Y)\) in \eqref{def_cop_cor_ratio}, a rank-transformed fraction of explained variance \cite{shih2021copula}. Interestingly, \(\xi\) and \(\eta\) admit representations in terms of Spearman's footrule and rho via
\begin{align}
    \xi(C) = \phi(C\ast C) \qquad \text{and} \qquad \eta(C) = \varrho(C\ast C);
\end{align}
see \eqref{eq_xi_DSS} and \eqref{eq_rep_ccorr}, where \(C\ast C\) denotes the Markov product in \eqref{def_MK_product}.
Applying the exact \((\varrho,\phi)\)-region in Corollary \ref{cor:main} and Cauchy--Schwarz inequality, we obtain the bounds 
\begin{equation}\label{eq:xi-rank-correlation-ratio}
    \max\!\left\{
        0,\underline{\varrho}\bigl(\xi(X,Y)\bigr)
    \right\}
    \leq
    \eta(X,Y)
    \leq
    \min\!\left\{
        \overline{\varrho}\bigl(\xi(X,Y)\bigr),
        2\xi(X,Y)
    \right\},
\end{equation}
see Theorem \ref{cor:xi-rank-sobol}.
These bounds are quite tight due to Proposition \ref{prop:xi-rank-sobol-inner} where we constructed an inner enclosure of the \((\xi,\eta)\)-region. For an illustration of the \(\xi\)-\(\eta\)-bounds, we refer to Figure \ref{fig:xi-rank-sobol-bounds}.

\subsection{Main result}

For \(N\in \N\), recall \(x_N = 1 - \frac{3}{2N}\) and define 
\begin{equation}\label{eq:intervals}
	I_N \coloneqq [x_N,x_{N+1}) = \left[1-\frac{3}{2N},1-\frac{3}{2N+2}\right).
\end{equation}
These intervals cover $[-\tfrac12,1)$ and are pairwise disjoint.
The following theorem, visualized in Figure~\ref{fig:region}, is our main result.

\begin{theorem}[Closed-form expression for \(\brho\)]\label{thm:main}
	Let $x\in I_N$, put
	\(
	m\coloneqq\tfrac{1-x}{3}\in[\tfrac{1}{2N+2},\tfrac{1}{2N}],
	\)
	let $\ell\in\{\tfrac{1}{2N},\tfrac{1}{2N+2}\}$ be an endpoint nearest to $m$ (if the two endpoints are equally near, either choice is allowed; the two values of \eqref{eq:boundary} coincide), and set
	\(
	\Delta\coloneqq|m-\ell|.
	\)
    Then we have
	\begin{equation}\label{eq:boundary}
		\brho(x) = 1 - 6\ell(2m-\ell) - \frac{4\,\Delta^{3/2}}{\sqrt{N(N+1)}}
		= u(x) - \Bigl(\frac{4\,\Delta^{3/2}}{\sqrt{N(N+1)}} - 6\Delta^2\Bigr).
	\end{equation}
    In particular, the symmetric copula \(C_x\) in \eqref{def_C_x} satisfies \(\varrho(C_x) = \brho(x)\) and \(\phi(C_x) = x\).
    Moreover, it is the unique copula with these two properties.
    At the remaining endpoint \(x=1\), we have \(\brho(1)=1\), uniquely attained by the upper Fr\'{e}chet copula \(C_1(u,v)=\min\{u,v\}\).
\end{theorem}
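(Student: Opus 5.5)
Using the moment representation \eqref{eq_rep_phi_rho}, problem \eqref{defmaxprob} becomes: minimize \(\E|U-V|^2\) over all couplings \(\pi\) of \(\cU(0,1)\) with itself, subject to \(\E|U-V|=m=\tfrac{1-x}{3}\). Then \(\brho(x)=1-6\,V^*(m)\), where \(V^*(m)\) is the minimal value. Since \(\E|U-V|^2=\Var|U-V|+m^2\), this is equivalent to minimizing the variance of \(|U-V|\). I will treat it as a linear program in \(\pi\). The plan has a primal half and a dual half, as announced after the theorem: a lower bound by duality, and a matching construction.

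\emph{Primal construction.} For \(m=\tfrac{1}{2N}\), the even equidistant shuffle of min makes \(|U-V|\equiv\tfrac1{2N}\) almost surely. It consists of \(N\) blocks of width \(\tfrac1N\), each of whose halves are swapped. This gives \(\varrho=u(x_N)\) with zero variance. For \(m\) strictly between \(\tfrac1{2N+2}\) and \(\tfrac1{2N}\), the copula \(C_x\) in \eqref{def_C_x} should be a symmetric perturbation of the shuffle at the nearest endpoint \(\ell\). On a portion of mass proportional to \(\sqrt{\Delta}\) it moves the displacement \(|U-V|\) away from \(\ell\), so that the variance cost is of order \(\Delta^{3/2}\). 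I will verify directly that \(\phi(C_x)=x\) and compute \(\varrho(C_x)\) to obtain \eqref{eq:boundary}. This step is routine piecewise integration. The constant \(\tfrac{4}{\sqrt{N(N+1)}}\) should come from optimizing the perturbation length.

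\emph{Dual certificate.} Introduce a Lagrange multiplier \(\lambda\) and consider the cost \(c(u,v)=(u-v)^2-\lambda|u-v|\). Kantorovich duality gives
\[
\inf_\pi \E_\pi[(U-V)^2]\ \ge\ \lambda m+\int_0^1 f\,\de\lam_1+\int_0^1 g\,\de\lam_1
\]
for any \(f,g\) with \(f(u)+g(v)\le c(u,v)\). By symmetry take \(g=f\). Choose \(\lambda\) of order \(2\ell\), adjusted by a term of order \(\sqrt\Delta\), and build \(f\) explicitly. It should be piecewise quadratic, with period-type structure on a grid of mesh \(\tfrac1N\) or \(\tfrac1{N+1}\), adapted to the blocks of \(C_x\). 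Then check two things:
\begin{enumerate}[label=(\roman*)]
\item feasibility: \(2\)-variable inequality \(f(u)+f(v)\le c(u,v)\) on \([0,1]^2\);
\item equality on the support of \(C_x\) (the contact set).
\end{enumerate}
Complementary slackness then shows that \(C_x\) is optimal and that the dual value equals \((1-\brho(x))/6\).

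\emph{Uniqueness and the endpoint.} Any optimal coupling must be concentrated on the contact set \(\{f(u)+f(v)=c(u,v)\}\). I will show that this set is a finite union of line segments, and that the uniform marginals together with the moment constraint force the mass distribution on these segments, so that \(C_x\) is the only possibility. At \(x=1\) we have \(m=0\), which forces \(U=V\) almost surely, i.e.\ \(C_1=M\). That \(\brho\) is well defined at equidistant midpoints follows by checking that both formulas in \eqref{eq:boundary} agree.

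\emph{Main obstacle.} I expect the hardest step to be constructing the dual potential \(f\) and verifying global feasibility of \(f(u)+f(v)\le c(u,v)\), especially near the boundary of \([0,1]\) and across block boundaries. My first attempt will be to take \(f(u)=-\sup_v\{f(v)-c(u,v)\}\), i.e.\ the \(c\)-transform, in self-consistent form. I will guess \(f\) from the first-order conditions along the support of \(C_x\), and then reduce the feasibility check to one-dimensional inequalities between piecewise quadratics on each block pair.
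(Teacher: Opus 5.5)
\textbf{Verdict: same strategy as the paper, but the hard steps are only announced, and two of your concrete guesses would fail.}

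Your plan has the paper's architecture: the moment reformulation, a multiplier for $\E|U-V|=m$, an explicit symmetric potential certifying $C_x$ by complementary slackness, and uniqueness via the contact set. Your scalings are also right: the multiplier is $2\ell\mp O(\sqrt\Delta)$ and the perturbed mass is of order $\sqrt\Delta$.

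\textbf{The potential.} Call your multiplier $\theta$, as in the paper, and set $k(r)\coloneqq r^2-\theta r$. The potential does not live on a grid of mesh $\tfrac1N$ or $\tfrac1{N+1}$. Inside $\{a\le b\}$, the support of $C_x$ consists of translates by $(j\theta,j\theta)$ of a single pattern. Here $\theta=2\ell\mp v$ with $v=\sqrt{\Delta/(N(N+1))}$, taking the minus sign for $\ell=\tfrac1{2N}$ and the plus sign for $\ell=\tfrac1{2N+2}$. Your own first-order conditions give $f'(a)=\theta-2(b-a)$ and $f'(b)=2(b-a)-\theta$ at support points $(a,b)$ with $a<b$. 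Hence $f'$ is forced to be $\theta$-periodic, and $f$ turns out to be so as well: the period is the multiplier itself. This periodicity, combined with $k(\theta-r)=k(r)$, is what transfers equality from the segments within one period to the segments linking consecutive periods, whose displacement is $\theta-r$.

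A smaller point: the constant $4/\sqrt{N(N+1)}$ does not come from optimizing a perturbation length. The length $v$ is pinned by the constraint $\Delta=N(N+1)v^2$. The constant reflects the density $2N(N+1)$ that $|U-V|$ has on $[\ell,\ell+v]$, resp.\ $[\ell-v,\ell]$, under $C_x$.

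\textbf{Feasibility.} The missing idea is convexity in the displacement, which replaces your block-pair case analysis.
\begin{itemize}
\item Extend $f$ $\theta$-periodically to $\R$ and prove $G(u,r)\coloneqq k(r)-f(u)-f(u+r)\geq0$ for all $u\in\R$ and $r\geq0$. The boundary of $[0,1]$ then never enters.
\item Since $k(r_0+n\theta)-k(r_0)=n\theta\bigl(2r_0+(n-1)\theta\bigr)\geq0$, it suffices to take $r\in[0,\theta]$.
\item Since $f'$ is piecewise linear with slopes in $\{0,-\tfrac2N,\tfrac2{N+1}\}$, all below $k''=2$, the map $r\mapsto G(u,r)$ is strictly convex.
\item Put $r_*(u)\coloneqq\tfrac{\theta-f'(u)}{2}$; it lies in $(0,\theta)$ because $|f'|\le v<\theta$. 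One checks $f'\bigl(u+r_*(u)\bigr)=-f'(u)$, so $r_*(u)$ is the minimizer.
\item By the envelope argument, $u\mapsto G(u,r_*(u))$ has derivative zero almost everywhere. Normalizing $f$ at one support point gives $\min_r G(u,r)=0$ for every $u$.
\end{itemize}

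\textbf{Uniqueness.} Strict convexity is also what uniqueness needs. Being a finite union of segments is not enough: $\{v=u\}\cup\{v=1-u\}$ carries infinitely many couplings with uniform marginals and the same value of $\E|U-V|$. What works is the following.
\begin{itemize}
\item The contact set in $\{a\le b\}$ is the graph of $\tau(a)\coloneqq a+r_*(a)\geq a+\tfrac{\theta-v}{2}$, and it misses the diagonal.
\item Any optimizer is therefore $(\operatorname{id},\tau)_\#\mu_1+(\tau,\operatorname{id})_\#\mu_2$.
\item The marginal equations yield $\mu_1-\mu_2=\tau_\#(\mu_1-\mu_2)$, and then uniqueness of the solution of $\mu_1+\tau_\#\mu_1=\lam$.
\item Both conclusions follow because a finite signed measure $\sigma$ with $\sigma=\pm\tau_\#\sigma$ vanishes when $\tau$ shifts every point up by at least $q>0$. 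Prove this by induction over the strips $[jq,(j+1)q)$.
\end{itemize}
This argument also disposes of nonsymmetric maximizers, which the paper handles by symmetrizing first.
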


To prove Theorem \ref{thm:main}, we solve the copula maximization problem \eqref{defmaxprob} by translating it into an optimal transport problem and then applying standard tools from OT theory.
To briefly explain our ideas, let us recall the moment representation \(\varrho\) and \(\phi\) in \eqref{eq_rep_phi_rho}. 
Hence, the \(\varrho\)-maximal boundary of \(\Omega_{\varrho,\phi}\) corresponds to minimizing \(\E|U-V|^2\) under all random vectors \((U,V)\) with \(U,V\sim \cU(0,1)\) at a prescribed value \(\E|U-V|\).
For \(m\in[0,\tfrac12]\), this is the OT problem with a linear constraint
\begin{align}\label{eq:OT_primal}
    \mathsf{P}(m)\coloneqq\min_{\pi\in \Pi}\left\{ \int_{[0,1]^2} (b-a)^2 \de \pi(a,b)  \colon \int_{[0,1]^2} |b-a| \de \pi(a,b) = m\right\},
\end{align}
where \(\Pi\) denotes the set of couplings of two \(\cU(0,1)\) distributions, i.e., the set of distributions on \([0,1]^2\) with uniform marginals.
Recall that the distribution function associated with a coupling \(\pi\in \Pi\) is a (bivariate) copula.
Conversely, the probability measure induced by a bivariate copula is an element of \(\Pi\).
Hence, for \(m = \frac{1-x}{3}\), the OT problem \eqref{eq:OT_primal} is a reformulation of the copula problem \eqref{defmaxprob}.
To solve the \emph{primal problem} \eqref{eq:OT_primal}, we consider its dual formulation
\begin{align}\label{eq:OT_dual}
    \sup_{\varphi \oplus \psi \leq c_\theta} \int_0^1 \varphi(a) \de a + \int_0^1 \psi(b) \de b ,
\end{align}
where the optimization is over all continuous functions \(\varphi,\psi\colon [0,1]\to \R\) and constants \(\theta\in \R\) such that
\[
    \varphi\oplus\psi(a,b)\coloneqq\varphi(a)+\psi(b)
    \leq (b-a)^2-\theta\bigl(|b-a|-m\bigr)
    \eqqcolon c_\theta(a,b)
\]
for all \(a,b\in[0,1]\).
Kantorovich duality with linear constraints yields equality of the primal and dual values; see \cite[Theorem~2.1]{zaev2015monge}.

Since \(\varrho\) and \(\phi\) are symmetric, the transport cost \(c_\theta\) is symmetric.
Since, additionally, the optimization in \eqref{eq:OT_primal} is over distributions having identical marginals, the dual problem \eqref{eq:OT_dual} can be rewritten as
\begin{align}\label{eq:OT_dual2}
    \mathsf{D}(m)\coloneqq
    \sup_{\substack{f\in C([0,1]),\ \theta\in\R\\ f\oplus f\leq c_\theta}}
    2\int_0^1 f(u)\de u;
\end{align}
see Proposition~\ref{prop_KR_duality}.
Here \(C([0,1])\) denotes the space of continuous real-valued functions on \([0,1]\).
To determine the optimal value \(\mathsf{P}(m)\), we provide a feasible coupling \(\pi_x\) and a feasible potential \((f_x,\theta)\) such that \(\pi_x\) is concentrated on the contact set of \((f_x,\theta)\).
In this case, \((f_x,\theta)\) maximizes the dual problem and \(\pi_x\) minimizes the primal problem, so that \(\mathsf{P}(m)=\mathsf{D}(m)\); see Corollary \ref{cor_feas}.
Then the copula associated with \(\pi_x\) solves the maximization problem \eqref{defmaxprob}.

As a consequence of Theorem \ref{thm:main}, we can determine the exact region of Spearman's rho and Spearman's footrule as follows.

\begin{corollary}[The exact region $\Omega_{\varrho,\phi}$]\label{cor:main}
    For \(\brho\) in \eqref{eq:boundary} and \(\underline{\varrho}(x) = \tfrac{2\sqrt3}{9}(1+2x)^{3/2}-1\), we have
    \begin{align}\label{eq_exactregion}
	    \Omega_{\varrho,\phi} = \bigl\{(\varrho,x) : x\in[-\tfrac12,1],\ \underline{\varrho}(x)  \le \varrho\le \brho(x)\bigr\}.
	\end{align}
\end{corollary}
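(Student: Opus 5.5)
The statement to prove is Corollary~\ref{cor:main}, and the plan is to obtain it from Theorem~\ref{thm:main}, the lower bound in \eqref{eq:ksbound}, and convexity of $\Omega_{\varrho,\phi}$. Denote the right-hand side of \eqref{eq_exactregion} by $R$. We first show $\Omega_{\varrho,\phi}\subseteq R$. Take $C\in\CC$ and put $x=\phi(C)$. Since $\phi$ takes values in $[-\tfrac12,1]$, we have $x\in[-\tfrac12,1]$. The lower bound of \citet{kokolbukovsek2024exact} in \eqref{eq:ksbound} gives $\underline{\varrho}(x)\le\varrho(C)$. The definition \eqref{defmaxprob} gives $\varrho(C)\le\brho(x)$, and Theorem~\ref{thm:main} identifies $\brho(x)$ with the closed form \eqref{eq:boundary} for $x<1$ and with $1$ for $x=1$. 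Hence $(\varrho(C),x)\in R$.

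For the reverse inclusion, fix $x\in[-\tfrac12,1]$ and a value $\varrho$ with $\underline{\varrho}(x)\le\varrho\le\brho(x)$. Two copulas have footrule $x$ and attain the endpoints:
\begin{itemize}[nosep]
\item the Bertino copula $B_x$ from \eqref{def:lower-bertino}, with $\phi(B_x)=x$ and $\varrho(B_x)=\underline{\varrho}(x)$;
\item the copula $C_x$ from Theorem~\ref{thm:main} (or $C_1=\min\{u,v\}$ when $x=1$), with $\phi(C_x)=x$ and $\varrho(C_x)=\brho(x)$.
\end{itemize}
Both $\varrho$ and $\phi$ are affine in $C$ by \eqref{eq:defs}, and $\CC$ is convex. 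So for $t\in[0,1]$ the mixture $C^{(t)}:=tC_x+(1-t)B_x$ is a copula with
\[
\phi(C^{(t)})=x,\qquad \varrho(C^{(t)})=t\,\brho(x)+(1-t)\,\underline{\varrho}(x).
\]
As $t$ runs over $[0,1]$, the second expression takes every value in $[\underline{\varrho}(x),\brho(x)]$. Choosing $t$ so that it equals the given $\varrho$ shows $(\varrho,x)\in\Omega_{\varrho,\phi}$, which proves $R\subseteq\Omega_{\varrho,\phi}$.

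No step here is a real obstacle: the argument rests entirely on the attainment statements. These are the existence of $C_x$ with the properties in Theorem~\ref{thm:main} and the stated fact that $B_x$ attains $\underline{\varrho}$ for every $x$. The one point to check explicitly is the endpoint $x=1$. There $B_1$ and $C_1$ both equal $\min\{u,v\}$ and $\underline{\varrho}(1)=\brho(1)=1$, so the fibre of $R$ over $x=1$ is the single point $(1,1)$, consistent with $\Omega_{\varrho,\phi}$.
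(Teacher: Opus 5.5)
Your proposal is correct and follows the paper's proof essentially step for step. The inclusion \(\Omega_{\varrho,\phi}\subseteq R\) comes from the known sharp lower bound \(\underline{\varrho}\) together with Theorem~\ref{thm:main}. The reverse inclusion uses the mixture \(tC_x+(1-t)B_x\) and the affinity of \(\varrho\) and \(\phi\), and the endpoint \(x=1\) is treated separately, where everything collapses to \(M\).
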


Beyond completing the exact \((\varrho,\phi)\)-region, Theorem~\ref{thm:main} admits a direct probabilistic interpretation in terms of the first two moments of the absolute rank difference.
Before developing this interpretation, we briefly relate Theorem~\ref{thm:main} to the previously known bounds.

\begin{figure}[t!]
	\centering
	\includegraphics[width=0.49\textwidth]{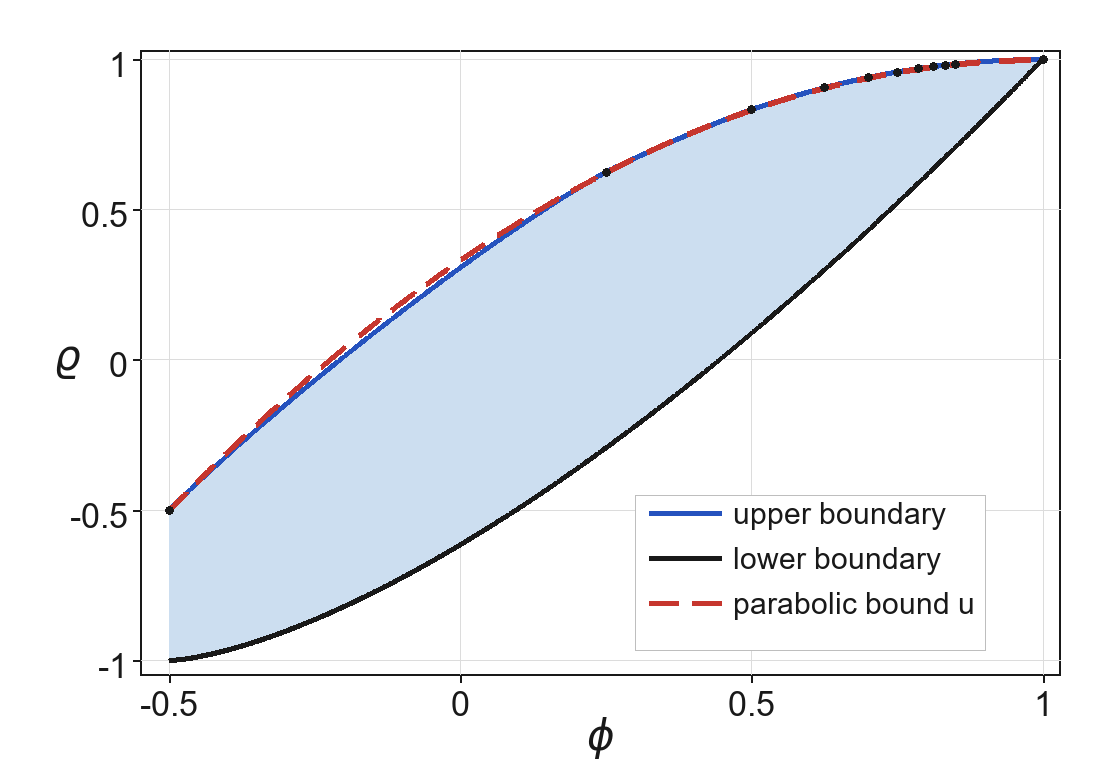}\hfill
	\includegraphics[width=0.49\textwidth]{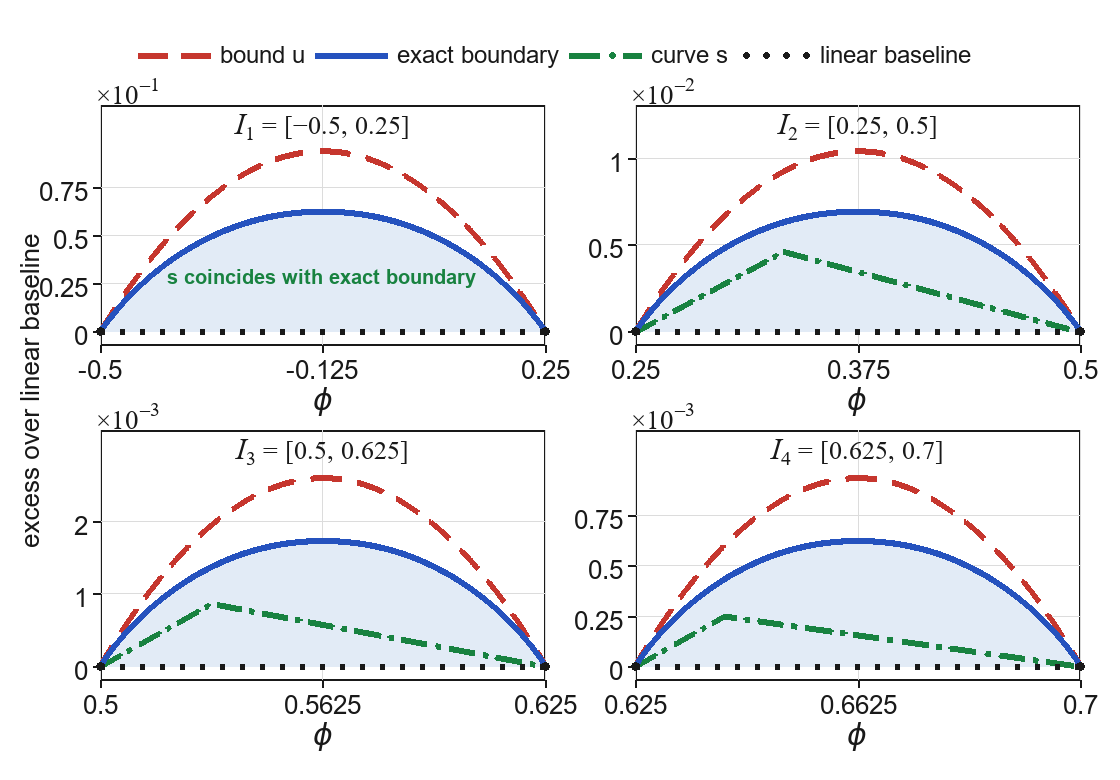}
	\caption{Left: the exact region \(\Omega_{\varrho,\phi}\) in \eqref{eq_exactregion}, bounded above by \(\brho\) from \eqref{eq:boundary} and below by the sharp boundary of \citet{kokolbukovsek2024exact}.
	The dashed curve is \(u\) from \eqref{eq:ksbound}, and the black dots mark its contact points with \(\brho\).
	Right: on the first four intervals \(I_1,\dots,I_4\) defined in \eqref{eq:intervals}, the vertical excesses of \(u\), \(\brho\), and the attainable curve \(s\) of \citet{tschimpke2025revisiting} are plotted relative to the piecewise-linear baseline obtained by joining consecutive black contact points in the left panel.
	The vertical scale is adapted to each interval.}
	\label{fig:region}
\end{figure}

\begin{remark}
    As motivated in the introduction, measures of association provide
information about the underlying dependence structure. In particular,
the extremal values \(\varrho(C)=\pm1\) determine the copula \(C\) as the upper/lower Fr\'{e}chet copula, respectively. The uniqueness statement in
Theorem~\ref{thm:main} shows that an analogous phenomenon occurs along
the upper boundary of the \((\varrho,\phi)\)-region: if
\(    \phi(C)=x\) and \(\varrho(C)=\overline{\varrho}(x),
\)
then we have
\(C=C_x.\)
Thus, every point on the upper boundary uniquely determines the
corresponding copula \(C_x\).
Conversely, if \((\varrho(C),\phi(C))\) is separated from the boundary
of \(\Omega_{\varrho,\phi}\), then \(C\) cannot be close to either
\(C_x\) or \(B_x\), where \(x=\phi(C)\), with respect to any metric
of weak convergence.
\end{remark}

\begin{remark}[Relation to earlier results]\label{rem:previous-curves}
As mentioned above, \citet{kokolbukovsek2024exact} determined the sharp lower boundary \(\underline{\varrho}\) exactly and proved the universal upper bound $u$ that coincides, due to \eqref{eq:boundary}, with \(\overline{\varrho}\) at the points $\{x_N\}_{N\geq 1}$ and at $1$.
They also constructed an attainable curve \(r\), which coincides on the interval \([-\tfrac 1 2, -\tfrac 1 8]\) with \(\overline{\varrho}\) and the function \(s\) below.
\citet{tschimpke2025revisiting} improved the attainable curve \(r\) by the function
\begin{align}\label{curve_s}
    s(x)=
	\begin{cases}
		2x+\tfrac12-\tfrac{\sqrt3}{9}(1+2x)^{3/2},
			&x\in[-\tfrac12,-\tfrac18),\\[2mm]
		x+\tfrac38-\tfrac{\sqrt6}{36}(1-4x)^{3/2},
			&x\in[-\tfrac18,\tfrac14),\\[2mm]
		u(x_N)+\dfrac{w_N-u(x_N)}{z_N-x_N}(x-x_N),
			&x\in[x_N,z_N),\quad N\ge2,\\[3mm]
		w_N+\dfrac{u(x_{N+1})-w_N}{x_{N+1}-z_N}(x-z_N),
			&x\in[z_N,x_{N+1}),\quad N\ge2,\\[3mm]
		1,	&x=1.
	\end{cases}
\end{align}
where \(z_N\coloneqq (2N^2+N-4)/(2(N+1)^2)\) and \(w_N\coloneqq (2N^5+6N^4+3N^3-7N^2-3N+1)/(2N^2(N+1)^3).\) They conjecture that \(s\) is not optimal at  any \(x\in(-\tfrac18,1)\setminus\{x_N:N\ge2\};\) see \cite[Paragraph after Thm.~5.3]{tschimpke2025revisiting}.
Theorem~\ref{thm:main} gives the precise resolution:
\[
	\brho(x)=s(x)
	\quad\text{for }x\in[-\tfrac12,\tfrac14]
		\cup\{x_N:N\ge2\}\cup\{1\},
	\qquad
	\brho(x)>s(x)
	\quad\text{for }x\in\bigcup_{N\ge2}(x_N,x_{N+1}).
\]
Consequently, the function \(s\) is also optimal on $(-\tfrac18,\tfrac14)$.
\end{remark}

\subsection{Discussion of Theorem \ref{thm:main}}\label{sec:int_main_thm}

For a discussion of the \(\brho\)-formula in \eqref{eq:boundary}, let us first recall that the universal upper bound \(u\) is a direct consequence of the Cauchy--Schwarz inequality.
Therefore, define the absolute difference
\begin{align}\label{def_Z}
    Z:= |U - V|,\quad \text{for } (U,V)\sim C.
\end{align}
Then, using the representation of \(\varrho\) and \(\phi\) in \eqref{eq_rep_phi_rho} and applying the Cauchy--Schwarz inequality, gives
\begin{align}\label{eq_CS}
   \frac{1-\varrho(C)}{6}
   = \E[Z^2]
   \geq \left(\E[Z]\right)^2
   = \left(\frac{1-\phi(C)}{3}\right)^2.
\end{align}
Solving the above inequality for \(\varrho(C)\) yields \(\varrho(C) \leq 1 - \frac 2 3 (1-\phi(C))^2\).
This implies, setting \(x = \phi(C)\), the expression
\begin{align}
    u(x) = 1 - \frac 2 3 (1-x)^2.
\end{align}
\begin{remark}
    The elementary Cauchy--Schwarz argument above, applied directly to the population versions of \(\varrho\) and \(\phi\), provides a considerably shorter proof of the upper bound \(u\) established in \cite[Theorem 11]{kokolbukovsek2024exact} and \cite[Theorem 3.4]{tschimpke2025revisiting}.
\end{remark}

The precise sense in which Theorem~\ref{thm:main} improves Cauchy--Schwarz is worth isolating.
For an arbitrary square-integrable nonnegative random variable \(\widetilde Z\) with mean \(m=\E\widetilde Z\), Cauchy--Schwarz inequality (equivalently, nonnegativity of the variance) gives only
\[
    \E\widetilde Z^2\geq m^2.
\]
This inequality cannot be improved from the value of \(m\) and the boundedness condition \(0\leq\widetilde Z\leq1\) alone, since the constant random variable \(\widetilde Z=m\) attains equality.
Our variable \(Z\) from \eqref{def_Z} additionally satisfies \(U,V\sim\cU(0,1)\).
Thus \(Z\) is not an arbitrary bounded random variable: it is the absolute difference induced by a coupling with two prescribed uniform marginals.
No independence, symmetry, or particular form of dependence between \(U\) and \(V\) is assumed.
It is precisely this uniform-marginal, or bistochastic, constraint that yields the improvement.
The following result gives the sharp strengthening of Cauchy--Schwarz.

\begin{proposition}[Improvement of Cauchy--Schwarz inequality under uniform marginals]
\label{prop:sharp-CS}~\\
For \(U,V\sim\cU(0,1)\), set \(m\coloneqq\E|U-V|\).
Then the following hold true:
\begin{enumerate}[label = (\roman*)]
    \item For \(m>0\), choose \(N\geq1\) such that
    \(
        m\in \left(\frac1{2N+2},\frac1{2N}\right].
    \)
    Let \(\ell\in\{\frac1{2N+2},\frac1{2N}\}\) be an endpoint nearest to \(m\), put \(\Delta_m:=|m-\ell|\), and define
    \begin{align}\label{def_Vmin}
        V_{\operatorname{min}}(m)
        \coloneqq
        \frac{2\Delta_m^{3/2}}{3\sqrt{N(N+1)}}-\Delta_m^2, \qquad \text{and} \quad V_{\operatorname{min}}(0):= 0.
    \end{align}
    Then
    \begin{equation}\label{eq:interpretation-sharp-cs}
        \E (U-V)^2
        \geq
        m^2+V_{\operatorname{min}}(m).
    \end{equation}

    \item The bound in \eqref{eq:interpretation-sharp-cs} is sharp for every \(m\).
    For \(m>0\), it is attained by the copula \(C_x\) from Definition~\ref{def_Cx} with \(x=1-3m\), and for \(m=0\) by the comonotonicity copula \(M(u,v)=\min\{u,v\}\).

    \item The correction term satisfies \(V_{\operatorname{min}}(m)\geq 0\) and
    \begin{align}\label{def_centersetC}
        V_{\operatorname{min}}(m)=0
        \quad\Longleftrightarrow\quad
        m\in
        \mathfrak C
        \coloneqq
        \{0\}\cup
        \left\{\frac{1}{2N}\colon N\in\N\right\}.
    \end{align}
\end{enumerate}
\end{proposition}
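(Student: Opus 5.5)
The plan is to deduce the proposition directly from Theorem~\ref{thm:main} and the moment representation \eqref{eq_rep_phi_rho} (Lemma~\ref{lem:moments}), by translating the \(\brho\)-formula into the language of \(\E(U-V)^2\) and \(m=\E|U-V|\).

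\textbf{Parts (i) and (ii).} Let \(C\) be the copula of \((U,V)\) and put \(x:=\phi(C)=1-3m\). Then \(\varrho(C)=1-6\E(U-V)^2\), and the definition of \(\brho\) in \eqref{defmaxprob} gives \(\varrho(C)\le\brho(x)\). Hence
\[
\E(U-V)^2\;\ge\;\frac{1-\brho(x)}{6}.
\]
For \(m>0\) we have \(x<1\). The interval conventions differ slightly: here \(m\in(\tfrac1{2N+2},\tfrac1{2N}]\), whereas \(x\in I_N\) corresponds to \(m\in(\tfrac1{2N+2},\tfrac1{2N}]\) as well. Indeed, \(x_N\le x<x_{N+1}\) is the same as \(\tfrac1{2N+2}<m\le\tfrac1{2N}\), so the two conventions agree.

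Inserting \eqref{eq:boundary} and using \(u(x)=1-6m^2\) (because \(1-x=3m\)), we get
\[
\frac{1-\brho(x)}6=m^2+\frac{2\Delta^{3/2}}{3\sqrt{N(N+1)}}-\Delta^2=m^2+V_{\min}(m).
\]
This proves \eqref{eq:interpretation-sharp-cs}. The alternative form \(1-6\ell(2m-\ell)\) is consistent with this: \(\ell(2m-\ell)=m^2-\Delta^2\).

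For sharpness, Theorem~\ref{thm:main} states that \(C_x\) satisfies \(\phi(C_x)=x\) and \(\varrho(C_x)=\brho(x)\). Its moments therefore give equality, and the mean of \(|U-V|\) under \(C_x\) is \(m\). For \(m=0\), the bound reads \(\E(U-V)^2\ge0\), and \(M\) attains it since \(U=V\) almost surely.

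\textbf{Part (iii).} Nonnegativity follows from (i) and (ii): \(V_{\min}(m)=\E_{C_x}(U-V)^2-m^2=\Var_{C_x}|U-V|\ge0\). For a direct check, note that \(V_{\min}(m)\ge0\) is equivalent to \(\Delta^{1/2}\le\tfrac{2}{3\sqrt{N(N+1)}}\). Since \(\Delta\le\tfrac12\bigl(\tfrac1{2N}-\tfrac1{2N+2}\bigr)=\tfrac1{4N(N+1)}\), we get \(\Delta^{1/2}\le\tfrac1{2\sqrt{N(N+1)}}\), which is strictly smaller than \(\tfrac2{3\sqrt{N(N+1)}}\).

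This same strict inequality gives the characterization of zeros: for \(m>0\), \(V_{\min}(m)=0\) if and only if \(\Delta=0\), i.e.\ \(m\) is an endpoint \(\tfrac1{2N}\) or \(\tfrac1{2N+2}\), both of which lie in \(\mathfrak C\). Together with \(V_{\min}(0)=0\), this yields \eqref{def_centersetC}.

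The only real obstacle is bookkeeping: matching the half-open interval conventions for \(m\) and \(x\), and making sure the endpoint \(m=\tfrac1{2N}\) is treated consistently. At that endpoint \(\Delta=0\) with \(\ell=\tfrac1{2N}\), and the formula there coincides with the value obtained from the adjacent interval.
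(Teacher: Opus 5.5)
Your proposal is correct and matches the paper's proof essentially step for step. You obtain \(\min\E(U-V)^2=m^2+V_{\operatorname{min}}(m)\) from Theorem~\ref{thm:main} via Lemma~\ref{lem:moments}, using \(\ell(2m-\ell)=m^2-\Delta_m^2\) or equivalently \(u(x)=1-6m^2\); you take \(C_x\) and \(M\) for sharpness; and you characterize the zeros through \(\Delta_m\le\frac1{4N(N+1)}\), exactly as the paper does (one cosmetic point: since \(m\in(\frac1{2N+2},\frac1{2N}]\), \(\Delta_m=0\) forces \(m=\frac1{2N}\), so mentioning \(\frac1{2N+2}\) is unnecessary but harmless).
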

As a consequence of the above result, the ordinary Cauchy--Schwarz inequality
\begin{align}\label{eq:CSimp}
    \E(U-V)^2\geq \bigl(\E|U-V|\bigr)^2
\end{align}
is strict whenever \(m=\E|U-V|\notin\mathfrak C\).
Equality in \eqref{eq:CSimp} is equivalent to the constant-displacement condition
\begin{align}\label{eq:Z_const}
    |U-V|=m \quad \text{almost surely}.
\end{align}
For an illustration of the nontrivial equality cases in \eqref{eq:CSimp}, we refer to Figure \ref{fig:copulas}, where \(m=\tfrac14\) and \(m=\tfrac16\) occur as endpoint cases of the copula family constructed in Section~\ref{sec:attain}.
In terms of the concordance coefficients \(\varrho\) and \(\phi\), the values in Proposition~\ref{prop:sharp-CS} are precisely the points at which the ordinary Cauchy--Schwarz bound \(u\) touches the sharp boundary \(\brho\).
We return to the constant-displacement condition \eqref{eq:Z_const} from the perspective of generalized mixability in Section~\ref{sec:mixability}.

Combining the upper and lower boundaries of the exact \((\varrho,\phi)\)-region, the following results determines the possible variances of \(|U-V|\) when its mean is prescribed but the dependence structure of \(U,V\sim \cU(0,1)\) is unspecified.
Recall that \(C_x\) in \eqref{def_C_x} is \(\varrho\)-maximal under the constraint \(\phi(C_x)=x\), whereas the Bertino copula \(B_x\) in \eqref{def:lower-bertino} is \(\varrho\)-minimal under the same constraint.

\begin{corollary}[Exact mean--variance region for \(|U-V|\)]
\label{cor:rank-gap-variance}
For \(U,V\sim\cU(0,1)\) and \(m=\E|U-V|\), let \(V_{\operatorname{min}}(m)\) be given by \eqref{def_Vmin} and define
\begin{align}\label{def_Vmax}
    V_{\operatorname{max}}(m)
    \coloneqq
    \frac{1-(1-2m)^{3/2}}{3}-m^2.
\end{align}
Then
\begin{align}\label{eq_cor:rank-gap-variance2}
    V_{\operatorname{min}}(m)
    \leq
    \Var(|U-V|)
    \leq
    V_{\operatorname{max}}(m).
\end{align}
The lower bound is attained for \((U,V)\sim C_x\), \(x=1-3m\), and the upper bound is attained for \((U,V)\sim B_x\).
\end{corollary}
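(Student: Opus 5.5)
The plan is to translate the bounds on \(\varrho\) along the boundary of \(\Omega_{\varrho,\phi}\) into bounds on \(\Var(|U-V|)\), using the moment representation \eqref{eq_rep_phi_rho}. Let \((U,V)\sim C\), \(Z=|U-V|\) and \(x=\phi(C)\). Then \(m=\E Z=\frac{1-x}{3}\), so \(x=1-3m\). Also \(\E Z^2=\frac{1-\varrho(C)}{6}\), which gives
\[
\Var(Z)=\frac{1-\varrho(C)}{6}-m^2 .
\]
This is a strictly decreasing affine function of \(\varrho(C)\). Hence, for \(m\) fixed (equivalently \(x\) fixed), the minimal variance corresponds to \(\brho(x)\) and the maximal variance to \(\underline{\varrho}(x)\). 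Corollary~\ref{cor:main} states that \(\underline{\varrho}(x)\le\varrho(C)\le\brho(x)\) for every copula with \(\phi(C)=x\).

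For the lower bound I will invoke Proposition~\ref{prop:sharp-CS}(i), which gives \(\E Z^2\ge m^2+V_{\operatorname{min}}(m)\). This is equivalent to \(\Var(Z)\ge V_{\operatorname{min}}(m)\); the case \(m=0\) is trivial.

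It is still worth checking consistency with \eqref{eq:boundary}. Substituting gives
\[
\frac{1-\brho(x)}{6}-m^2=\ell(2m-\ell)-m^2+\frac{2\Delta^{3/2}}{3\sqrt{N(N+1)}}.
\]
Since \(\ell(2m-\ell)-m^2=-(m-\ell)^2=-\Delta^2\), this equals \(V_{\operatorname{min}}(m)\). One small point needs care: the interval conventions differ. In Theorem~\ref{thm:main}, \(x\in I_N\) corresponds to \(m\in(\frac1{2N+2},\frac1{2N}]\); I will check that it matches the convention in Proposition~\ref{prop:sharp-CS}. Attainment by \(C_x\) then follows from Theorem~\ref{thm:main} together with Proposition~\ref{prop:sharp-CS}(ii).

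For the upper bound I will use the sharp lower boundary \(\underline{\varrho}(x)=\frac{2\sqrt3}{9}(1+2x)^{3/2}-1\) from \eqref{eq:ksbound}. With \(x=1-3m\) we have \(1+2x=3(1-2m)\), so \((1+2x)^{3/2}=3\sqrt3(1-2m)^{3/2}\). Therefore \(\underline{\varrho}(x)=2(1-2m)^{3/2}-1\), and
\[
\frac{1-\underline{\varrho}(x)}{6}-m^2=\frac{1-(1-2m)^{3/2}}{3}-m^2=V_{\operatorname{max}}(m).
\]
Attainment follows because \(B_x\) attains \(\underline{\varrho}\), as stated after \eqref{eq:ksbound}.

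The only step needing attention is the bookkeeping: the ranges \(m\in[0,\frac12]\leftrightarrow x\in[-\frac12,1]\), the endpoint \(m=0\) (where \(C_1=M\) and both bounds vanish), and the half-open interval conventions. Apart from that the proof is algebraic, since the substantive content is already contained in Theorem~\ref{thm:main} and the Kokol Bukovšek et al.\ lower bound.
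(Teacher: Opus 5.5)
Your proposal is correct and matches the paper's proof. Like the paper, you write \(\Var(|U-V|)=\frac{1-\varrho(C)}{6}-m^2\) via Lemma~\ref{lem:moments}, substitute \(\brho(1-3m)\) and \(\underline{\varrho}(1-3m)=2(1-2m)^{3/2}-1\), and take attainment from \(C_x\) and \(B_x\). Routing the lower bound through Proposition~\ref{prop:sharp-CS}(i) is only a cosmetic difference, since that proposition is itself proved by the same substitution into \eqref{eq:boundary} that you carry out as a consistency check.
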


\begin{remark}
\label{rem:mean-variance-consequences}
\begin{enumerate}[label = (\alph*)]
    \item Every value between the two bounds in \eqref{eq_cor:rank-gap-variance2} is attainable.
    Indeed, both \(C_x\) and \(B_x\) satisfy \(\phi(C_x) = \phi(B_x)=x=1-3m\), and hence have the same mean absolute difference \(m\).
    Since both \(\phi\) and \(\varrho\) are affine in the copula, convex mixtures of \(C_x\) and \(B_x\) attain every intermediate variance.
    For \(m=0\), both bounds are zero, that is \(V_{\operatorname{min}}(0)=V_{\operatorname{max}}(0)=0\).

    \item Corollary~\ref{cor:rank-gap-variance} provides an equivalent mean--variance interpretation of Theorem~\ref{thm:main}: for every prescribed mean \(m=\E|U-V|\), it determines exactly the smallest and largest possible variance of the absolute difference \(|U-V|\) among all couplings of two uniform random variables.
    In particular, the lower boundary \(V_{\operatorname{min}}(m)\) is precisely the sharp Cauchy--Schwarz correction from Proposition~\ref{prop:sharp-CS}.

    \item If the mean \(m\) is not prescribed, optimizing the two bounds in \eqref{eq_cor:rank-gap-variance2} over \(m\in[0,\tfrac12]\) yields the universal variance bounds
    \begin{align}\label{eq_cor:rank-gap-variance}
        0
        \leq
        \Var(|U-V|)
        \leq
        \frac{5(3-\sqrt5)}{24}.
    \end{align}
    The lower bound is attained whenever \(|U-V|\) is vanishes almost surely, i.e., \(U=V\) almost surely.
    The upper bound is attained for \((U,V)\sim B_{x_0}\), where \(x_0\coloneqq\frac{7-3\sqrt5}{4}\).
\end{enumerate}
\end{remark}

\subsection{Organization of the paper}

The rest of the paper is organized as follows.
Section~\ref{sec:appl_main_thm} develops consequences and applications of
Theorem~\ref{thm:main} to finite rankings, generalized mixability, and the
relation between Chatterjee's rank correlation and the copula correlation ratio.
Section~\ref{sec:prelim} collects notation and reviews the optimal transport
concepts used throughout the paper, in particular Kantorovich duality with
linear constraints.
Section~\ref{sec:attain} constructs the candidate copula \(C_x\) in
\eqref{def_C_x}, verifies the moment constraint of the associated coupling,
and calculates its rank correlations in Proposition~\ref{prop_C_x}.
Section~\ref{sec:dual} introduces the dual potential in
Definition~\ref{def:dual-potential}, proves equality on the support of the
candidate coupling in Lemma~\ref{lem:equality-on-support}, establishes global
dual feasibility in Lemma~\ref{lem:global-dual-feasibility}, and proves
optimality and uniqueness in Proposition~\ref{prop:rho-phi-Cx}.
Finally, the proofs of the results stated in Sections~\ref{sec:intro} and
\ref{sec:appl_main_thm} are collected in Sections~\ref{sec:proof1} and
\ref{sec:proof2}, respectively.

\section{Consequences and applications}\label{sec:appl_main_thm}

The closed-form expression of \(\brho\) in Theorem~\ref{thm:main} leads to several consequences beyond the comparison of Spearman's rho and Spearman's footrule.
We first translate the exact mean--variance region in \eqref{eq_cor:rank-gap-variance2} into inequalities relating absolute and quadratic errors of finite rankings.
We then interpret the equality cases of the sharp Cauchy--Schwarz inequality under uniform marginals in terms of generalized mixability and quantify the minimal dispersion when exact mixability is impossible.
Finally, we apply the exact \((\varrho,\phi)\)-region to the comparison of Chatterjee's rank correlation with the copula correlation ratio, two recently studied measures of directed dependence.

\subsection{Inequalities for finite rankings}\label{sec_21}

We first translate the variance bounds in Corollary~\ref{cor:rank-gap-variance} into inequalities for two familiar distances between finite rankings.
Let \(\mathfrak S_n\) denote the set of permutations on \(\{1,\ldots,n\}\) and let \(\pi\in\mathfrak S_n\).
We compare the ranking \(\pi\) with the identity ranking.
Two popular unnormalized distances are
\begin{equation}\label{eq:finite-rank-distances}
    D_\pi\coloneqq\sum_{i=1}^n|i-\pi(i)|
    \qquad\text{and}\qquad
    S_\pi\coloneqq\sum_{i=1}^n(i-\pi(i))^2,
\end{equation}
which measure absolute and quadratic rank error, respectively; see, e.g., \cite{clemencon2013ranking,diaconis1977spearman}.
In particular, \(|i-\pi(i)|\) is the absolute \emph{rank displacement} of item \(i\).

To apply the copula bounds, we embed the finite permutation into a coupling with continuous uniform marginals.
Let \(I\) be uniformly distributed on \(\{1,\ldots,n\}\), let \(W\sim\cU(0,1)\) be independent of \(I\), and define
\begin{align}\label{eq:permutation-coupling}
    U_\pi
    \coloneqq
    \frac{I-1+W}{n},
    \qquad
    V_\pi
    \coloneqq
    \frac{\pi(I)-1+W}{n}.
\end{align}
Then \(U_\pi,V_\pi\sim\cU(0,1)\).
Indeed, conditionally on \(I=i\), the variable \(U_\pi\) is uniform on the \(i\)-th interval of the regular partition of \([0,1]\), whereas \(V_\pi\) is uniform on the \(\pi(i)\)-th interval.
Since \(\pi\) is a permutation, averaging over \(I\) yields uniform marginals.
Moreover,
\begin{align*}
    U_\pi-V_\pi
    =
    \frac{I-\pi(I)}{n},
\end{align*}
so the coupling preserves the normalized rank displacement exactly.
The corresponding copula is a shuffle of \(M\).

Consequently, the mean absolute and mean squared normalized rank displacements are
\begin{equation}\label{eq:permutation-costs}
    \begin{aligned}
        m_\pi
        &\coloneqq
        \E|U_\pi-V_\pi|
        =
        \frac{D_\pi}{n^2}, \qquad 
        q_\pi
        \coloneqq
        \E(U_\pi-V_\pi)^2
        =
        \frac{S_\pi}{n^3}.
    \end{aligned}
\end{equation}
Here one factor \(1/n\) comes from averaging over the \(n\) items, while normalizing the rank displacement by \(n\) contributes one further factor to \(m_\pi\) and two further factors to \(q_\pi\).
The next result is a discretized version of Corollary~\ref{cor:rank-gap-variance}.

\begin{theorem}[Finite-permutation inequalities]
\label{cor:permutation-costs}
Every \(\pi\in\mathfrak S_n\) satisfies
\begin{equation}\label{eq:permutation-envelope}
    m_\pi^2+V_{\operatorname{min}}(m_\pi)
    \leq
    q_\pi
    \leq
    m_\pi^2+V_{\operatorname{max}}(m_\pi)
    =
    \frac{1-(1-2m_\pi)^{3/2}}{3},
\end{equation}
where \(V_{\operatorname{min}}\) and \(V_{\operatorname{max}}\) are given in \eqref{def_Vmin} and \eqref{def_Vmax}, respectively.
\end{theorem}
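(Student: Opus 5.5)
This is a direct corollary of Corollary~\ref{cor:rank-gap-variance}, applied to the coupling $(U_\pi,V_\pi)$ from \eqref{eq:permutation-coupling}. The plan has three steps.

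First, I will check that $(U_\pi,V_\pi)$ is an admissible pair, i.e.\ $U_\pi,V_\pi\sim\cU(0,1)$. Take $t\in[0,1]$. By independence of $I$ and $W$,
\[
P(V_\pi\le t)=\frac1n\sum_{i=1}^n P\Bigl(\tfrac{\pi(i)-1+W}{n}\le t\Bigr)=\frac1n\sum_{j=1}^n P\Bigl(\tfrac{j-1+W}{n}\le t\Bigr)=t,
\]
where the middle equality is the reindexing $j=\pi(i)$, valid because $\pi$ is a bijection. The same computation with $\pi=\mathrm{id}$ gives $U_\pi\sim\cU(0,1)$. The joint law of $(U_\pi,V_\pi)$ is therefore a coupling in $\Pi$, and its distribution function is a copula; explicitly it is the shuffle of $M$ associated with $\pi$.

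Second, I will verify the moment identities \eqref{eq:permutation-costs}. Since $U_\pi-V_\pi=(I-\pi(I))/n$ does not depend on $W$, we get
\[
\E|U_\pi-V_\pi|=\frac1n\sum_i\frac{|i-\pi(i)|}{n}=\frac{D_\pi}{n^2},\qquad \E(U_\pi-V_\pi)^2=\frac{S_\pi}{n^3}.
\]
Setting $m:=m_\pi\in[0,\tfrac12]$, this gives $\E|U_\pi-V_\pi|^2=q_\pi$ and $\Var(|U_\pi-V_\pi|)=q_\pi-m_\pi^2$.

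Third, I will apply \eqref{eq_cor:rank-gap-variance2} with $m=m_\pi$:
\[
V_{\operatorname{min}}(m_\pi)\le q_\pi-m_\pi^2\le V_{\operatorname{max}}(m_\pi).
\]
Adding $m_\pi^2$ gives \eqref{eq:permutation-envelope}. The final equality in \eqref{eq:permutation-envelope} follows from \eqref{def_Vmax}, since the $-m^2$ there cancels the added $m_\pi^2$.

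The only point that needs attention is the degenerate case $m_\pi=0$, where $V_{\operatorname{min}}(0)=0$ is set by definition. There $\pi=\mathrm{id}$ and $q_\pi=0$, so both sides equal $0$. This case is in any event covered by Corollary~\ref{cor:rank-gap-variance}.
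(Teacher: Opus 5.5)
Your proposal is correct and follows the paper's proof: you realize $\pi$ as a shuffle of $M$ with uniform marginals whose displacement is constant on each rank cell, identify the two moments with $m_\pi$ and $q_\pi$, and apply Corollary~\ref{cor:rank-gap-variance} to $\Var(|U-V|)=q_\pi-m_\pi^2$. The only difference is cosmetic: you use the randomized construction \eqref{eq:permutation-coupling}, whereas the paper uses the measure-preserving map $Q_\pi$, and both give the same coupling.
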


\begin{remark}
    The lower bound in \eqref{eq:permutation-envelope} improves the Cauchy--Schwarz inequality \(S_\pi\geq D_\pi^2/n\) to
    \begin{equation}\label{eq:improved-cauchy-schwarz}
        S_\pi
        \geq
        \frac{D_\pi^2}{n}
        +
        n^3V_{\operatorname{min}}(m_\pi).
    \end{equation}
    The correction is strictly positive whenever \(m_\pi\notin\mathfrak C\).
    For \(m_\pi=1/(2N)\), equality in the ordinary Cauchy--Schwarz inequality is attainable by a permutation of size \(n\) if and only if \(n/(2N)\) is an integer.
    Equivalently, there exists a permutation \(\pi\in\mathfrak S_n\) satisfying
\[
    |i-\pi(i)|=\frac{n}{2N},
    \qquad i=1,\ldots,n.
\]
Consequently, the bounds in \eqref{eq:permutation-envelope} are sharp for the continuum problem of copulas and asymptotically sharp for finite rankings, but need not be optimal for each fixed \(n\).
\end{remark}

\subsection{Generalized mixability and minimal dispersion}\label{sec:mixability}

Classical complete mixability asks whether random variables with prescribed marginal distributions can be coupled such that their sum is constant almost surely; see, e.g., \cite{wang2015current}.
For integrable marginal distributions, the corresponding center is necessarily unique, since
\begin{align}\label{eq_mixability}
    X_1+\cdots+X_n=c \quad \text{almost surely}
\end{align}
implies \(c=\sum_{i=1}^n \E X_i.\) Bignozzi and Puccetti \cite{bignozzi2015studying} introduced the more general notions of \(\Psi\)-complete and \(\Psi\)-joint mixability, replacing the sum in \eqref{eq_mixability} by a measurable aggregation function \(\Psi\colon\R^n\to\R\).
In particular, a distribution \(F\) is \(\Psi\)-completely mixable with index \(n\) and center \(c\) if there exist \(X_1,\ldots,X_n\sim F\) such that
\begin{align*}
    \Psi(X_1,\ldots,X_n)=c
    \quad\text{almost surely}.
\end{align*}
Unlike for the classical sum, the center of a nonlinear aggregation function need not be determined by the marginal means and, in particular, several centers may be possible.

A particularly important class considered in \cite{bignozzi2015studying} is given by supermodular aggregation functions.
Recall that a function \(\Psi\colon\R^n\to\R\) is called \emph{supermodular} if
\begin{align*}
    \Psi(x)+\Psi(y)
    \leq
    \Psi(x\wedge y)+\Psi(x\vee y),
    \qquad x,y\in\R^n,
\end{align*}
where \(x\wedge y\) and \(x\vee y\) denote the componentwise minimum and maximum, respectively.
For twice continuously differentiable functions, supermodularity is equivalent to \(\partial_{ij}\Psi\geq0\) for all \(i\neq j\).

Our main result, Theorem \ref{thm:main}, admits a natural interpretation in this framework.
To make the connection to supermodular aggregation functions explicit, let \(U':=U-\frac12\) and \(V':=\frac12-V.\) Then \(U',V'\) are uniform on \((-1/2,1/2)\) and
\begin{align}\label{eq:centered-distance}
    |U-V|=|U'+V'|.
\end{align}
Consequently, for the supermodular function \(\Psi(u,v):=|u+v|\), the constant-distance condition \(|U-V|=m\) almost surely is equivalent to \(\Psi(U',V')=m\) almost surely.
Thus the constant-distance problem can be viewed as a \(\Psi\)-complete mixability problem for the centered uniform distribution.

The equality cases of the Cauchy--Schwarz bound that are attainable under the uniform-marginal constraint admit the following reformulation in terms of generalized complete mixability.
Recall the set \(\mathfrak{C}\) in \eqref{def_centersetC}.

\begin{lemma}[\(\Psi\)-complete mixability of the centered uniform distribution]
\label{lem:psi-mixability-centers}
Let \(\Psi(u,v)=|u+v|\).
There exist \(U',V'\sim\cU(-\tfrac12,\tfrac12)\) such that \(\Psi(U',V')=m\) almost surely if and only if \(m\in \mathfrak C\).
\end{lemma}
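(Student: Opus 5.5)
The plan is to reduce the statement to Proposition~\ref{prop:sharp-CS} through the change of variables in \eqref{eq:centered-distance}. Given \(U',V'\sim\cU(-\tfrac12,\tfrac12)\), set \(U:=U'+\tfrac12\) and \(V:=\tfrac12-V'\). Then \(U,V\sim\cU(0,1)\) and \(|U-V|=|U'+V'|=\Psi(U',V')\). This map is a bijection between couplings, so the question is equivalent to asking for which \(m\) there exist \(U,V\sim\cU(0,1)\) with \(|U-V|=m\) almost surely.

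For necessity, suppose \(|U-V|=m\) almost surely. Then \(\E|U-V|=m\) and \(\E(U-V)^2=m^2\). Inequality \eqref{eq:interpretation-sharp-cs} of Proposition~\ref{prop:sharp-CS}(i) gives \(m^2\ge m^2+V_{\operatorname{min}}(m)\), hence \(V_{\operatorname{min}}(m)\le 0\). Part (iii) of the same proposition gives \(V_{\operatorname{min}}(m)\ge0\), with equality if and only if \(m\in\mathfrak C\). Therefore \(m\in\mathfrak C\). The range \(m\in[0,\tfrac12]\) is automatic, since \(\E|U-V|\le\tfrac12\) for uniform marginals; this also follows from \(\phi\ge-\tfrac12\) and \eqref{eq_rep_phi_rho}.

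For sufficiency, take \(m\in\mathfrak C\). By Proposition~\ref{prop:sharp-CS}(ii), the bound is attained: by \(M\) if \(m=0\), and by \(C_x\) with \(x=1-3m\) otherwise. This yields a coupling with \(\E|U-V|=m\) and \(\E(U-V)^2=m^2+V_{\operatorname{min}}(m)=m^2\). Hence \(\Var(|U-V|)=0\), so \(|U-V|=m\) almost surely, and transforming back gives \(\Psi(U',V')=m\).

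If one prefers not to rely on the construction of \(C_x\), an explicit coupling works for \(m=\tfrac1{2N}\). Partition \([0,1]\) into \(2N\) blocks of length \(m\) and pair them as \((1,2),(3,4),\dots\). On each pair, map \(u\mapsto u+m\) from the first block to the second and \(u\mapsto u-m\) back. This is a measure-preserving involution, so \(V\sim\cU(0,1)\) and \(|U-V|=m\) holds identically.

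There is no real obstacle here, since all the weight rests on Proposition~\ref{prop:sharp-CS}, and ultimately on Theorem~\ref{thm:main}. The one point to handle carefully is the necessity direction: it needs the strict positivity of \(V_{\operatorname{min}}\) off \(\mathfrak C\), which comes from \(0<\Delta_m\le \tfrac12\bigl(\tfrac1{2N}-\tfrac1{2N+2}\bigr)\). On that range \(\tfrac{2\Delta^{3/2}}{3\sqrt{N(N+1)}}>\Delta^2\), which is equivalent to \(\Delta^{1/2}<\tfrac{2}{3\sqrt{N(N+1)}}\), and this holds because \(\Delta\le\tfrac1{4N(N+1)}\).
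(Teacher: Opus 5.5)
Your proposal is correct and follows essentially the same route as the paper. The substitution \(U=U'+\tfrac12\), \(V=\tfrac12-V'\) reduces the claim to the existence of a uniform coupling with \(|U-V|=m\) almost surely, i.e.\ equality in the Cauchy--Schwarz inequality \eqref{eq:CSimp}, which Proposition~\ref{prop:sharp-CS} characterizes exactly by \(m\in\mathfrak C\). You spell out both directions, which the paper compresses into one line, and add an optional explicit shuffle coupling for sufficiency.
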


\noindent
Hence, the distribution \(\cU(-\tfrac12,\tfrac12)\) is \(\Psi\)-completely mixable with index \(2\) precisely for the countable set of centers \(\mathfrak C\).

Indeed, generalized mixability asks whether the aggregate \(\Psi(U',V')\) can be made constant.
If exact constancy is impossible, it is natural to ask how close one can get to it while prescribing its mean.
We therefore define the minimal variance
\begin{align}\label{eq:def-minimal-mixability-dispersion}
    \mathsf V(m)
    :=
    \min\left\{
        \Var\bigl(\Psi(U',V')\bigr):
        U',V'\sim\cU\left(-\frac12,\frac12\right),\
        \E\Psi(U',V')=m
    \right\}, \qquad m\in \left[0,\frac 1 2\right].
\end{align}
Exact \(\Psi\)-complete mixability with center \(m\) is equivalent to \(\mathsf V(m)=0\).
If \(\mathsf V(m)>0\), then exact mixability with center \(m\) is impossible, and \(\mathsf V(m)\) quantifies the smallest quadratic dispersion that an aggregate with prescribed mean \(m\) can attain.
In this sense, \(\mathsf V(m)\) provides a quantitative relaxation of exact \(\Psi\)-complete mixability.

As a consequence of Proposition~\ref{prop:sharp-CS}, we determine the minimal dispersion exactly as follows.

\begin{theorem}[Minimal dispersion and generalized mixability]
\label{cor:distance-mixability}
For \(\Psi(u,v)=|u+v|\), we have
\begin{align*}
    \mathsf V(m)=V_{\operatorname{min}}(m),
\end{align*}
where \(V_{\operatorname{min}}(m)\) is given by \eqref{def_Vmin}.
In particular, \(\mathsf V(m)=0\) if and only if \(m\in\mathfrak C\).
\end{theorem}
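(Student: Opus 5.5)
The plan is to reduce Theorem~\ref{cor:distance-mixability} to Proposition~\ref{prop:sharp-CS} through the change of variables \eqref{eq:centered-distance}.

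First, we establish a bijection between the two feasible sets. Given \(U',V'\sim\cU(-\tfrac12,\tfrac12)\), we set \(U\coloneqq U'+\tfrac12\) and \(V\coloneqq \tfrac12-V'\). Both are \(\cU(0,1)\), since \(\cU(-\tfrac12,\tfrac12)\) is symmetric about \(0\), and they satisfy \(|U-V|=|U'+V'|=\Psi(U',V')\). Conversely, every coupling \((U,V)\) of two \(\cU(0,1)\) variables arises this way from \(U'\coloneqq U-\tfrac12\) and \(V'\coloneqq\tfrac12-V\). Hence
\[
\mathsf V(m)=\min\bigl\{\Var(|U-V|): U,V\sim\cU(0,1),\ \E|U-V|=m\bigr\}.
\]
Every coupling satisfies \(\E|U-V|\le \tfrac12\), with equality for the countermonotonic coupling. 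The feasible set is nonempty for each \(m\in[0,\tfrac12]\), for instance by mixing \(M\) and \(W\). It is also weakly compact, so the minimum is attained; alternatively, attainment will follow from part (ii) of Proposition~\ref{prop:sharp-CS}.

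Second, we write \(\Var(|U-V|)=\E(U-V)^2-m^2\). By Proposition~\ref{prop:sharp-CS}(i), every feasible coupling satisfies \(\Var(|U-V|)\ge V_{\operatorname{min}}(m)\). We should check that the case distinction there covers all \(m\). The intervals \((\tfrac1{2N+2},\tfrac1{2N}]\) for \(N\ge1\) cover \((0,\tfrac12]\), and \(m=0\) is handled by the convention \(V_{\operatorname{min}}(0)=0\).

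Third, we use Proposition~\ref{prop:sharp-CS}(ii) for the matching upper bound. The copula \(C_x\) with \(x=1-3m\) (or \(M\) when \(m=0\)) has mean \(\E|U-V|=m\), since \(\phi(C_x)=x\) and \(\phi=1-3\E|U-V|\) by \eqref{eq_rep_phi_rho}. It attains equality in \eqref{eq:interpretation-sharp-cs}, so its variance equals \(V_{\operatorname{min}}(m)\). Transporting \(C_x\) back to \((U',V')\) through the bijection gives a feasible pair with this variance, and therefore \(\mathsf V(m)=V_{\operatorname{min}}(m)\).

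Finally, the characterization \(\mathsf V(m)=0 \iff m\in\mathfrak C\) follows directly from Proposition~\ref{prop:sharp-CS}(iii). This is consistent with Lemma~\ref{lem:psi-mixability-centers}, because zero variance with mean \(m\) means exactly that \(\Psi(U',V')=m\) almost surely.

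No step is a real obstacle, since all the analytic difficulty sits in Proposition~\ref{prop:sharp-CS} and ultimately in Theorem~\ref{thm:main}. The only points needing care are that the reflection \(V\mapsto \tfrac12-V\) preserves the uniform marginal, and the endpoint bookkeeping at \(m=\tfrac1{2N}\) and \(m=0\).
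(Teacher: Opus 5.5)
Your proposal is correct and follows essentially the same route as the paper. The substitution \(U=U'+\tfrac12\), \(V=\tfrac12-V'\) reduces \(\mathsf V(m)\) to minimizing \(\Var(|U-V|)=\E(U-V)^2-m^2\) over couplings of two \(\cU(0,1)\) variables with \(\E|U-V|=m\), and Proposition~\ref{prop:sharp-CS} then gives both the value and the zero set. Your extra bookkeeping (surjectivity of the substitution, nonemptiness of the feasible set, attainment via \(C_x\)) only makes explicit what the paper's short proof leaves implicit.
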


\begin{remark}
\begin{enumerate}[label = (\alph*)]
    \item Supermodularity provides a natural link between generalized mixability and dependence optimization.
    Among all couplings with prescribed marginals, for expectations of supermodular functions, the comonotonic coupling is maximal, while  the countermonotonic coupling is minimal; see, e.g., \cite[Theorems~3.9.8 and~3.9.15]{muller2002comparison}.
    Extremal expectations of supermodular functions can also be approximated numerically by rearrangement methods \cite{puccetti2015computation}.
    In our setting, \(\Psi(u,v)=|u+v|\) is supermodular, and the countermonotonic coupling \(V'=-U'\) yields the center \(0\).
    Unlike for classical complete mixability, however, centers of generalized \(\Psi\)-mixability need not be unique; see, e.g., \cite[Example~11]{bignozzi2015studying} for an example with two distinct centers.
    Lemma~\ref{lem:psi-mixability-centers} shows an even richer phenomenon: for the centered uniform distribution, the set of centers is the countably infinite set \(\mathfrak C\).

    \item Theorem~\ref{cor:distance-mixability} refines the binary question of exact \(\Psi\)-complete mixability into a quantitative one.
    The function \(m\mapsto\mathsf V(m)\) gives the minimal variance of the aggregate \(\Psi(U',V')\) under the constraint \(\E\Psi(U',V')=m\).
    Its zeros are precisely the exact mixability centers \(m\in\mathfrak C\), whereas \(\mathsf V(m)>0\) otherwise.
    Thus Proposition~\ref{prop:sharp-CS} determines how closely exact \(\Psi\)-complete mixability can be approximated at every prescribed mean aggregate.
\end{enumerate}
\end{remark}

\subsection{Chatterjee's rank correlation and the copula correlation ratio}

As an application of Theorem \ref{thm:main} and the exact \((\varrho,\phi)\)-region in Corollary \ref{cor:main}, we now determine bounds for two measures of \emph{directed} dependence recently studied in the statistics literature.
To be precise, let us first recall that classical measures of association such as Pearson correlation, Kendall's tau or Spearman's rho and footrule quantify the degree of positive or negative (linear) dependence between two random variables \(X\) and \(Y\).
However, they fail to detect non-linear and non-monotone relationships, respectively.
For example, all these quantities vanish for \(X\) standard normal and \(Y = X^2\).

Motivated by the seminal papers \cite{dette2013copula,chatterjee2021new,azadkia2021simple}, in the last decade many works have focused on dependence measures \(\kappa\) satisfying the following axioms:
\begin{enumerate}[label = (\Roman*)]
    \item \label{axiom1} \(\kappa(X,Y) \in [0,1]\),
    \item \label{axiom2} \(\kappa(X,Y) = 0\) if and only if \(X\) and \(Y\) are independent,
    \item \label{axiom3} \(\kappa(X,Y) = 1\) if and only if \(Y\) perfectly depends on \(X\), i.e., there exists a measurable function \(f\) such that \(Y = f(X)\) almost surely.
\end{enumerate}
Note that the functional relation in \ref{axiom3} is not assumed to be increasing or decreasing.
Hence, in contrast to the classical measures of association, suitable dependence measures \(\kappa\) can detect complicated and complex functional dependencies.

The certainly most prominent such dependence measure is Chatterjee's rank correlation \(\xi\) \cite{chatterjee2021new} whose population version is given by
\begin{align}\label{def_chattxi}
    \xi(X,Y) = \frac{\int \Var\bigl(P(Y\geq y \mid X)\bigr) \de P^Y(y)}{\int \Var(1_{\{Y\geq y\}}) \de P^Y(y)};
\end{align}
see \cite{chatterjee2024survey} for a recent survey and \cite{ansari2025direct,ansari2026quantifying,gamboa2022global,huang2022kernel,strothmann2024rearranged,wiesel2022measuring} for several extensions and related constructions.
Small/Large variability of the conditional survival probability in the numerator of \eqref{def_chattxi} indicates low/strong dependence of \(Y\) on \(X\).
The extreme cases of independence and perfect dependence are obtained, where \(P(Y\geq y\mid X)\) is constant or coincides with the indicator function in the denominator for all \(y\).
To better understand the behavior of \(\xi\) and to interpret its values, it is important to compare it with related measures of association.
For instance, attainable sets and inequalities with respect to Spearman's rho, Spearman's footrule, and Kendall's tau have been studied in \cite{ansari2026exact,rockel2026exact,rockel2026kendall}.

In the rest of this section, we focus on comparing \(\xi\) with the (rank-transformed) fraction of explained variance.
This comparison is particularly natural, since \(\xi\) measures the strength of functional dependence of \(Y\) on \(X\), while the fraction of explained variance quantifies how much of the variance in \(Y\) can be accounted for by \(X\).
To explain the details, let us assume for simplicity that \(X\) and \(Y\) have a continuous distribution function.
Then \(\xi\) depends only on the copula \(C \) of \((X,Y)\), and it reduces to the \emph{Dette-Siburg-Stoimenov measure}
\begin{align}\label{eq_xi_DSS}
    \xi(C) = 6 \int_0^1\int_0^1 (\partial_1 C(u,v))^2 \de u \de v - 2;
\end{align}
see \cite{dette2013copula}.
Here, \(\partial_1 C\) denotes the partial derivative of \(C\) with respect to the first component, which exists outside a Lebesgue null set \cite{nelsen2006introduction}.
Interestingly, the functional \(\xi\) in \eqref{eq_xi_DSS} admits a representation via Spearman's footrule through
\begin{align}\label{eq_xi_phi}
    \xi(C) = \phi(C\ast C);
\end{align}
see \cite{fuchs2024quantifying}.
Here, \(D\ast E\) denotes (a version of) the Markov product of two bivariate copulas \(D,E\in \CC\) defined by
\begin{align}\label{def_MK_product}
     D\ast E\,(u,v):=  \int_0^1 \partial_1 D(t,u) \partial_1 E(t,v) \de t, \quad (u,v)\in [0,1]^2.
\end{align}
It is well known that the Markov product is a copula that models conditional independence \cite[Chapter~5]{durante2016principles}.
In particular, if \(Y'\) is a conditionally independent copy of \(Y\) given \(X\), then we have
\begin{align}\label{eq_uus}
    (U,U')\sim C\ast C, \qquad \text{where}\quad U:= F_Y(Y) \quad \text{and} \quad U':=F_Y(Y').
\end{align}
Recall that \(\phi(C) = 6 \int_0^1 C(t,t) \de t - 2\).
Hence, by \eqref{eq_xi_phi}, Chatterjee's rank correlation evaluates the Markov product only on its diagonal.
While Chatterjee's rank correlation naturally complements Pearson's correlation and Spearman's and Kendall's rank correlation, several questions concerning the interpretation of its population value are open.
Since \(\xi\) measures the strength of functional dependence of \(Y\) on \(X\), a natural question is how much of the variance of the (rank-transformed) response \(Y\) can be explained by \(X\) when the value \(\xi(Y,X)\) is given?

We therefore consider the copula correlation ratio \cite{shih2021copula,sungur2005note} defined by
\begin{align}\label{def_cop_cor_ratio}
    \eta(C):=\eta(X,Y) := \frac{\Var\bigl(\E[F_Y(Y)\mid X]\bigr)}{\Var(F_Y(Y))}.
\end{align}
It quantifies the strength of regression dependence and coincides with the fraction of explained variance or the first-order Sobol' index of the rank-transformed response \(U=F_Y(Y)\) with respect to \(X\); see \cite{gamboa2022global}.
The copula correlation ratio satisfies Axioms \ref{axiom1} and \ref{axiom3}, but does not characterize independence.
Instead it satisfies
\begin{enumerate}[label = (\Roman*'), start = 2]
    \item \(\eta(X,Y) = 0\) if and only if \(\Var(\E[F_Y(Y)| X]) = 0\);
\end{enumerate}
see \cite[Theorem 2.2]{ansari2026quantifying}.
The latter is, in particular, fulfilled if \(X\) and \(Y\) are independent (but not vice versa).

While Chatterjee's rank correlation can be expressed as Spearman's footrule of the Markov product (see \eqref{eq_xi_phi}), the copula correlation ratio admits a representation through Spearman's rho of the Markov product via
\begin{align}\label{eq_rep_ccorr}
    \eta(C) = \frac{\Var\bigl(\E[U\mid X]\bigr)}{\Var(U)} = \frac{\E [U U'] - \E U \E U'}{\sqrt{\Var(U)} \sqrt{\Var(U')}} = \varrho(U,U') = \varrho(C\ast C),
\end{align}
where \(U\) and \(U'\) are given as in \eqref{eq_uus}.

As a consequence of the \((\varrho,\phi)\)-bounds in Theorem \ref{thm:main}, we determine the following bounds for the copula correlation ratio via Chatterjee's rank correlation.
Vice versa, this gives bounds for \(\xi\) in terms of \(\eta\); see Figure \ref{fig:xi-rank-sobol-bounds} for an illustration.

\begin{theorem}[\(\xi\)-\(\eta\)-bounds]\label{cor:xi-rank-sobol}
Assume that \((X,Y)\) has a continuous distribution function.
Then we have
\begin{equation}\label{eq:xi-rank-correlation-ratio}
    \max\!\left\{
        0,\underline{\varrho}\bigl(\xi(X,Y)\bigr)
    \right\}
    \leq
    \eta(X,Y)
    \leq
    \min\!\left\{
        \overline{\varrho}\bigl(\xi(X,Y)\bigr),
        2\xi(X,Y)
    \right\},
\end{equation}
where \(\underline{\varrho}\) and \(\brho\) are the bounds for the exact \((\varrho,\phi)\)-region in Corollary \ref{cor:main}.
\end{theorem}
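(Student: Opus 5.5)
We will apply the exact \((\varrho,\phi)\)-region to the Markov product. Let \(C\) be the copula of \((X,Y)\) and put \(D\coloneqq C\ast C\). By \eqref{eq_xi_phi} and \eqref{eq_rep_ccorr}, \(\xi(X,Y)=\phi(D)\) and \(\eta(X,Y)=\varrho(D)\). Since \(D\) is a bivariate copula, Corollary~\ref{cor:main} (equivalently \eqref{eq:ksbound} together with Theorem~\ref{thm:main}) gives
\[
\underline{\varrho}(\phi(D))\le \varrho(D)\le \brho(\phi(D)).
\]
This yields \(\underline{\varrho}(\xi)\le\eta\le\brho(\xi)\). The bound \(\eta\ge 0\) is immediate from the definition \eqref{def_cop_cor_ratio} as a ratio of variances. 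It remains to show \(\eta\le 2\xi\), which is the only step not directly inherited from the \((\varrho,\phi)\)-region. This is where the Cauchy--Schwarz argument mentioned in the introduction enters.

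For \(\eta\le 2\xi\), we work with the conditional i.i.d.\ representation \eqref{eq_uus}. Let \(U=F_Y(Y)\), \(U'=F_Y(Y')\) with \(U,U'\) conditionally i.i.d.\ given \(X\), and write \(h_v(X)\coloneqq P(U\le v\mid X)\). Then \(\xi=6\int_0^1\Var(h_v(X))\,\de v\), since \(\int_0^1 v(1-v)\,\de v=\tfrac16\) and \eqref{def_chattxi} equals this after the rank transform. Next, \(\E[U\mid X]=\int_0^1(1-h_v(X))\,\de v\), hence
\[
\Var(\E[U\mid X])=\Var\Bigl(\int_0^1 h_v(X)\,\de v\Bigr)=\iint \operatorname{Cov}\bigl(h_s(X),h_t(X)\bigr)\,\de s\,\de t.
\]
By Cauchy--Schwarz (applied to the centered \(L^2\) function \(v\mapsto h_v(X)-v\), via Jensen on \([0,1]\)), this is at most \(\int_0^1\Var(h_v(X))\,\de v=\xi/6\). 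Dividing by \(\Var(U)=\tfrac1{12}\) gives \(\eta\le 2\xi\).

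Combining the two parts yields \eqref{eq:xi-rank-correlation-ratio}. The main point requiring care is the identification of \(\xi\) with \(6\int\Var(h_v)\,\de v\) in the continuous setting, i.e.\ that \(P(Y\ge y\mid X)\) and \(1-h_{F_Y(y)}(X)\) agree a.s.\ and that the change of variables \(v=F_Y(y)\) is legitimate. Both follow from continuity of \(F_Y\). Everything else is a direct invocation of Corollary~\ref{cor:main}.
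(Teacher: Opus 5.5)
Your proposal is correct and follows essentially the same route as the paper. You apply Corollary~\ref{cor:main} to the Markov product $C\ast C$ and use $\eta\ge 0$. You then obtain $\eta\le 2\xi$ by Cauchy--Schwarz (Jensen on $[0,1]$) applied to $t\mapsto P(U\le t\mid X)-t$. Your covariance formulation is an expanded version of the paper's one-line estimate $\eta=12\,\E\bigl(\int_0^1(P(U\le t\mid X)-t)\,\de t\bigr)^2\le 12\,\E\int_0^1(P(U\le t\mid X)-t)^2\,\de t=2\xi$.
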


\begin{figure}[t]
    \centering
    \includegraphics[width=0.72\textwidth]{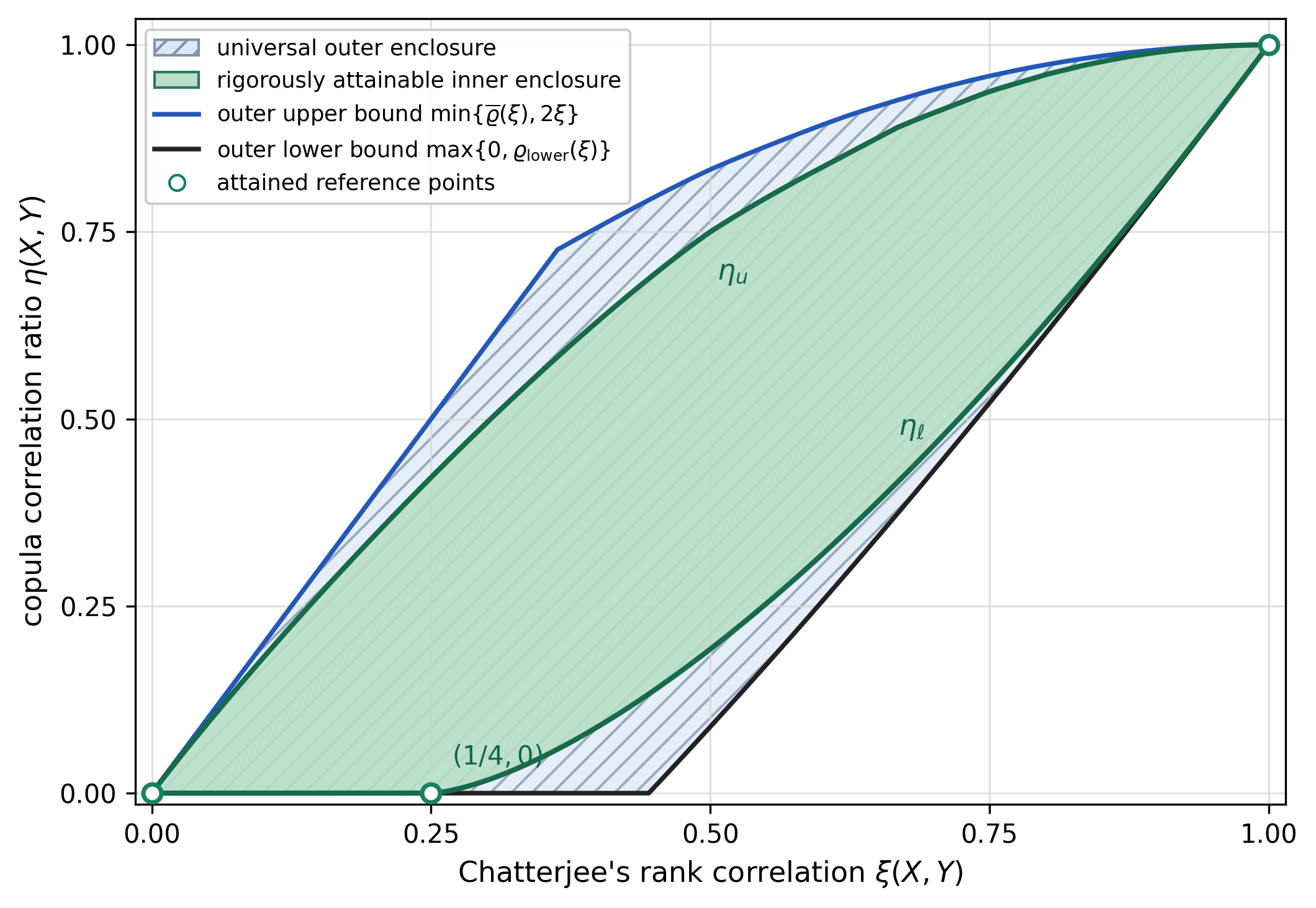}
    \caption{Outer and inner enclosures for \(\Omega_{\xi,\eta} =\left\{(\xi(C),\eta(C)), \, C\in \CC\right\}\).
    The hatched blue region is the universal outer enclosure from Theorem~\ref{cor:xi-rank-sobol}.
    The solid green region is the rigorously attainable inner enclosure from Proposition~\ref{prop:xi-rank-sobol-inner}; every point in it is realized by a conditional-i.i.d. model.
    The two unshaded strips between the inner and outer boundaries remain unresolved.}
    \label{fig:xi-rank-sobol-bounds}
\end{figure}

\begin{remark}
\begin{enumerate}[label = (\alph*)]
    \item The two quantities \(\xi\) and \(\eta\) capture different aspects of directed dependence.
    Chatterjee's \(\xi\) measures changes in the entire conditional distribution of the rank \(U = F_Y(Y)\), whereas \(\eta\) only measures changes in its conditional mean.
    Consequently, deviations of \(P(U\leq t\mid X)\) from \(t\) may cancel in the latter quantity as we show in Example \ref{ex_xi_eta}.
    \item Theorem~\ref{cor:xi-rank-sobol} provides an upper bound for \(\eta\) only.
    Indeed, it applies Theorem~\ref{thm:main} to the Markov product \(C\ast C\), but not every copula is of this form.
    Every such Markov product is symmetric and admits the conditional-i.i.d.\ representation in \eqref{eq_uus}; in particular, this class is not dense in \(\CC\), because the symmetric copulas form a closed proper subclass of \(\CC\).
    Sharpness of the \((\varrho,\phi)\)-bounds over all copulas therefore does not imply sharpness under the conditional-i.i.d.\ restriction; see also Example \ref{ex:xi-eta-upper-not-sharp}.
\end{enumerate}
\end{remark}

\begin{example}[\(\xi(X,Y) = \frac 1 4\) while \(\eta(X,Y) =0\)]\label{ex_xi_eta}
For \(U\sim \cU(0,1)\), set \(Y=U\), and take \(X=|2U-1|\).
Then \(X\sim \cU(0,1)\) and, conditionally on \(X=r\), the variable \(U\) is equally likely to be \((1-r)/2\) or \((1+r)/2\).
Hence, \(\E[U\mid X]=1/2\), so that \(\eta(X,Y)=0\).
On the other hand, let \(U'\) be a conditionally independent copy of \(U\) given \(X\).
Then \(U\) and \(U'\) differ by \(r\) with probability \(1/2\).
This gives
\[
    \xi(X,Y)
    =
    1-3\,\E|U-U'| 
    = 1-3\int_0^1 \underbrace{\E[|U-U'|\mid X=r]}_{= r/2}\de r 
    = 1 - \frac 3 4 =
    \frac14.
\]
Thus a positive value of \(\xi\) need not imply a positive copula correlation ratio.
\end{example}

\begin{example}[Non-sharpness of the upper \(\xi\)--\(\eta\) bound]
\label{ex:xi-eta-upper-not-sharp}
The upper bound in \eqref{eq:xi-rank-correlation-ratio} is not sharp.
Indeed, recall the Cauchy--Schwarz estimate
\begin{align*}
    \eta(X,Y)\leq 2\xi(X,Y).
\end{align*}
With \(U=F_Y(Y)\), set \(g_X(t)\coloneqq P(U\leq t\mid X)-t\).
Equality in the above Cauchy--Schwarz inequality requires \(g_X\) to be constant almost everywhere on \([0,1]\), for almost every \(X\).
Since \(t\mapsto t+g_X(t)\) is a distribution function, this is possible only for \(g_X=0\), and hence only at \((\xi,\eta)=(0,0)\).

Further, for \(x = \frac 1 4 \) and \(N=2\), Theorem~\ref{thm:main} gives \(\brho(1/4)=u(1/4)=5/8\).
Thus \eqref{eq:xi-rank-correlation-ratio} yields the upper bound
\begin{align*}
    \eta\leq
    \min\left\{\brho\left(\frac14\right),2\cdot \frac14\right\}
    =\frac12
\end{align*}
at \(\xi=1/4\), but equality cannot be attained, as discussed before.
Moreover, the attainable set of conditional-i.i.d.\ pairs \((\xi,\eta)\) can be shown to be compact, so the maximal value of \(\eta\) at \(\xi=1/4\) is strictly smaller than \(1/2\).
Hence, the upper bound in \eqref{eq:xi-rank-correlation-ratio} is not sharp.
By contrast, Example~\ref{ex_xi_eta} shows that the lower value \(\eta=0\) is attainable at \(\xi=1/4\).
\end{example}

To complement the outer bounds in \eqref{eq:xi-rank-correlation-ratio}, we now determine a large set of attainable \((\xi,\eta)\)-pairs.
Following the above setting, we restrict attention to random vectors \((X,Y)\) with continuous marginal distribution functions.
We denote the corresponding attainable \((\xi,\eta)\)-region by
\begin{align}\label{def:xi-eta-region}
    \Omega_{\xi,\eta}
    :=
    \left\{
        \bigl(\xi(X,Y),\eta(X,Y)\bigr):
        F_X \text{ and } F_Y \text{ are continuous}
    \right\}.
\end{align}
Recall that \(\xi(X,Y)=\phi(C\ast C)\) and \(\eta(X,Y)=\varrho(C\ast C)\), where \(C\) is the copula of \((X,Y)\).
Thus every point in \(\Omega_{\xi,\eta}\) is generated by a copula of the form \(C\ast C\), or equivalently by a conditional-i.i.d.\ pair \((U,U')\) as in \eqref{eq_uus}.
As discussed above, Theorem~\ref{cor:xi-rank-sobol} applies the exact \((\varrho,\phi)\)-region to the larger class of all copulas and provides an outer enclosure of \(\Omega_{\xi,\eta}\) that is not sharp.

To the best of our knowledge, a sharp description of \(\Omega_{\xi,\eta}\) is not known.
We therefore complement the outer enclosure by constructing an explicit inner region.
The natural object for this purpose is the random conditional distribution
\begin{align*}
    \mu_X:=\operatorname{law}(U\mid X),
    \qquad
    U=F_Y(Y).
\end{align*}
Since \(U\sim\cU(0,1)\), the conditional distributions satisfy the barycenter condition \(\E\mu_X=\lam\), where \(\lam\) denotes Lebesgue measure on \([0,1]\).
This means that, for every Borel set \(A\subseteq[0,1]\),
\begin{align*}
    \E[\mu_X(A)]
    =
    P(U\in A)
    =
    \lam(A).
\end{align*} If \(U'\) is
drawn conditionally independently from the same distribution \(\mu_X\), then
\begin{equation}\label{eq:ciid-coordinates}
    \xi(X,Y)
    =
    1-3\E|U-U'|,
    \qquad
    \eta(X,Y)
    =
    12\Var\left(\int_0^1u\,\mu_X(\de u)\right).
\end{equation}
Hence, \(\xi\) reflects the average within-distribution spread of the conditional laws \(\mu_X\), whereas \(\eta\) depends only on the variation of their conditional means.
This distinction suggests two complementary constructions.

For the lower inner curve \(\eta_\ell\) in Figure~\ref{fig:xi-rank-sobol-bounds}, we exploit the fact that \(\eta\) only depends on the variability of the conditional mean.
To keep \(\eta\) small, we therefore keep \(\E[U\mid X]=1/2\) whenever possible by grouping ranks into symmetric pairs \(\{u,1-u\}\).
We then reveal \(U\) exactly on an increasingly large central interval, while retaining these symmetric two-point conditional distributions outside.
This interpolates between the model in Example~\ref{ex_xi_eta} and complete dependence and yields the lower inner curve
\begin{align}\label{def_eta_ell}
    \eta_\ell(x)
    :=
    \begin{cases}
        0, & 0\leq x\leq\frac14,\\[2mm]
        \left(\dfrac{4x-1}{3}\right)^{3/2},
            & \frac14\leq x\leq1.
    \end{cases}
\end{align}
We refer to Figure \ref{fig:eta-lower-construction} for a visualization of the above construction.
Further details are provided in the proof of Proposition \ref{prop:xi-rank-sobol-inner}.

For the upper inner curve \(\eta_u\) in Figure~\ref{fig:xi-rank-sobol-bounds}, we proceed in the opposite direction.
Here the aim is to make the conditional means vary strongly, and hence to make \(\eta\) large, while keeping a controlled amount of variability within the conditional distribution of \(U\) given \(X\).
For the construction of \(\eta_u\) in \eqref{def_eta_u}, we first split the uniform distribution into two conditional laws with different means whose equally weighted mixture recovers the required uniform marginal distribution.
We then place affine copies of this binary model into \(n\) equal subintervals of \([0,1]\).
The interval index determines the location of \(U\) up to an interval of length \(1/n\), while the parameter \(a\) controls the strength of the binary perturbation, and hence the within-cell variability of \(U\).
As \(n\) increases, this increasingly fine localization drives both \(\xi\) and \(\eta\) towards \(1\), whereas \(a\) determines the position of the resulting \((\xi,\eta)\)-pair along each branch. To be precise, denote for a set \(\cS\subseteq[0,1]^2\) by \(\operatorname{conv}(\cS)\) its convex hull.
For \(n\in\N\) and \(a\in[0,\frac12]\), put
\[
    x_n(a)
    :=
    1-\frac{1-2a^2(3-4a)}{n},
    \qquad
    y_n(a)
    :=
    1-\frac{1-12a^2(1-a)^2}{n^2}.
\]
The construction described above generates the set
\begin{align*}
    \mathcal S_{u}
    :=
    \left\{
        \bigl(x_n(a),y_n(a)\bigr):
        n\in\N,\ a\in[0,\tfrac12]
    \right\}
    \cup\{(1,1)\},
\end{align*}
whose upper convexified boundary is the function \(\eta_u\colon[0,1]\to[0,1]\) defined by
\begin{align}\label{def_eta_u}
    \eta_u(x)
    :=
    \max\left\{
        y:(x,y)\in\operatorname{conv}(\mathcal S_{u})
    \right\},
    \qquad 0\leq x\leq1.
\end{align}
For details of the construction, we refer to the proof of the following proposition, which establishes the corresponding inner enclosure.

\begin{figure}[t]
    \centering
    \includegraphics[width=\textwidth]{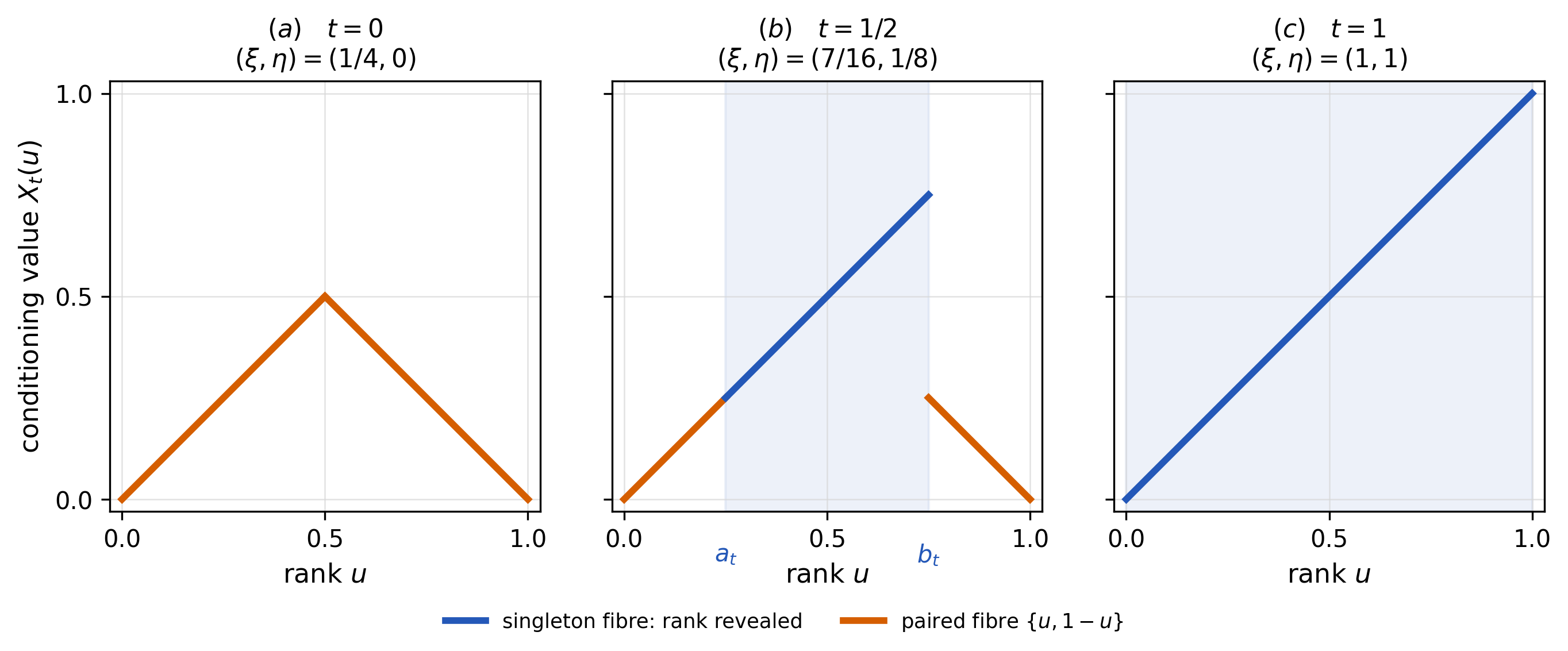}
    \caption{Construction of the lower inner curve \(\eta_\ell\) in \eqref{def_eta_ell}.
    The central interval \([a_t,b_t]\) is revealed exactly, so its conditional fibers are singletons (blue).
    Outside this interval, the symmetric ranks \(u\) and \(1-u\) form a common fiber (orange).
    As \(t\) increases from \(0\) to \(1\), the revealed interval expands and the construction moves from \((\xi,\eta)=(1/4,0)\) to complete dependence at \((1,1)\).}
    \label{fig:eta-lower-construction}
\end{figure}

\begin{figure}[t]
    \centering
    \includegraphics[width=\textwidth]{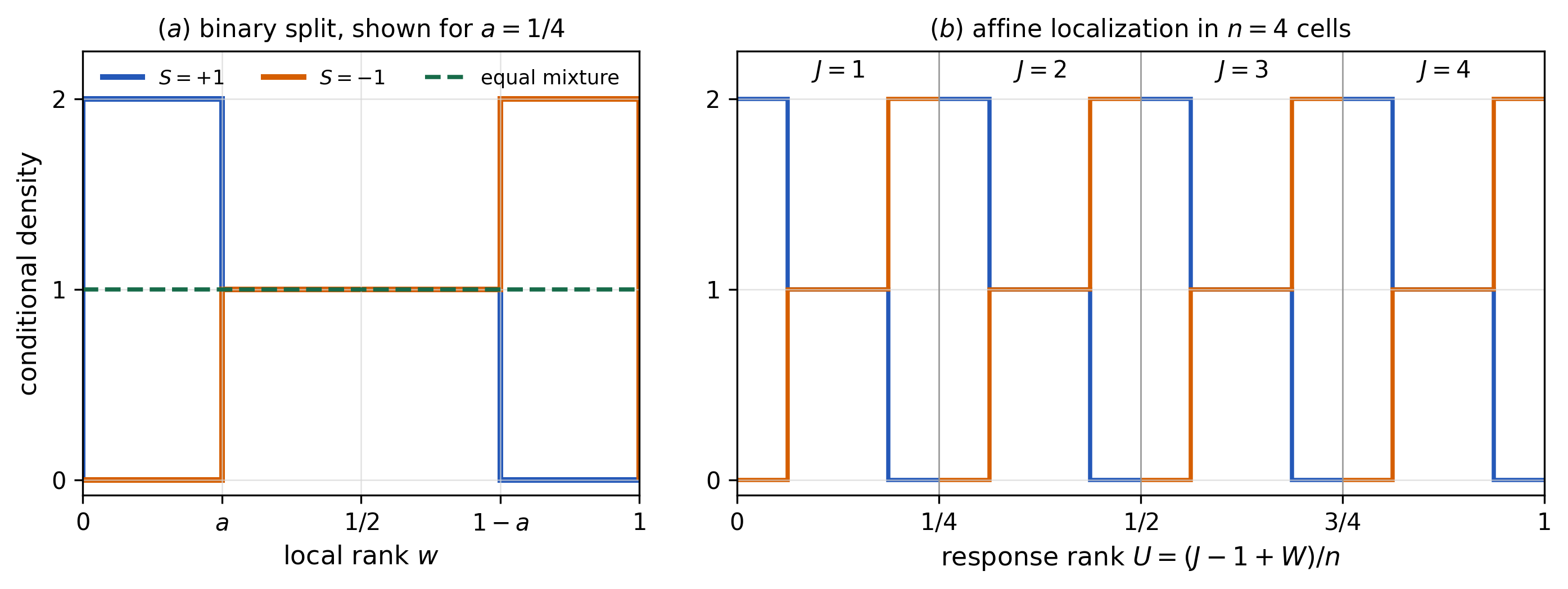}
    \caption{Construction underlying the upper inner curve \(\eta_u\) in \eqref{def_eta_u}.
    Panel~(a) shows, for \(a=1/4\), the two mirror-image conditional densities associated with the binary mixing variable \(S=+1\) and \(S=-1\); their equally weighted mixture is the uniform density; see Eq. \eqref{eq_mix_S}.
    Panel~(b) shows affine copies of this binary model in \(n=4\) equal cells.
    The variable \(J\) selects the cell and \(S\) selects the conditional law within it.
    }
    \label{fig:eta-upper-construction}
\end{figure}

\begin{proposition}[Constructive inner enclosure]
\label{prop:xi-rank-sobol-inner}
The set \(\mathcal S_{u}\) is compact, its projection onto the first coordinate is \([0,1]\), and \(\eta_u\) is a well-defined concave function.
Moreover,
\begin{equation}\label{eq:xi-rank-sobol-inner}
    \left\{
        (x,y):
        0\leq x\leq1,\quad
        \eta_\ell(x)
        \leq y\leq
        \eta_u(x)
    \right\}
    \subseteq
    \Omega_{\xi,\eta}.
\end{equation}
\end{proposition}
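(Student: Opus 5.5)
The plan is to prove that the attainable set \(\Omega_{\xi,\eta}\) is convex, and then to realize the two families of points (the graph of \(\eta_\ell\) and the set \(\mathcal S_u\)) by explicit conditional-i.i.d.\ models. The inclusion \eqref{eq:xi-rank-sobol-inner} then follows from convexity. Throughout, every point of \(\Omega_{\xi,\eta}\) is described through the random conditional law \(\mu_X\) with barycenter \(\E\mu_X=\lam\), together with the coordinates \eqref{eq:ciid-coordinates}. Conversely, any measurable family of probability measures on \([0,1]\) with barycenter \(\lam\), indexed by a continuously distributed \(X\), yields such a model with \(Y=U\). If \(X\) is discrete, it can be smoothed by adding an independent uniform tag, which does not change \(\mu_X\).

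\emph{Step 1: convexity.} Take two models with conditional laws \(\mu^{(1)}_{X_1}\) and \(\mu^{(2)}_{X_2}\), and let \(B\sim\mathrm{Bernoulli}(p)\) be independent of both. Use \(X=(B,X_B)\), encoded as a real variable with a continuous distribution. The mixture still has barycenter \(\lam\), so \(U\sim\cU(0,1)\). The quantity \(\E|U-U'|\) is then the \(p\)-mixture of the two values, and hence so is \(\xi\). Since \(\E[U\mid X]\) has mean \(\tfrac12\) under both models, \(\eta=12\,\E(\E[U\mid X]-\tfrac12)^2\) is also the \(p\)-mixture. Therefore \(\Omega_{\xi,\eta}\) is convex.

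\emph{Step 2: the set \(\mathcal S_u\) and the function \(\eta_u\).}
\begin{itemize}
\item \emph{Range of \(x_n(a)\).} Since \(a\mapsto 2a^2(3-4a)\) increases from \(0\) to \(\tfrac12\) on \([0,\tfrac12]\), we get \(x_n([0,\tfrac12])=[1-\tfrac1n,1-\tfrac1{2n}]\). These intervals cover \([0,1)\), and adding the point \((1,1)\) gives projection \([0,1]\).
\item \emph{Compactness.} For \(n\ge n_0\), every point of \(\mathcal S_u\) lies within \(O(1/n_0)\) of \((1,1)\). So \(\mathcal S_u\) is a finite union of continuous images of \([0,\tfrac12]\) together with a sequence of such images converging to \((1,1)\), which is included. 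Hence \(\mathcal S_u\) is compact.
\item \emph{\(\eta_u\) well defined and concave.} \(\operatorname{conv}(\mathcal S_u)\) is compact, and its projection onto the first coordinate is \([0,1]\). So the maximum in \eqref{def_eta_u} exists, and the upper boundary of a compact convex set is concave.
\end{itemize}

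\emph{Step 3: realizing the points.} This is the computational core and the main obstacle.
\begin{itemize}
\item \emph{Upper family.} Construct two mirror densities \(1\pm g_a\) on \([0,1]\) with \(g_a(1-u)=-g_a(u)\), so that their average is uniform (Figure~\ref{fig:eta-upper-construction}(a)). Let \(X=(J,S)\), where \(J\) is uniform on \(\{1,\dots,n\}\) and \(S=\pm1\), and place the affine copy of the density \(1+Sg_a\) on cell \(J\).
\item \emph{Scaling.} Within a cell, \(\E|U-U'|\) scales by \(1/n\). The conditional mean is \((J-\tfrac12)/n\) plus a term \(S\,c(a)/n\). This should give \(\xi=1-\kappa(a)/n\) and \(\eta=1-\kappa'(a)/n^2\), using \(\Var\bigl((J-\tfrac12)/n\bigr)=\tfrac1{12}(1-\tfrac1{n^2})\).
\item \emph{Choice of \(g_a\).} \(g_a\) must be chosen so that \(\kappa(a)=1-2a^2(3-4a)\) and \(\kappa'(a)=1-12a^2(1-a)^2\). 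The natural first try is a piecewise-constant density, e.g.\ \(1+S\) on a window of width \(a\) near one end and \(1-S\) symmetrically near the other end. I would pin it down by matching these two polynomials; this matching is where I expect the most work.
\item \emph{Lower family.} For \(\eta_\ell\), fix the revealed central interval \([\tfrac{1-t}{2},\tfrac{1+t}{2}]\). Let the fibers be singletons there, and two-point fibers \(\{u,1-u\}\) with equal weights outside. The pair fibers contribute conditional mean \(\tfrac12\), while the singletons contribute variance \(\int (u-\tfrac12)^2\de u\) over the central interval, giving \(\eta=t^3\). The pairs contribute \(\E|U-U'|=\int_{t}^{1} r/2\,\de r=(1-t^2)/4\), giving \(\xi=(1+3t^2)/4\). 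Eliminating \(t\) yields \(\eta=((4x-1)/3)^{3/2}\). The horizontal segment \([0,\tfrac14]\times\{0\}\) follows by mixing independence \((0,0)\) with Example~\ref{ex_xi_eta} \((\tfrac14,0)\), using Step 1.
\end{itemize}

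\emph{Step 4: conclusion.} Fix \(x\in[0,1]\). The point \((x,\eta_\ell(x))\) lies in \(\Omega_{\xi,\eta}\) by Step 3. The point \((x,\eta_u(x))\) lies in \(\operatorname{conv}(\mathcal S_u)\subseteq\Omega_{\xi,\eta}\) by Steps 1–3. By convexity, the vertical segment joining them also lies in \(\Omega_{\xi,\eta}\), which proves \eqref{eq:xi-rank-sobol-inner}.
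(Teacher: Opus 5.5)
Your proposal is correct and follows essentially the same route as the paper: convexity of $\Omega_{\xi,\eta}$ by an independent Bernoulli mixture, the central-interval/symmetric-pair model for $\eta_\ell$, and affine copies of a binary model in $n$ cells for $\mathcal S_u$. Your ``natural first try'' is exactly the paper's choice $G_\sigma(s)=s+\sigma\min\{s,a,1-s\}$, and the matching you expected to be hard is short, since $\E|W-W'|=\tfrac13-2\int_0^1\min\{s,a,1-s\}^2\,\de s$ and $\E[W\mid S]=\tfrac12-S\,a(1-a)$ give $\xi_0(a)=2a^2(3-4a)$ and $\eta_0(a)=12a^2(1-a)^2$; the paper's extra check that $\eta_\ell\le\eta_u$ only ensures the region is nonempty and is not needed for the inclusion as argued in your Step 4.
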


\begin{remark}
\begin{enumerate}[label = (\alph*)]
\item The functions \(\eta_\ell\) and \(\eta_u\) provide, respectively, constructive lower and upper inner bounds for the attainable region \(\Omega_{\xi,\eta}\).
Together with the outer bounds in \eqref{eq:xi-rank-correlation-ratio}, they yield explicit inner and outer enclosures of the exact \((\xi,\eta)\)-region; see Figure~\ref{fig:xi-rank-sobol-bounds}.
Numerical evaluation shows that the constructive inner enclosure occupies approximately \(81.4\%\) of the area of the outer enclosure.
Determining the exact region \(\Omega_{\xi,\eta}\), and in particular whether either \(\eta_\ell\) or \(\eta_u\) is sharp, remains open.
\item The lower construction illustrates how \(\xi\) can increase while \(\eta\) remains small.
On the interval \(0\leq x\leq1/4\), the lower bound satisfies \(\eta_\ell(x)=0\); these values are attained by mixing independence with the model in Example~\ref{ex_xi_eta}.
For \(x\in[1/4,1]\), progressively revealing \(U\) on a central interval increases both measures and yields the curve
    \[
        \eta_\ell(x)
        =
        \left(\frac{4x-1}{3}\right)^{3/2},
    \]
    connecting \((1/4,0)\) with complete dependence at \((1,1)\).
    \item The two parameters in the upper construction have distinct roles.
    The integer \(n\) controls the coarse localization of \(U\): the larger \(n\), the smaller the interval of length \(1/n\) in which \(U\) is known to lie, thereby driving both \(\xi\) and \(\eta\) towards \(1\).
    By contrast, the parameter \(a\) controls the conditional structure within each interval through the strength of the binary perturbation.
    Thus \(n\) governs the global localization of \(U\), whereas \(a\) determines the remaining conditional variability within each local cell.
\end{enumerate}
\end{remark}

\section{Copula and optimal-transport preliminaries}\label{sec:prelim}

We first collect the copula and measure-theoretic notation used throughout the proofs.
Let \(\N\coloneqq\{1,2,\ldots\}\), and let \(\lam\) and \(\lam_2\coloneqq\lam\otimes\lam\) denote Lebesgue measure on \([0,1]\) and \([0,1]^2\), respectively.
We write \(\cU(0,1)\) for the uniform distribution on \([0,1]\).
For a Borel set \(A\), the restriction of a measure \(\mu\) to \(A\) is denoted by \(\mu\vert_A\), and \(\cM(A)\) denotes the set of finite Borel measures on \(A\).
If \(T\colon A\to B\) is Borel measurable and \(\mu\in\cM(A)\), its pushforward under \(T\) is the measure \(T_\#\mu\) defined by
\[
    (T_\#\mu)(E)\coloneqq\mu\bigl(T^{-1}(E)\bigr),
    \qquad E\subseteq B\ \text{Borel}.
\]
We write \(\operatorname{proj}_1(u,v)\coloneqq u\) and \(\operatorname{proj}_2(u,v)\coloneqq v\), and denote the marginals of \(\mu\in\cM([0,1]^2)\) by \(\mu_i\coloneqq(\operatorname{proj}_i)_\#\mu\), \(i=1,2\).

A \emph{bivariate copula} is a bivariate distribution function \(C\) on \([0,1]^2\) with uniform margins, that is,
\begin{align}\label{eq_unif_marg}
    C(u,1)=u,
    \qquad
    C(1,v)=v,
    \qquad u,v\in[0,1].
\end{align}    
We denote the class of bivariate copulas by \(\CC\).
The concept of copula is motivated by Sklar's theorem which states that, for every bivariate distribution function \(H\) with marginal distribution functions \(F\) and \(G\), there exists \(C\in\CC\) such that
\begin{align}\label{the_Sklar}
    H(x,y)=C\bigl(F(x),G(y)\bigr),
    \qquad x,y\in\R.
\end{align}
Further, the copula \(C\) is unique on \(\operatorname{Ran}(F)\times\operatorname{Ran}(G)\), and hence unique whenever \(F\) and \(G\) are continuous.
Conversely, for any \(C\in\CC\) and any univariate distribution functions \(F,G\), the function \(H\) in \eqref{the_Sklar} is a bivariate distribution function with marginals \(F\) and \(G\); see \cite[Thm.~2.3.3]{nelsen2006introduction}.
In particular, if \(X\) and \(Y\) have continuous distribution functions, their copula coincides with the joint distribution function of the rank-transformed vector \((F_X(X),F_Y(Y))\).

For \(C\in\CC\), let \(\mu_C\) denote the Borel probability measure determined by
\[
    \mu_C\bigl([0,u]\times[0,v]\bigr)=C(u,v),
    \qquad u,v\in[0,1].
\]
By \eqref{eq_unif_marg}, both marginal distributions of \(\mu_C\) are given by the Lebesgue measure \(\lambda|_{[0,1]} = \cU(0,1)\); such measures are also called \emph{doubly stochastic}.
Thus \(C\mapsto\mu_C\) is a bijection from \(\CC\) onto the set
\[
    \Pi
    \coloneqq
    \bigl\{
        \pi\in\cM([0,1]^2):
        \pi_1=\pi_2=\lam
    \bigr\}
\]
of couplings of two uniform distributions.

The upper Fr\'{e}chet copula \(M(u,v)\coloneqq\min\{u,v\}\) models comonotonicity, i.e., perfect positive dependence.
A copula \(C\) is called a \emph{shuffle of min} if there is a finite interval partition of \([0,1]\) and a measure-preserving bijection \(T\colon[0,1]\to[0,1]\) that is affine with slope \(+1\) or \(-1\) on the interior of each partition interval such that
\[
    \mu_C=(\operatorname{id},T)_\#\lam.
\]
Thus the mass of a shuffle of min is concentrated on finitely many line segments of slope \(+1\) or \(-1\); see \cite{mikusinski1992shuffles}.

\begin{example}[Bertino copulas]
For a copula \(C\), the function \(\delta_C(t)\coloneqq C(t,t)\) is called its diagonal.
Given a copula diagonal \(\delta\), the associated \emph{Bertino copula} is
\begin{equation}\label{def:bertino}
    B_\delta(u,v)
    \coloneqq
    u\wedge v-
    \min_{u\wedge v\leq t\leq u\vee v}\bigl(t-\delta(t)\bigr);
\end{equation}
see \cite[Sec.~5]{fernandezsanchez2016members}.
The Bertino copula family \((B_x)_{x\in [- 1/2,1]}\) that describes the lower bound \(\underline{\varrho}\) due to \cite{kokolbukovsek2024exact} is obtained from the diagonals
\[
    \delta_a(t)\coloneqq
    \begin{cases}
        0, & 0\leq t\leq a,\\
        t-a, & a\leq t\leq1-a,\\
        2t-1, & 1-a\leq t\leq1,
    \end{cases} \qquad a\in[0,\tfrac12],
\]    
by setting
\begin{equation}\label{def:lower-bertino}
    B_x\coloneqq B_{\delta_{a_x}} \qquad \text{for} \quad a_x\coloneqq\frac12\left(1-\sqrt{\frac{1+2x}{3}}\right).
\end{equation}
This is the three-strip shuffle of min from \cite[Example~6]{kokolbukovsek2024exact}, which satisfies \(\phi(B_x)=x\) and \(\varrho(B_x)=\underline{\varrho}(x)\).
Figure~\ref{fig:bertino-lower} illustrates how the support of \(B_x\) changes along the lower boundary.
\end{example}

\begin{figure}[t]
    \centering
    \includegraphics[width=\textwidth]{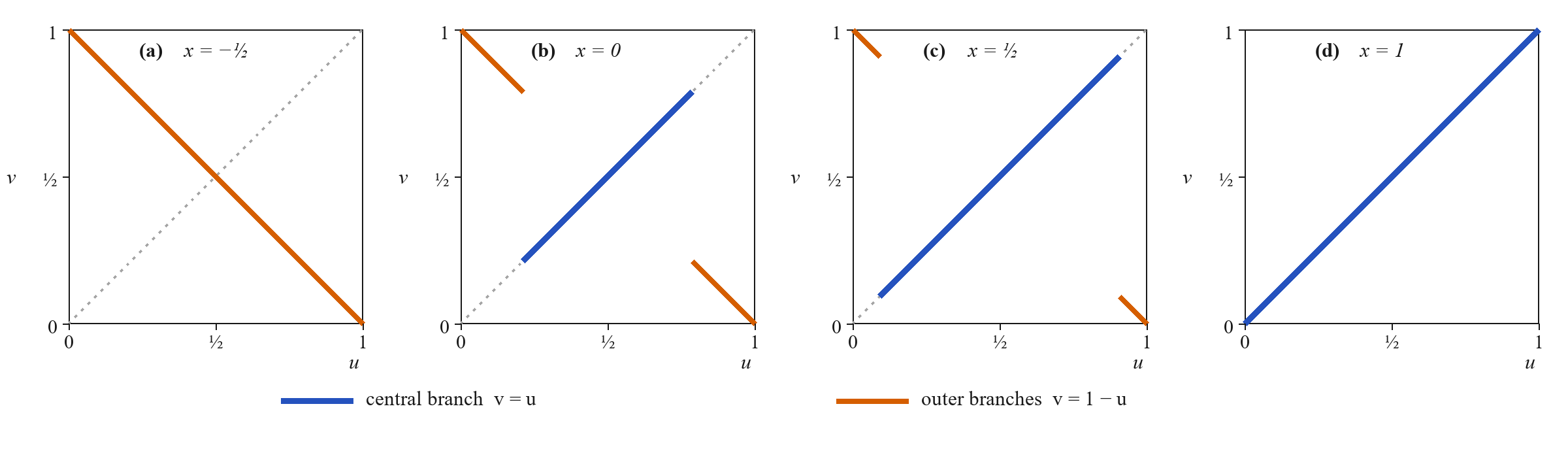}
    \caption{Supports of four lower-bound Bertino copulas \(B_x=B_{\delta_{a_x}}\) from \eqref{def:lower-bertino}.
    The solid blue segment is the central branch \(v=u\), the solid orange segments are the outer branches \(v=1-u\), and the dotted gray diagonal is shown for reference.
    As \(x\) increases, \(a_x\) decreases from \(\tfrac12\) to \(0\), deforming the countermonotone copula \(W(u,v)=\max\{u+v-1,0\}\) into the Min copula \(M\).
    Since \((\varrho(B_x),\phi(B_x))=(\underline{\varrho}(x),x)\), the family traces the lower boundary of \(\Omega_{\varrho,\phi}\).}
    \label{fig:bertino-lower}
\end{figure}

For \(C\in\CC\), let \(C^\top(u,v)\coloneqq C(v,u)\) be its transpose, and call \(C\) \emph{symmetric} if \(C=C^\top\).
With \(S(u,v)\coloneqq(v,u)\), this is equivalent to \(S_\#\mu_C=\mu_C\); accordingly, we denote the set of symmetric couplings by
\[
    \Pi_{\operatorname{sym}}
    \coloneqq
    \{\pi\in\Pi:S_\#\pi=\pi\}.
\]
Since \(\phi\) and \(\varrho\) in \eqref{eq:defs} are affine in \(C\) and invariant under transposition, symmetrization \(C\mapsto\tfrac12(C+C^\top)\) preserves both quantities.
Hence all extremal problems below admit a symmetric optimizer.
Throughout, \((U,V)\sim C\) means that \((U,V)\) has distribution \(\mu_C\).

\begin{lemma}[Moment representation]\label{lem:moments}
	For every $C\in\CC$ the identities \eqref{eq_rep_phi_rho} hold.
\end{lemma}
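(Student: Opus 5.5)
We prove the two identities in \eqref{eq_rep_phi_rho} separately. In both cases we express the integral of \(C\) as an expectation of indicator functions via Fubini, and then use the uniform marginals.

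\emph{Spearman's rho.} For \((U,V)\sim C\) we have \(C(u,v)=\E[1_{\{U\le u\}}1_{\{V\le v\}}]\). By Fubini,
\[
\int_{[0,1]^2}C\de\lam_2=\E\bigl[(1-U)(1-V)\bigr]=1-\E U-\E V+\E[UV]=\E[UV],
\]
since \(\E U=\E V=\tfrac12\). Uniform marginals also give \(\E U^2=\E V^2=\tfrac13\), so
\[
\E(U-V)^2=\tfrac23-2\E[UV].
\]
Hence \(\varrho(C)=12\E[UV]-3=12\bigl(\tfrac13-\tfrac12\E(U-V)^2\bigr)-3=1-6\E(U-V)^2\).

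\emph{Spearman's footrule.} Again by Fubini,
\[
\int_0^1C(t,t)\de t=\E\int_0^1 1_{\{\max(U,V)\le t\}}\de t=1-\E\max(U,V).
\]
We use \(\max(U,V)=\tfrac12(U+V+|U-V|)\) together with \(\E U=\E V=\tfrac12\), which gives \(\E\max(U,V)=\tfrac12+\tfrac12\E|U-V|\). Therefore
\[
\phi(C)=6\Bigl(\tfrac12-\tfrac12\E|U-V|\Bigr)-2=1-3\E|U-V|.
\]

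This completes the proof. The only step needing justification is the Fubini interchange, which holds because all integrands are nonnegative and bounded on \([0,1]^2\).
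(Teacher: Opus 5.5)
Your proof is correct and follows essentially the same route as the paper's: Fubini to get $\E[(1-U)(1-V)]=\E[UV]$ and $\int_0^1 C(t,t)\,\de t = 1-\E\max(U,V)$, then the uniform marginal moments and the identity $\max(U,V)=\tfrac12(U+V+|U-V|)$. Nothing is missing.
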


\begin{proof}
	By Fubini's theorem,
	\[
	\int_{[0,1]^2}C(u,v)\de\lam_2(u,v)
	=\int_{[0,1]^2}P(U\le u, V\le v)\de\lam_2(u,v)
	=\E[(1-U)(1-V)]
	=\E[UV],
	\]
	where the last equality uses the uniform margins.
	Moreover, for \(D:= U-V\), we obtain $2\E[UV]=\E U^2+\E V^2-\E D^2=\tfrac23-\E D^2$, and hence
	\(
	\varrho=12\E[UV]-3=1-6\E D^2.
	\)
	Similarly
	\(
	\int_0^1 C(t,t)\de t=\E[1-\max(U,V)]
	\)
	and
	\(
	\max(U,V)=\tfrac{U+V+|D|}{2},
	\)
	so
	\(
	\phi=6\bigl(1-\tfrac12-\tfrac12\E|D|\bigr)-2=1-3\E|D|.
	\)
\end{proof}

Every \(\pi\in\Pi\) satisfies \(\pi_1+\pi_2=2\lam\).
We next characterize \(\Pi_{\operatorname{sym}}\) through measures on the triangle
\[
    T\coloneqq\{(x,y)\in[0,1]^2:x\leq y\}.
\]
Define
\begin{align}\label{def:Pi_T}
    \Pi_T \coloneqq \{ \nu\in \cM(T)\mid \nu_1 + \nu_2 = \lam\}.
\end{align}
Every \(\nu\in\Pi_T\) has total mass \(\nu(T)=\tfrac12\), obtained by evaluating \(\nu_1+\nu_2=\lam\) on \([0,1]\).

\begin{lemma}[Representation of symmetric couplings]\label{lem:pair-measure-bijection}
    The mapping \(    \Phi:\Pi_T\to\Pi_{\mathrm{sym}}\), \(\Phi(\nu):=\nu+S_\#\nu, \) is a bijection.
\end{lemma}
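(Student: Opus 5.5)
We have to prove three things: that \(\Phi\) maps \(\Pi_T\) into \(\Pi_{\mathrm{sym}}\), that it is injective, and that it is surjective. The injectivity argument will also produce the inverse explicitly.

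\emph{Well-definedness.} Let \(\nu\in\Pi_T\) and set \(\pi:=\nu+S_\#\nu\). This is a finite Borel measure on \([0,1]^2\). It is symmetric because \(S\circ S=\operatorname{id}\), so \(S_\#\pi=S_\#\nu+\nu=\pi\). For the marginals we use \(\operatorname{proj}_1\circ S=\operatorname{proj}_2\). This gives \(\pi_1=\nu_1+(S_\#\nu)_1=\nu_1+\nu_2=\lam\). By symmetry, \(\pi_2=\pi_1=\lam\). Hence \(\pi\in\Pi_{\mathrm{sym}}\).

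\emph{Injectivity.} We recover \(\nu\) from \(\Phi(\nu)\) by restricting to the open triangle and the diagonal separately. Let \(\Delta:=\{(x,x):x\in[0,1]\}\) and \(T^\circ:=T\setminus\Delta\). Since \(\nu\) lives on \(T\), the measure \(S_\#\nu\) lives on \(S(T)=\{x\ge y\}\). Therefore \(\pi|_{T^\circ}=\nu|_{T^\circ}\). On the diagonal, \(S\) acts as the identity, so \((S_\#\nu)|_\Delta=\nu|_\Delta\) and hence \(\pi|_\Delta=2\nu|_\Delta\). Together these give
\[
    \nu=\pi|_{T^\circ}+\tfrac12\,\pi|_\Delta .
\]
This formula shows that \(\nu\) is determined by \(\Phi(\nu)\), which proves injectivity.

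\emph{Surjectivity.} Given \(\pi\in\Pi_{\mathrm{sym}}\), define \(\nu:=\pi|_{T^\circ}+\tfrac12\pi|_\Delta\), a measure on \(T\).
\begin{itemize}
    \item We first show \(\Phi(\nu)=\pi\). Using \(S(T^\circ)=\{x>y\}\) and \(S_\#\pi=\pi\), we get \(S_\#(\pi|_{T^\circ})=\pi|_{\{x>y\}}\). Also \(S_\#(\pi|_\Delta)=\pi|_\Delta\). Hence
    \[
        \nu+S_\#\nu=\pi|_{\{x<y\}}+\pi|_{\{x>y\}}+\pi|_\Delta=\pi .
    \]
    \item We then show \(\nu\in\Pi_T\). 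By the well-definedness computation, \(\nu_1+\nu_2=(\Phi(\nu))_1=\pi_1=\lam\).
\end{itemize}

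The only delicate point is the diagonal \(\Delta\), where \(\nu\) and \(S_\#\nu\) overlap. It is handled by the factor \(\tfrac12\) and by checking the measurability of the restrictions, which is routine since \(T^\circ\) and \(\Delta\) are Borel sets.
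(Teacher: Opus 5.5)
Your proof is correct and follows essentially the same route as the paper: the same marginal computation gives well-definedness, and the same decomposition \(\nu=\pi|_{\{x<y\}}+\tfrac12\pi|_{\mathsf{Diag}}\) along the open triangle and the diagonal gives surjectivity. The only cosmetic difference is in injectivity: you exhibit this formula as an explicit left inverse, whereas the paper restricts the identity \(\nu+S_\#\nu=\widetilde\nu+S_\#\widetilde\nu\) to \(\{x<y\}\) and to the diagonal.
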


\begin{proof}
Let \(\nu\in\Pi_T\) and set \(\pi:=\Phi(\nu)=\nu+S_\#\nu.\) Since \(S\circ S=\operatorname{id}\), we obtain
\[
    S_\#\pi
    =
    S_\#\nu+S_\#(S_\#\nu)
    =
    S_\#\nu+\nu
    =
    \pi,
\]
so that \(\pi\) is symmetric.
Moreover, \(\pi_1 = \nu_1 + (S_\#\nu)_1 = \nu_1 + \nu_2 = \lam\).
Similarly, \(\pi_2 = \lam\).
Hence \(\pi\in\Pi_{\mathrm{sym}}\), and thus \(\Phi\) is well defined.

To prove surjectivity, let \(\pi\in\Pi_{\mathrm{sym}}\), and denote by \(\mathsf{Diag}:=\{(x,x):x\in[0,1]\}\) the diagonal.
Define a measure \(\nu\) on \(T\) by
\[
    \nu
    :=
    \pi|_{\{x<y\}}
    +
    \frac12\,\pi|_{\mathsf{Diag}}.
\]
Since \(\pi\) is symmetric, we have \(\pi|_{\{x<y\}} = S_\#(\pi|_{\{x>y\}})\).
Moreover, \(S\) acts as the identity on \(\mathsf{Diag}\).
Therefore, \(\pi=\nu+S_\#\nu\).
Consequently,
\begin{align*}
    \lam
    =
    \pi_1 = \nu_1 + (S_\#\nu)_1 = \nu_1 + \nu_2, 
\end{align*}
which shows that \(\nu\in\Pi_T\).
Thus \(\Phi\) is surjective.

Finally, suppose that \(\nu,\widetilde\nu\in\Pi_T\) satisfy \(\nu+S_\#\nu = \widetilde\nu+S_\#\widetilde\nu. \) Restricting this identity to \(\{x<y\}\) gives \(\nu|_{\{x<y\}} = \widetilde\nu|_{\{x<y\}}, \) because \(S_\#\nu\) and \(S_\#\widetilde\nu\) are supported on \(\{x\ge y\}\).
Restricting it to \(\mathsf{Diag}\) yields
\(
    2\nu|_{\mathsf{Diag}}
    =
    2\widetilde\nu|_{\mathsf{Diag}}.
\)
Hence \(\nu=\widetilde\nu\), and therefore \(\Phi\) is injective.
\end{proof}

Using the representation of symmetric couplings in Lemma \ref{lem:pair-measure-bijection}, the \(\varrho\)-maximal value in dependence on \(\phi(C) = x\) can be written as an optimal transport problem via the measures \(\Pi_T\):

\begin{lemma}[Representation of \(\overline{\varrho}\)]\label{prop:pairlp}
	Let $x\in[-\tfrac12,1]$. For the \(\varrho\)-maximal value in \eqref{defmaxprob}, we have
    \begin{align}\label{eq:lp}
        \overline{\varrho}(x) = 1 - 12\,\min\Bigl\{
			\int_T(b-a)^2\de\nu(a,b)
			\;:\;
			\nu\in\Pi_T,\ \int_T(b-a)\de\nu(a,b) = \tfrac{m}{2}
		\Bigr\},
    \end{align}
    where \(\Pi_T\) is defined in \eqref{def:Pi_T} and \(m\coloneqq\tfrac{1-x}{3}\).
\end{lemma}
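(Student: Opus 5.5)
The plan is to chain the moment representation of Lemma~\ref{lem:moments}, the symmetrization argument, and the bijection of Lemma~\ref{prop:pairlp}'s predecessor, Lemma~\ref{lem:pair-measure-bijection}.

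First, by Lemma~\ref{lem:moments}, for \((U,V)\sim C\) we have \(\phi(C)=x\) if and only if \(\E|U-V|=m\) with \(m=\tfrac{1-x}{3}\). In that case \(\varrho(C)=1-6\E(U-V)^2\). Hence
\[
\overline{\varrho}(x)=1-6\inf\Bigl\{\int(b-a)^2\de\pi : \pi\in\Pi,\ \int|b-a|\de\pi=m\Bigr\}.
\]
Next I reduce to symmetric couplings. Both integrands \((b-a)^2\) and \(|b-a|\) are invariant under \(S\). So for any feasible \(\pi\), the symmetrization \(\tfrac12(\pi+S_\#\pi)\in\Pi_{\mathrm{sym}}\) is again feasible and has the same cost. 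The infimum over \(\Pi\) therefore equals the infimum over \(\Pi_{\mathrm{sym}}\).

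Then I apply the bijection \(\Phi\) of Lemma~\ref{lem:pair-measure-bijection}. For \(\pi=\nu+S_\#\nu\) and any \(S\)-invariant integrand \(g\) we get \(\int g\de\pi=2\int_T g\de\nu\). On \(T\) we have \(|b-a|=b-a\). Substituting, the constraint becomes \(\int_T(b-a)\de\nu=m/2\), and the cost becomes \(2\int_T(b-a)^2\de\nu\). This yields the factor \(1-6\cdot 2=1-12\) in \eqref{eq:lp}.

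The remaining step is to justify writing \(\min\) rather than \(\inf\), which is the only mild obstacle. The set \(\Pi_T\) is weakly compact: it is tight on the compact set \(T\), and the condition \(\nu_1+\nu_2=\lam\) is preserved under weak limits because marginals are continuous pushforwards. The constraint functional is continuous since \(b-a\) is a continuous bounded function on \(T\), so the feasible set is closed. The cost is also continuous.

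It remains to show the feasible set is nonempty for every \(x\in[-\tfrac12,1]\). The copula \(B_x\) from \eqref{def:lower-bertino} satisfies \(\phi(B_x)=x\). Its symmetrization maps via \(\Phi^{-1}\) to a feasible \(\nu\). Alternatively, one can use convex combinations of \(M\) and the countermonotone copula, since \(\phi\) is affine.

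A continuous function on a nonempty compact set attains its minimum, so the expression in \eqref{eq:lp} is well defined and equals \(\overline{\varrho}(x)\). The same compactness argument also shows that the maximum in \eqref{defmaxprob} is attained.
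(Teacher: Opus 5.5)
Your proposal is correct and follows essentially the same route as the paper: the moment representation, symmetrization, and the correspondence between symmetric couplings and \(\Pi_T\). The paper writes this correspondence directly as \(\nu=\tfrac12(\min,\max)_\#\mu_C\), which is exactly \(\Phi^{-1}\), and your compactness and nonemptiness argument justifying \(\min\) instead of \(\inf\) fills in a point the paper leaves implicit here.
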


\begin{proof}
	Given a symmetric $C\in\CC$, let $\nu\coloneqq\tfrac12\,(\min,\max)_\#\mu_C$ on $T$.
	For integrable  $g:[0,1]\to\R$,
	\begin{align*}
		\int_T [g(a)+g(b)]\de\nu(a,b)
		=& \frac12\int_{[0,1]^2}[g(u\wedge v)+g(u\vee v)]\de\mu_C(u,v)\\
		=& \frac12\int_{[0,1]^2} [g(u)+g(v)]\de\mu_C(u,v)
		= \int_0^1 g(t)\de\lam(t),
	\end{align*}
	so $\nu\in\Pi_T$.
	In particular, taking $g\equiv1$ gives $2\nu(T)=1$.
	Moreover,
	\(
	\int_T h(b-a)\de\nu(a,b)=\tfrac12\E[h(|U-V|)]
	\)
    for \((U,V)\sim C\) and for every bounded Borel function $h:[0,\infty)\to\R$.
    Conversely, given $\nu\in\Pi_T$, the measure $\mu(A)\coloneqq\int_T[\mathbf 1_A(a,b)+\mathbf 1_A(b,a)]\de\nu(a,b)$ is symmetric with uniform margins by the same computation, hence doubly stochastic, and reverses the correspondence.
    Now apply Lemma~\ref{lem:moments} and the symmetrization remark above; the constraint $\phi=x$ becomes $\E|D|=m$, i.e.\
	\(
	\int_T(b-a)\de\nu(a,b)=\tfrac m2.
	\)
\end{proof}

Problem \eqref{eq:lp} is an infinite-dimensional linear program of transportation type: mass on the triangle $T$ with a coupled marginal constraint, cost $(b-a)^2$ and one moment constraint.
Sections~\ref{sec:attain} and \ref{sec:dual} solve it.
We remark that a second reduction is available and was used for independent numerical validation: $\phi$ depends on $C$ only through the diagonal $\delta_C(t)=C(t,t)$, symmetrization preserves the diagonal, and among symmetric copulas with diagonal $\delta$ the diagonal copula $E_\delta(u,v)=\min\{u,v,\tfrac{\delta(u)+\delta(v)}{2}\}$ is pointwise maximal, see \cite{nelsen2004best,ubedaflores2008best,fernandezsanchez2016members}.
Since $\varrho$ is increasing with respect to the pointwise order, the $\varrho$-maximal boundary equals $\max\{\varrho(E_\delta)\}$ over all diagonals with prescribed integral.
The objective is a concave functional of $\delta$.

Turning to the dual problem \eqref{eq:OT_dual2}, we introduce the following notion of feasibility.
A pair \((f,\theta)\) is called \emph{feasible} for the dual problem \eqref{eq:OT_dual2} if it satisfies the inequality constraint
\begin{align}
    f(a) + f(b) + \theta ((b-a)- m) \leq (b-a)^2 \quad \text{for all } (a,b)\in T.
\end{align}
By symmetry, this is equivalent to the full-square constraint in \eqref{eq:OT_dual2}.
Similarly, we call a measure \(\pi\in\Pi\) \emph{feasible} for the primal problem \eqref{eq:OT_primal} if it satisfies the fixed-moment constraint \eqref{eq:OT-moment-constraint} below.
Since the optimization problem \eqref{eq:OT_primal} is symmetric, we may equivalently focus on feasible measures \(\nu\in\Pi_T\) satisfying
\[
    \int_T(b-a)\de\nu(a,b)=\frac m2.
\]

For the next result, recall that \(\mathsf{D}(m)\) is the value of the dual formulation where we optimize with respect to one feasible potential.

\begin{lemma}\label{lem_opt}
    For any feasible potential \((f,\theta)\) and for any feasible \(\nu\in \Pi_T\), we have
    \begin{align}\label{eq_lem_dualineq}
        \int_0^1 f(u) \de u \leq \int_T (b-a)^2 \de \nu(a,b)
    \end{align}
    and thus \(\mathsf{D}(m) \leq \mathsf{P}(m)\).
\end{lemma}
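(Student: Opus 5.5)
The inequality \eqref{eq_lem_dualineq} is weak duality, and I would prove it by integrating the pointwise dual constraint against the feasible measure \(\nu\). Let \((f,\theta)\) be feasible and let \(\nu\in\Pi_T\) satisfy \(\int_T(b-a)\de\nu(a,b)=\tfrac m2\). Integrating the constraint
\[
    f(a)+f(b)+\theta\bigl((b-a)-m\bigr)\leq (b-a)^2,\qquad (a,b)\in T,
\]
with respect to \(\nu\) is legitimate because \(f\) is continuous on \([0,1]\), hence bounded, and \(\nu\) is a finite measure with total mass \(\tfrac12\).

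The three terms on the left-hand side are handled separately.

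\emph{The potential term.} By the defining property \(\nu_1+\nu_2=\lam\) of \(\Pi_T\),
\[
    \int_T [f(a)+f(b)]\de\nu(a,b)=\int f\de\nu_1+\int f\de\nu_2=\int_0^1 f(u)\de u .
\]
This is the same marginal computation used in the proof of Lemma~\ref{prop:pairlp}.

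\emph{The multiplier term.} Since \(\nu(T)=\tfrac12\), we get
\[
    \theta\int_T\bigl((b-a)-m\bigr)\de\nu=\theta\Bigl(\tfrac m2-\tfrac m2\Bigr)=0 .
\]

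Combining the two gives
\[
    \int_0^1 f(u)\de u\leq\int_T(b-a)^2\de\nu(a,b),
\]
which is \eqref{eq_lem_dualineq}.

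For the consequence \(\mathsf D(m)\le\mathsf P(m)\), I would compare normalizations. By \eqref{eq:OT_dual2}, \(\mathsf D(m)\) is the supremum of \(2\int_0^1 f\). The proof of Lemma~\ref{prop:pairlp} shows that \(\pi=\nu+S_\#\nu\) satisfies \(\int(b-a)^2\de\pi=2\int_T(b-a)^2\de\nu\) and \(\int|b-a|\de\pi=2\int_T(b-a)\de\nu=m\). So feasible symmetric \(\pi\) and feasible \(\nu\) correspond, and multiplying \eqref{eq_lem_dualineq} by \(2\) gives \(2\int_0^1 f\le\int(b-a)^2\de\pi\).

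The primal minimum \(\mathsf P(m)\) may be taken over symmetric couplings. A non-symmetric feasible \(\pi\) can be replaced by \(\tfrac12(\pi+S_\#\pi)\), which has the same cost and the same moment because both integrands are symmetric. Taking the supremum over feasible \((f,\theta)\) and the infimum over feasible \(\nu\) then yields \(\mathsf D(m)\le\mathsf P(m)\).

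No step here is a real obstacle. The only point needing care is the factor \(2\) between the triangle formulation and the full-square formulation, which I would settle explicitly through Lemma~\ref{lem:pair-measure-bijection}.
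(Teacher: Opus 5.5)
Your proposal is correct and follows the paper's proof essentially step by step. You integrate the dual constraint against \(\nu\), use \(\nu_1+\nu_2=\lambda\) for the potential term, and use \(\nu(T)=\tfrac12\) with the moment constraint to cancel the \(\theta\)-term; then you symmetrize an arbitrary feasible \(\pi\) and pass to its triangular representative to account for the factor \(2\). The paper writes that representative as \(\tfrac12(\min,\max)_\#\pi^{\mathrm{sym}}\), which coincides with your \(\pi^{\mathrm{sym}}|_{\{a<b\}}+\tfrac12\pi^{\mathrm{sym}}|_{\mathsf{Diag}}\).
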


\begin{proof}
    Since \(\nu_1+\nu_2=\lam\), we obtain
\begin{align*}
    \int_0^1 f(u) \de u &= \int_0^1 f(a) \de \nu_1(a) + \int_0^1 f(b) \de \nu_2(b) \\
    &= \int_T (f(a) + f(b)) \de \nu(a,b) \\
    &= \int_T \left[ f(a) + f(b) + \theta\left((b-a) - m\right)\right] \de \nu(a,b)\\
    &\leq \int_T (b-a)^2 \de \nu(a,b),
\end{align*}
because \(\nu(T)=\tfrac12\) and feasibility gives \(\int_T((b-a)-m)\de\nu=\tfrac m2-m\nu(T)=0\).

Now let \(\pi\in\Pi\) be feasible for the primal problem.
Its symmetrization \(\pi^{\mathrm{sym}}\coloneqq\tfrac12(\pi+S_\#\pi)\) has the same cost and moment.
The associated triangular measure
\[
    \nu_\pi\coloneqq
    \frac12(\min,\max)_\#\pi^{\mathrm{sym}}
\]
is feasible by Lemma~\ref{prop:pairlp}.
Applying the inequality just proved yields
\[
    2\int_0^1f(u)\de u
    \leq
    2\int_T(b-a)^2\de\nu_\pi(a,b)
    =
    \int_{[0,1]^2}(b-a)^2\de\pi(a,b).
\]
Taking the supremum over feasible potentials and the minimum over feasible couplings proves \(\mathsf D(m)\leq\mathsf P(m)\).
\end{proof}

The classical Kantorovich duality theorem identifies the primal and dual values under standard regularity assumptions \cite[Theorem~5.10]{villani2009optimal}.
For the present setting, the constrained extension in \cite[Theorem~2.1]{zaev2015monge} can be stated as follows.
For every cost \(c\in C([0,1]^2)\) and every linear subspace \(W\subset C([0,1]^2)\),  we have the strong duality
\begin{equation}\label{eq:zaev-duality}
\inf_{\substack{\pi\in\Pi\\
        \int w\,\de\pi=0\ \text{for all }w\in W}}
    \int_{[0,1]^2}c\,\de\pi \\
=
\sup_{\substack{\varphi,\psi\in C([0,1]),\ w\in W\\
        \varphi\oplus\psi+w\leq c}}
    \left\{
        \int_0^1\varphi\,\de\lam
        +
        \int_0^1\psi\,\de\lam
    \right\}.
\end{equation}
The additional linear constraint in our primal problem \eqref{eq:OT_primal} fixes the first absolute moment of the difference \(U-V\):
\begin{equation}\label{eq:OT-moment-constraint}
    \int_{[0,1]^2}|b-a|\de\pi(a,b)=m.
\end{equation}
The following proposition identifies the left-hand side of \eqref{eq:zaev-duality} with \(\mathsf P(m)\) in \eqref{eq:OT_primal} and its right-hand side, after symmetrization, with \(\mathsf D(m)\) in \eqref{eq:OT_dual2}.
The annihilator constraint on the left-hand side reproduces \eqref{eq:OT-moment-constraint} and the two-potential supremum on the right-hand side equals the one-potential value \(\mathsf D(m)\).
Therefore, \eqref{eq:zaev-duality} reduces to the following strong duality result.

\begin{proposition}[Kantorovich duality with linear constraint]\label{prop_KR_duality}
    For every \(m\in[0,\tfrac12]\), the values of the primal problem \eqref{eq:OT_primal} and the dual problem \eqref{eq:OT_dual2} coincide, i.e., \(\mathsf{P}(m)=\mathsf{D}(m)\).
\end{proposition}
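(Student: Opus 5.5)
We plan to deduce the statement from the constrained Kantorovich duality \eqref{eq:zaev-duality} of \cite[Theorem~2.1]{zaev2015monge}. The approach is to make a concrete choice of cost and subspace, and then reduce the two-potential dual to the one-potential dual \(\mathsf D(m)\) by symmetrization. The weak inequality \(\mathsf D(m)\le\mathsf P(m)\) is already available from Lemma~\ref{lem_opt}, so only the reverse inequality \(\mathsf P(m)\le \mathsf D(m)\) has to be shown.

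\emph{Step 1 (primal side).} We take \(c(a,b)\coloneqq(b-a)^2\), which lies in \(C([0,1]^2)\). As the subspace we take the one-dimensional space \(W\coloneqq\operatorname{span}\{w_0\}\) with \(w_0(a,b)\coloneqq|b-a|-m\), which is also continuous. Every \(\pi\in\Pi\) is a probability measure, so \(\int w_0\,\de\pi=\int|b-a|\de\pi-m\). Hence the condition \(\int w\de\pi=0\) for all \(w\in W\) is exactly the moment constraint \eqref{eq:OT-moment-constraint}. The left-hand side of \eqref{eq:zaev-duality} is therefore the infimum in \eqref{eq:OT_primal}. Since \(\Pi\) is weakly compact and both \(c\) and \(w_0\) are continuous, the feasible set is weakly closed. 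It is nonempty for \(m\in[0,\tfrac12]\): by continuity of \(\pi\mapsto\int|b-a|\de\pi\) along the segment of mixtures of \(M\) (value \(0\)) and the countermonotone coupling (value \(\tfrac12\)), every intermediate value is attained. So the infimum is attained and equals \(\mathsf P(m)\).

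\emph{Step 2 (dual side).} Elements of \(W\) have the form \(w=-\theta w_0\) with \(\theta\in\R\). The dual constraint \(\varphi\oplus\psi+w\le c\) is thus \(\varphi\oplus\psi\le c_\theta\), and the right-hand side of \eqref{eq:zaev-duality} equals the two-potential supremum in \eqref{eq:OT_dual}. It remains to show that this supremum is at most \(\mathsf D(m)\).

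Let \((\varphi,\psi,\theta)\) be admissible. Because \(c_\theta\) is symmetric, \(c_\theta(a,b)=c_\theta(b,a)\), we also have \(\psi\oplus\varphi\le c_\theta\). Averaging the two inequalities, the symmetric potential \(f\coloneqq\tfrac12(\varphi+\psi)\in C([0,1])\) satisfies
\[
f(a)+f(b)=\tfrac12\bigl(\varphi(a)+\psi(b)\bigr)+\tfrac12\bigl(\psi(a)+\varphi(b)\bigr)\le c_\theta(a,b).
\]
Moreover, \(2\int f\de\lam=\int\varphi\de\lam+\int\psi\de\lam\). Hence every two-potential value is dominated by a one-potential value, and the supremum in \eqref{eq:OT_dual} is at most \(\mathsf D(m)\).

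\emph{Conclusion.} Combining Steps 1 and 2 with \eqref{eq:zaev-duality} gives \(\mathsf P(m)\le\mathsf D(m)\), and together with Lemma~\ref{lem_opt} this yields \(\mathsf P(m)=\mathsf D(m)\).

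The only delicate point is checking that the hypotheses of \cite{zaev2015monge} are met, namely a compact space, a continuous bounded cost and a subspace of continuous functions. We also need that the theorem is stated with infimum and supremum, so that no attainment is required. Both are routine in this compact setting. The symmetrization in Step 2 is elementary.
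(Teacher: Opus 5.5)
Your proposal is correct and follows essentially the paper's route: you apply Zaev's constrained duality \eqref{eq:zaev-duality} with \(c(a,b)=(b-a)^2\) and \(W=\operatorname{span}\{|b-a|-m\}\), then pass from the two-potential supremum to \(\mathsf D(m)\) via the symmetrization \(f=\tfrac12(\varphi+\psi)\), which you spell out more explicitly than the paper does. There is one harmless sign slip: \(\varphi\oplus\psi+w\le c\) becomes \(\varphi\oplus\psi\le c_\theta\) for \(w=\theta w_0\), not \(w=-\theta w_0\), but this is immaterial since \(\theta\) ranges over all of \(\R\).
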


It is important to mention that minimizers of the primal problem exist here by compactness, whereas the supremum in the dual problem need not be attained, even when strong duality holds. 
If, however, there is some feasible pair \((f,\theta)\) and some feasible measure \(\nu\in\Pi_T\) such that equality holds in \eqref{eq_lem_dualineq}, then \(\nu\) is optimal for the triangular problem \eqref{eq:lp}, and \(\nu+S_\#\nu\) is optimal for the primal problem \eqref{eq:OT_primal}.
Furthermore, \((f,\theta)\) is dual optimal in the sense that the supremum in \eqref{eq:OT_dual2} is attained.
This motivates us to consider the \emph{contact set}
\begin{align}\label{def_contactset}
    \Gamma_{f,\theta}\coloneqq\left\{(a,b)\in T \mid f(a)+f(b)+\theta((b-a)-m)=(b-a)^2\right\},
\end{align}
for any feasible \((f,\theta)\).
The following result is a direct consequence of Lemma \ref{lem_opt} and Proposition \ref{prop_KR_duality}.
It states, in particular, that if \(\nu\) is concentrated on the contact set of \((f,\theta)\), then it is optimal.

\begin{corollary}[Optimality via contact set]\label{cor_feas}
    Let \(\nu\in\Pi_T\) be feasible and let \((f,\theta)\) be feasible.
    If \(\nu(T\setminus\Gamma_{f,\theta})=0\), then \(\nu\) is optimal for \eqref{eq:lp}, \(\nu+S_\#\nu\) is optimal for \eqref{eq:OT_primal}, and \((f,\theta)\) is optimal for \eqref{eq:OT_dual2}.
\end{corollary}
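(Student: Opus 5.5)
The statement to prove is Corollary~\ref{cor_feas}. I would read it off from the chain of equalities in the proof of Lemma~\ref{lem_opt}, and I would not need Proposition~\ref{prop_KR_duality} at all.

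\emph{Step 1: equality in the dual inequality.} Fix feasible \(\nu\in\Pi_T\) and feasible \((f,\theta)\). The proof of Lemma~\ref{lem_opt} shows
\[
\int_0^1 f\,\de u=\int_T\bigl[f(a)+f(b)+\theta((b-a)-m)\bigr]\de\nu(a,b)\le\int_T(b-a)^2\de\nu(a,b).
\]
The only inequality here is the integration of the pointwise bound \(f(a)+f(b)+\theta((b-a)-m)\le (b-a)^2\). This holds with equality on \(\Gamma_{f,\theta}\), and \(\nu\) gives no mass to \(T\setminus\Gamma_{f,\theta}\). So the integrands agree \(\nu\)-a.e., and \(\int_0^1 f\,\de u=\int_T(b-a)^2\de\nu\). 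Measurability of \(\Gamma_{f,\theta}\) is not an issue, since it is closed: \(f\) is continuous.

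\emph{Step 2: optimality of \(\nu\) for \eqref{eq:lp}.} Let \(\tilde\nu\in\Pi_T\) be any feasible measure. By \eqref{eq_lem_dualineq} applied to \((f,\theta)\) and \(\tilde\nu\),
\[
\int_T(b-a)^2\de\tilde\nu\ge\int_0^1 f\,\de u=\int_T(b-a)^2\de\nu .
\]
Hence \(\nu\) attains the minimum in \eqref{eq:lp}.

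\emph{Step 3: optimality of \(\pi\coloneqq\nu+S_\#\nu\) for \eqref{eq:OT_primal}.} By Lemma~\ref{lem:pair-measure-bijection}, \(\pi\in\Pi\). Since \(S\) preserves \(|b-a|\), we get \(\int|b-a|\de\pi=2\int_T(b-a)\de\nu=m\), so \(\pi\) is primal feasible. Similarly \(\int(b-a)^2\de\pi=2\int_T(b-a)^2\de\nu=2\int_0^1 f\,\de u\). The second part of the proof of Lemma~\ref{lem_opt} shows that every feasible \(\pi'\in\Pi\) satisfies \(2\int_0^1 f\,\de u\le \int(b-a)^2\de\pi'\). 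Therefore \(\pi\) is a minimizer, and \(\mathsf P(m)=2\int_0^1 f\,\de u\).

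\emph{Step 4: optimality of \((f,\theta)\) for \eqref{eq:OT_dual2}.} The triangle constraint on \((f,\theta)\) is equivalent to \(f\oplus f\le c_\theta\) on the whole square, by symmetry of \(c_\theta\). So \((f,\theta)\) is admissible in \eqref{eq:OT_dual2} and \(2\int_0^1 f\,\de u\le\mathsf D(m)\). Lemma~\ref{lem_opt} gives \(\mathsf D(m)\le\mathsf P(m)=2\int_0^1 f\,\de u\). Hence the supremum is attained at \((f,\theta)\) and \(\mathsf D(m)=\mathsf P(m)\).

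There is no real obstacle here. The only delicate point is the bookkeeping of factors of \(2\) between the triangle formulation and the square formulation, and the use of \(\nu(T)=\tfrac12\) to make the \(\theta\)-term integrate to zero. Both are already handled in Lemma~\ref{lem_opt}.
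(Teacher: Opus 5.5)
Your proof is correct. It is the same certificate argument the paper sketches in the paragraph just before the corollary, with one difference: the paper calls the corollary a direct consequence of Lemma~\ref{lem_opt} together with the strong duality of Proposition~\ref{prop_KR_duality}, while you use only the weak-duality inequality of Lemma~\ref{lem_opt}. That suffices. Once the integrated inequality becomes an equality for the given pair, the chain
\[
2\int_0^1 f\,\de u\le \mathsf D(m)\le \mathsf P(m)\le \int_{[0,1]^2}(b-a)^2\,\de(\nu+S_\#\nu)=2\int_0^1 f\,\de u
\]
collapses. This shows at once that \(\nu+S_\#\nu\) is primal optimal, that the dual supremum is attained at \((f,\theta)\), and that \(\mathsf P(m)=\mathsf D(m)\); your Step~2 gives the same conclusion on the triangle. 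Your route is more self-contained. It does not depend on Zaev's constrained Kantorovich theorem, nor on the reduction from two potentials to one symmetric potential, which Proposition~\ref{prop_KR_duality} asserts without a detailed proof. Strong duality only matters for knowing a priori that the dual and primal values agree when no matching certificate is available. It adds nothing to the certificate argument here, and the same holds where the uniqueness proof in Proposition~\ref{prop:rho-phi-Cx} uses dual optimality of \((f_x,\theta)\).
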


To solve the maximization problem \eqref{defmaxprob}, we will apply Corollary \ref{cor_feas}.
To be precise, in Section \ref{sec:attain}, we provide a feasible candidate solution \(\nu\in \Pi_T\) and determine the associated copula \(C_x\).
In Section \ref{sec:dual}, we provide a feasible pair \((f,\theta)\) and then show that \(\nu\) is concentrated on the contact set of \((f,\theta)\).
Consequently, \(C_x\) is optimal and, as we will verify, it satisfies \(\phi(C_x) = x\) and \(\varrho(C_x) = \overline{\varrho}(x)\), so that \(\overline{\varrho}\) determines the upper boundary of \(\Omega_{\varrho,\phi}\).

\section{Construction of optimal copulas}\label{sec:attain}

In this section, we construct a family of distributions \(\pi_x\in \Pi\), \(x\in [-\frac 1 2, 1)\), such that the associated copula family \((C_x)_{x\in [-1/2,1]}\) satisfies \(\phi(C_x)= x\).
In Proposition~\ref{prop:rho-phi-Cx}, we will show that these copulas are optimal in the sense that they maximize Spearman's rho, i.e. \(\varrho(C_x) = \overline{\varrho}(x)\).
The case \(x=1\) is trivial, where we define \(C_x(u,v) := \min\{u,v\}\).
Hence, we assume \(x\in [-\frac 1 2,1)\) in the following.

Recall that, for \((U,V)\sim C\), we have \(\phi(C) = 1 - 3 \E|U-V|\).
Then, for fixed \(\phi(C) = x\), the first moment of \(|U-V|\) is given by
\begin{align}\label{eq:mean-gap}
    m := m(x) = \frac{1-x}{3}.
\end{align}
We aim to minimize the second moment of \(U-V\) over all copulas with fixed first absolute moment \(m\).
In view of \eqref{eq:mean-gap}, choose \(N\in \N\) such that
\begin{align}\label{eq:footrule-interval}
    x\in I_N = \left[1-\frac{3}{2N}, 1-\frac{3}{2N+2}\right).
\end{align}
Note that \(I_1 = [-\frac 1 2 ,\frac 1 4)\), \(I_2 = [\frac 1 4 , \frac 1 2)\), \(I_3 = [\frac 1 2, \frac 5 8)\), ..., and \(\bigcup_{N\in \N} I_N = [-\frac 1 2 , 1)\).
Defining
\begin{align}\label{def:RL}
    R :=\frac{1}{2N+2} \quad \text{and} \quad L := \frac{1}{2N},
\end{align}
the interval in \eqref{eq:footrule-interval} can also be written as \(I_N = [1-3L,1-3R)\).
Now, we divide \(I_N\) into two intervals of equal length, where the left and right part are denoted by
\begin{align}\label{def:I_NL-I_NR}
    I_N^L:= \left[ 1-3L, 1-\frac 3 2 (L+R)\right) \quad \text{and} \quad I_N^R:= \left[1-\frac 3 2 (L+R), 1-3R\right),
\end{align}
respectively.
Define
\begin{align}\label{eq:dN}
    d_N := \frac{1}{L-R} = 2N(N+1)    
\end{align}
as the reciprocal of the interval length of \([R,L]\).
Then we have \(d_N \lam([R,L)) = d_N(L-R) = 1\).
Further, it is \(\lam(I_N) = 3/d_N\).
Note that
\begin{align*}
    x\in I_N^L ~ \Longleftrightarrow ~ m\in \left(\frac{L+R}{2},L\right]\qquad\text{and}\qquad
    x\in I_N^R ~ \Longleftrightarrow ~ m\in \left(R,\frac{L+R}{2}\right].
\end{align*}
To define the candidate couplings \((\pi_x)_x\) via measures \((\nu_x)\) in \(\Pi_T\), we introduce three functions depending on the constraint \(x\) as follows:
\begin{align}\label{def:vptheta}
\begin{split}
    v = v(x) &:= \begin{cases}
        \sqrt{\frac {2 (L-m(x))}{d_N}} , &\text{for } x\in I_N^L,\\
        \sqrt{\frac {2 (m(x)-R)}{d_N}} , &\text{for } x\in I_N^R.
    \end{cases}\\
    p = p(x) &:= 1-d_N v(x),\\
    \theta = \theta(x) &:= \begin{cases}
        2L - v(x), &\text{for } x\in I_N^L,\\
        2R + v(x), &\text{for } x\in I_N^R.
    \end{cases}
\end{split}
\end{align}
The measure \(\nu_x\) is now defined by
\begin{align}\label{def_nu_x}
    \nu_x := \begin{cases}
        \nu_x^L, &\text{for } x\in I_N^L,\\
        \nu_x^R, &\text{for } x\in I_N^R,
    \end{cases}
\end{align}
with
\begin{align}\label{def_nu_xL}
    \nu_x^L&:= \sum_{k=0}^{N-1} (A_k^L)_\#(\lam\vert_{[0,pL]}) + \sum_{k=0}^{N-1} \alpha_k\, (B_k^L)_\#(\lam\vert_{[0,Nv]}) + \sum_{k=0}^{N-1} \beta_k\, (C_k^L)_\#(\lam\vert_{[0,Nv]}),\\
    \label{def_nu_xR} \nu_x^R&:= \sum_{k=0}^{N} (A_k^R)_\#(\lam\vert_{[0,pR]}) + \sum_{k=0}^{N-1} \alpha_k\, (B_k^R)_\#(\lam\vert_{[0,Nv]}) + \sum_{k=0}^{N-1} \beta_k\, (C_k^R)_\#(\lam\vert_{[0,Nv]})
\end{align}
for weights \(\alpha_k := 1 - \frac k N\) and \(\beta_k:= \frac{k+1}{N}\).
Further, for
\begin{align}
    \ell_L(t):= L-v+\frac t N \quad \text{and} \quad \ell_R(t):= R + \frac t N, \qquad  t\in [0,Nv],
\end{align}
the pushforward measures are defined via the mappings
\begin{align}
    \label{def:A_kL} A_k^L(s) &:= \left(k\theta + Nv + s, k\theta + Nv + s + L\right), & & 0\leq s \leq pL,\\
    B_k^L(t) &:= \left(k\theta + t, k\theta + t + \ell_L(t)\right),  &&  0 \leq t \leq Nv,\\
    C_k^L(t) &:= \left(k\theta + t + \ell_L(t), (k+1) \theta + t\right) &&  0 \leq t \leq Nv,\\
    \label{def:A_kR} A_k^R(s)
    &:=
    \left(
        k\theta+s,\,
        k\theta+s+R
    \right),
    && 0\leq s \leq pR,\\
    B_k^R(t)
    &:=
    \left(
        k\theta+pR+t,\,
        k\theta+pR+t+\ell_R(t)
    \right),
    && 0 \leq t \leq Nv,\\
    \label{def:C_kR} C_k^R(t)
    &:=
    \left(
        k\theta+pR+t+\ell_R(t),\,
        (k+1)\theta+pR+t
    \right),
    && 0 \leq t \leq Nv.
\end{align}

In Lemma \ref{lem_nux_in_PiT} below, we show that \(\nu_x\in \Pi_T\).
Hence, by Lemma \ref{lem:pair-measure-bijection}, the following measure is a coupling in \(\Pi_{\operatorname{sym}}\):
\begin{align}\label{def:pi_x}
        \pi_x \coloneqq \nu_x + S_\# \nu_x.
    \end{align}
Since \(\pi_x\) has \(\cU(0,1)\)-marginals, we may define the associated copula \(C_x\) as follows.

\begin{definition}[Optimal copula \(C_x\)]\label{def_Cx}
    For \(x\in [-\frac 1 2, 1)\), the copula associated with \(\pi_x\) is defined by
    \begin{align}\label{def_C_x}
        C_x(u,v) &:= \pi_x([0,u]\times [0,v]), \quad (u,v)\in [0,1]^2.
    \end{align}
    For \(x=1\), we define \(C_1(u,v) := \min\{u,v\}\) as the upper Fr\'{e}chet copula.
\end{definition}

\begin{remark}\leavevmode
    \begin{enumerate}[label = (\alph*)]
        \item Since \(\pi_x\) is symmetric, also \(C_x\) is symmetric.
        In Proposition \ref{prop_C_x}, we will show that \(\phi(C_x) = x\) and we will determine a closed-form expression for \(\varrho(C_x)\).
        In Proposition \ref{prop:rho-phi-Cx}, we verify that \(\varrho(C) \leq \varrho(C_x)\) for every copula \(C\) with \(\phi(C)=x\); hence, \(C_x\) is optimal in the sense that it maximizes Spearman's \(\varrho\).
        \item Figure~\ref{fig:construction} displays the support of \(\nu_x\) for one right-half parameter choice, together with the corresponding dual potential.
        Figure~\ref{fig:copulas} displays the support of \(\pi_x=\nu_x+S_\#\nu_x\), which is the distribution underlying \(C_x\).
        For \(x \in \{1-\frac{3}{2N}\colon N\in \N\} = \{-\frac 1 2,\frac 1 4, \frac 1 2, \frac 5 8,\ldots\}\), \(C_x\) reduces to an equidistant even shuffle of min copula for which optimality has been shown in \cite[Theorem~11 and Example~12]{kokolbukovsek2024exact}. 
    \end{enumerate}
\end{remark}

\begin{figure}[p!]
	\centering
	\includegraphics[width=\textwidth]{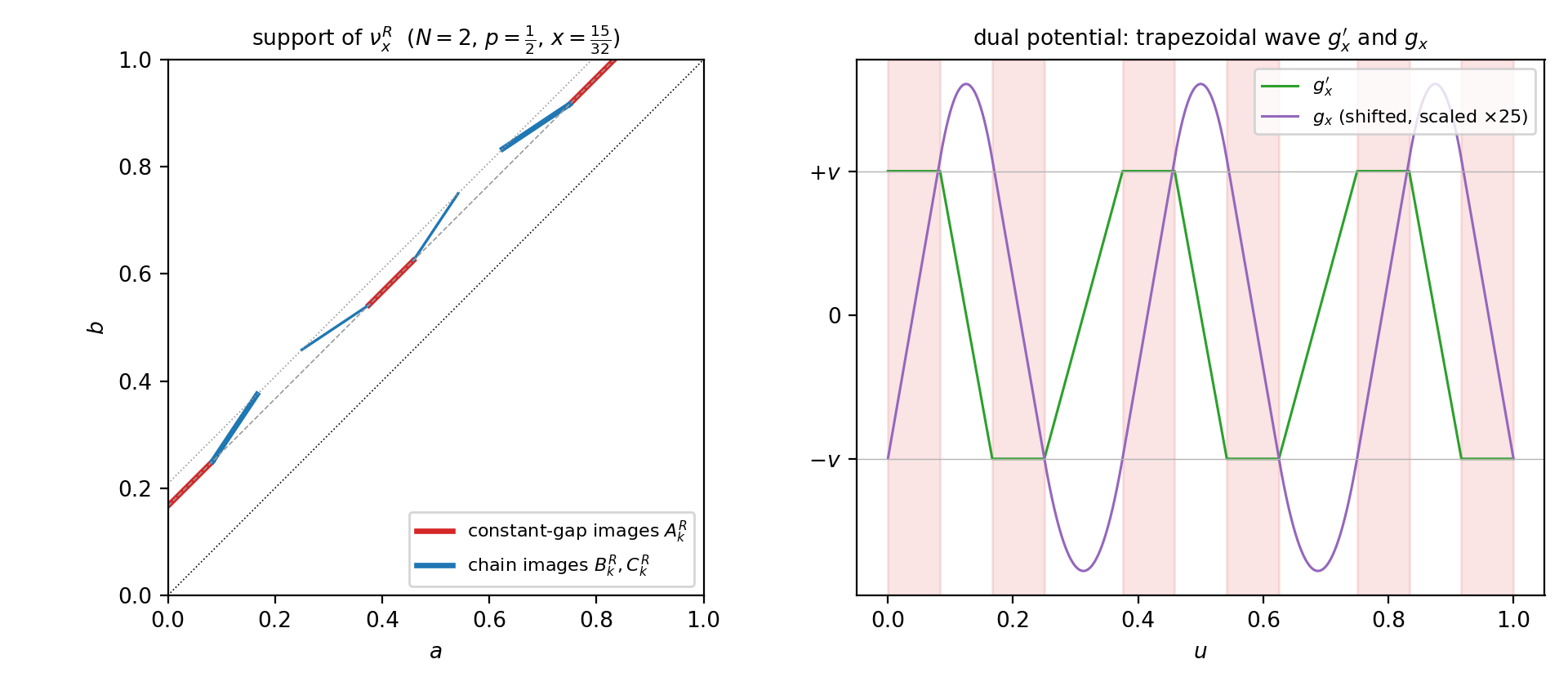}
	\caption{Left: support of the measure \(\nu_x=\nu_x^R\) from \eqref{def_nu_x} and \eqref{def_nu_xR}, for \(N=2\) and \(p=\tfrac12\) (equivalently, \(x=\tfrac{15}{32}\)), inside \(T=\{(a,b):a\le b\}\).
	The red segments are the images of the maps \(A_k^R\) and have constant difference \(b-a=R\).
	The blue segments are the images of \(B_k^R\) and \(C_k^R\); their differences \(b-a\) fill the interval \([R,R+v]\).
	Thus, projection by \(q(a,b)=b-a\) yields the right-half distance law in Lemma~\ref{lem_U-V}.
	Grey lines mark \(b-a=R\) (dashed) and \(b-a=R+v\) (dotted).
	Right: the trapezoidal derivative \(g_x'\) and the \(\theta\)-periodic dual potential \(g_x\) from Definition~\ref{def:dual-potential}; the shaded intervals are the first- and second-coordinate ranges of the constant-difference segments.}
	\label{fig:construction}
\end{figure}

\begin{figure}[p!]
	\centering
	\includegraphics[width=\textwidth]{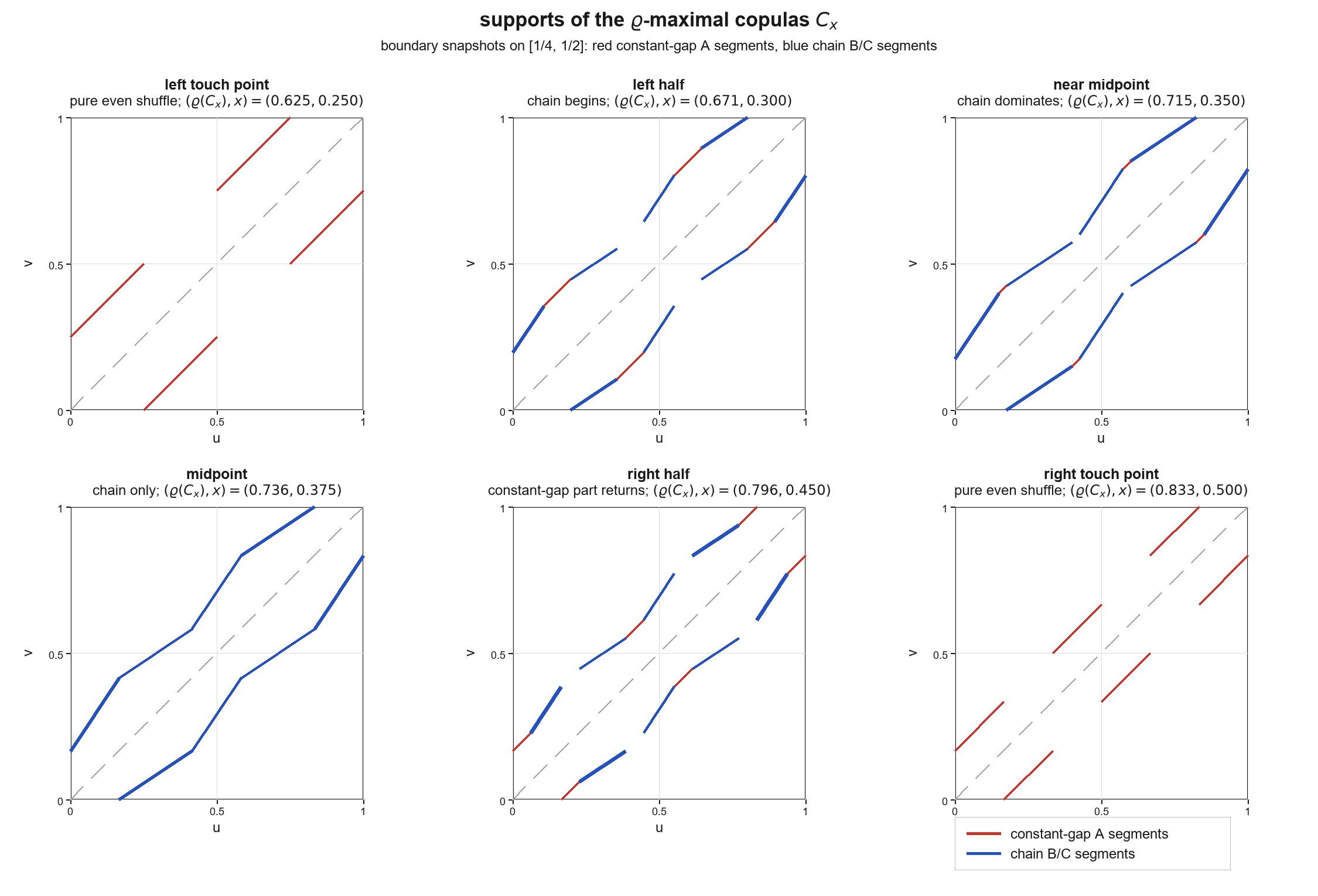}
	\caption{Six supports of \(\pi_x=\nu_x+S_\#\nu_x\), the joint distributions associated with the \(\varrho\)-maximal copulas \(C_x\), along the boundary segment \(x\in[\tfrac14,\tfrac12]\).
	Red denotes the images of the maps \(A_k^L\) or \(A_k^R\), whereas blue denotes the images of \(B_k^L,C_k^L\) or \(B_k^R,C_k^R\), as defined in \eqref{def:A_kL}--\eqref{def:C_kR}, together with their reflections under \(S\).
	On the red segments, \(b-a\) is constant.
	At the two touch points \(x=\tfrac14\) and \(x=\tfrac12\), the supports are equidistant even shuffles.
	Toward the midpoint, the parameter \(v\) increases and the remaining mass \(p=1-d_Nv\) on the constant-difference segments decreases; see \eqref{eq:dN} and \eqref{def:vptheta}.
	At the midpoint \(x=\tfrac38\), we have \(v=L-R=1/d_N\) and hence \(p=0\), so only the blue segments remain.}
	\label{fig:copulas}
\end{figure}

For proving optimality of \(C_x\), we begin by showing that \(\nu_x\) defined in \eqref{def_nu_x} is in \(\Pi_T\).

\begin{lemma}\label{lem_nux_in_PiT}
For every \(x\in [-\frac 1 2 ,1)\), we have \(\nu_x\in \Pi_T\).
\end{lemma}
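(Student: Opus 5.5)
We prove the statement by a direct computation of the marginals: \(\nu_x\) is a finite sum of pushforwards of restricted Lebesgue measures under affine maps, so its marginals \(\nu_1,\nu_2\) are sums of uniform densities on explicit intervals. It then remains to check two things. First, every map \(A_k,B_k,C_k\) takes values in \(T\), i.e.\ \(0\le a\le b\le 1\). Second, the intervals carrying the first-coordinate and second-coordinate images tile \([0,1]\) with total density \(1\), which gives \(\nu_1+\nu_2=\lam\). We treat \(x\in I_N^L\) and \(x\in I_N^R\) separately; the two cases are analogous.

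We use three elementary identities, all coming from \(d_N=2N(N+1)\), \(d_NL=N+1\), \(d_NR=N\) and \(p=1-d_Nv\):
\[
    pL = L-(N+1)v,
    \qquad
    pR = R-Nv,
    \qquad
    \alpha_k+\beta_k = 1+\tfrac1N .
\]
The pushforward densities follow from the slopes of the maps. An image coordinate of the form \(c+t\) carries density equal to its weight. An image coordinate of the form \(c+t+\ell(t)=c'+t(1+\tfrac1N)\) carries density equal to its weight times \(\tfrac{N}{N+1}\).

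\emph{Left case.} Here \(\theta=2L-v\). For the \(k\)-th block we list the images in increasing order.
\begin{enumerate}[label=(\arabic*)]
    \item On \([k\theta,\,k\theta+Nv]\) lie the first coordinates of \(B_k^L\), with density \(\alpha_k\).
    \item On \([k\theta+Nv,\,k\theta+Nv+pL]=[k\theta+Nv,\,k\theta+L-v]\) lie the first coordinates of \(A_k^L\), with density \(1\).
    \item On \([k\theta+L-v,\,k\theta+L+Nv]\) two images overlap: the second coordinates of \(B_k^L\) and the first coordinates of \(C_k^L\). Their densities add to \((\alpha_k+\beta_k)\tfrac{N}{N+1}=1\).
    \item On \([k\theta+L+Nv,\,k\theta+2L-v]=[k\theta+L+Nv,\,(k+1)\theta]\) lie the second coordinates of \(A_k^L\), with density \(1\).
    \item On \([(k+1)\theta,\,(k+1)\theta+Nv]\) lie the second coordinates of \(C_k^L\), with density \(\beta_k\). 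For \(k\le N-2\), this interval coincides with item (1) for block \(k+1\), where \(B_{k+1}^L\) contributes density \(\alpha_{k+1}\). Since \(\beta_k+\alpha_{k+1}=1\), the total density there is \(1\).
\end{enumerate}
At the two ends the bookkeeping closes as well. On \([0,Nv]\) only \(B_0^L\) contributes, and \(\alpha_0=1\). On the last interval only \(C_{N-1}^L\) contributes, and \(\beta_{N-1}=1\). Its right endpoint is \(N\theta+Nv=2NL=1\). Hence the whole of \([0,1]\) is covered exactly once with density \(1\), i.e.\ \(\nu_1+\nu_2=\lam\).

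The inclusion \(\nu_x(T^c)=0\) is immediate. Each map adds a nonnegative quantity to the first coordinate: \(L\), \(\ell_L(t)\ge L-v\ge0\), or \(\theta-\ell_L(t)\ge0\). Moreover, all coordinates lie in \([0,1]\) by the endpoint computation above.

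\emph{Right case.} Here \(\theta=2R+v\), and there are \(N+1\) segments \(A_k^R\). The same chain of intervals appears in a shifted order. Block \(k\) starts with the first coordinates of \(A_k^R\) on \([k\theta,\,k\theta+pR]\). It then continues with the first coordinates of \(B_k^R\), the overlap of \(C_k^R\)-first with \(B_k^R\)-second, and the second coordinates of \(A_{k+1}^R\)-type pieces, and so on. The consistency of the interval lengths now relies on \(pR+Nv=R\) in place of \(pL+Nv=L-v\). The final endpoint is checked analogously, namely \(N\theta+pR+R\) (or the corresponding expression) equals \(2(N+1)R=1\).

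We expect the main obstacle to be exactly this bookkeeping in the right case. Because of the extra \(A_N^R\) segment and the offset \(pR\), one must verify carefully which images abut or overlap, and confirm that the first coordinates of \(A_{k+1}^R\) meet the second coordinates of \(C_k^R\) and \(A_k^R\) without gaps or double counting. We would handle it by writing out the case \(N=1\) explicitly, using Figure~\ref{fig:construction} as a guide. After that, the general \(k\) follows by translation by \(\theta\), since all maps are of the form \(k\theta+(\text{fixed map})\).
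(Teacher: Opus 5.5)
Your approach is the paper's: push each segment forward coordinatewise, check that the densities add to \(1\) on a tiling of \([0,1]\), and read off \(\nu_x(T^c)=0\) from the coordinate differences. Your left case is correct and coincides with the paper's decomposition \(K_{k,1},\dots,K_{k,4}\), including the cancellations \(\beta_{k-1}+\alpha_k=1\) and \((\alpha_k+\beta_k)\tfrac{N}{N+1}=1\) and the closure \(N\theta+Nv=1\).

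You never check \(p\ge0\). This is needed for \(\nu_x\) to be a nonnegative measure and for your intervals to be ordered; for instance, \(Nv\le L-v\) is exactly \(pL\ge0\). It follows from \(v\le L-R=1/d_N\). That bound holds because \(m\) lies in the half of \([R,L]\) nearest the chosen endpoint, so \(v^2\le (L-R)/d_N=(L-R)^2\).

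\textbf{The main gap: the right case is not proved.} The case \(x\in I_N^R\), half the statement, is only asserted to be analogous, and the layout you sketch for it is wrong.
\begin{enumerate}[label=(\arabic*)]
\item After the first coordinates of \(B_k^R\) come the second coordinates of \(A_k^R\), not the \(B\)/\(C\) overlap.
\item No \(A_{k+1}^R\) pieces occur inside block \(k\).
\item You omit the overlap of the second coordinates of \(C_{k-1}^R\) with the first coordinates of \(B_k^R\). This is precisely where \(\beta_{k-1}+\alpha_k=1\) is used.
\end{enumerate}

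\textbf{The correct right-case decomposition.} Using \(pR+Nv=R\) and \(\theta=2R+v\), the block \([k\theta,(k+1)\theta]\) splits into four pieces.
\begin{enumerate}[label=(\roman*)]
\item \([k\theta,k\theta+pR]\): first coordinates of \(A_k^R\), density \(1\).
\item \([k\theta+pR,k\theta+R]\): first coordinates of \(B_k^R\) with density \(\alpha_k\), plus for \(k\ge1\) second coordinates of \(C_{k-1}^R\) with density \(\beta_{k-1}\).
\item \([k\theta+R,k\theta+R+pR]\): second coordinates of \(A_k^R\), density \(1\).
\item \([k\theta+R+pR,(k+1)\theta]\): second coordinates of \(B_k^R\) and first coordinates of \(C_k^R\), combined density \((\alpha_k+\beta_k)\tfrac{N}{N+1}=1\). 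The right endpoint is \((k+1)\theta\) because \(pR+R+(N+1)v=2R+v=\theta\).
\end{enumerate}
The terminal interval \([N\theta,1]\) carries the two coordinate images of the extra segment \(A_N^R\), namely \([N\theta,N\theta+pR]\) and \([N\theta+R,1]\). Between them lies \([N\theta+pR,N\theta+R]\), covered by the second coordinates of \(C_{N-1}^R\) with \(\beta_{N-1}=1\). Closure follows from \(N\theta+R+pR=2(N+1)R=1\).

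Your proposed remedy of working out \(N=1\) and translating by \(\theta\) would not suffice. The supports translate, but the weights \(\alpha_k,\beta_k\) do not. For \(N=1\) the inter-block cancellation \(\beta_{k-1}+\alpha_k=1\) never occurs, and general \(N\) is not covered at all.

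With the decomposition above written out for general \(k\) and \(N\), and the check \(p\ge0\) added, your argument is complete and identical to the paper's.
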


\begin{proof}
Recall that \(N\in\mathbb N\) is chosen such that \(x\in I_N\).
Further, recall \(v\), \(p\), \(\theta\) from \eqref{def:vptheta}.
By the definitions of \(I_N^L\) and \(I_N^R\) in \eqref{def:I_NL-I_NR}, we have \(0\leq v\leq L-R=\frac1{d_N}\).
Consequently,
\[
    0\leq p=1-d_Nv\leq1.
\]
Moreover, \(\alpha_k=1-\frac{k}{N}\geq 0\) and \(\beta_k=\frac{k+1}{N}\geq 0 \) for \(k=0,\ldots,N-1\).
Thus, \(\nu_x\) is a finite nonnegative Borel measure.

It remains to show that \(\nu_x\) is supported on \(T\) and satisfies
\begin{align}\label{prop_proj_nu_x}
    (\operatorname{proj}_1)_\#\nu_x
    +
    (\operatorname{proj}_2)_\#\nu_x
    =
    \lam.
\end{align}

We treat the left and right parts separately.

\medskip
\noindent

\emph{Case 1 (the right-side part): } Let \(x\in I_N^R\).
Since \(d_NR=N\), we obtain \(pR=(1-d_Nv)R=R-Nv\), and therefore
\begin{equation}\label{eq:right-identities-proof}
    pR+Nv=R.
\end{equation}
Furthermore, using \(\theta=2R+v\), we have
\begin{align}
    N\theta+R+pR
    &=
    N(2R+v)+R+pR \notag\\
    &=
    2NR+Nv+R+pR \notag\\
    &=
    2(N+1)R
    =1.
    \label{eq:right-closure-proof}
\end{align}

The coordinate differences appearing in the definition of \(\nu_x^R\) are given by
\[
\begin{aligned}
    \operatorname{proj}_2(A_k^R(s))
    -
    \operatorname{proj}_1(A_k^R(s))
    &=R,\\
    \operatorname{proj}_2(B_k^R(t))
    -
    \operatorname{proj}_1(B_k^R(t))
    &=\ell_R(t)
      =R+\frac{t}{N},\\
    \operatorname{proj}_2(C_k^R(t))
    -
    \operatorname{proj}_1(C_k^R(t))
    &=\theta-\ell_R(t)
      =R+v-\frac{t}{N}.
\end{aligned}
\]
Hence all these lengths belong to the interval \([R,R+v]\) and are nonnegative.

Using \eqref{eq:right-identities-proof}, the coordinate images are
\[
\begin{aligned}
    (\operatorname{proj}_1\circ A_k^R)([0,pR])
    &=[k\theta,k\theta+pR],\\
    (\operatorname{proj}_2\circ A_k^R)([0,pR])
    &=[k\theta+R,k\theta+R+pR],\\
    (\operatorname{proj}_1\circ B_k^R)([0,Nv])
    &=[k\theta+pR,k\theta+R],\\
    (\operatorname{proj}_2\circ B_k^R)([0,Nv])
    &=[k\theta+R+pR,(k+1)\theta],\\
    (\operatorname{proj}_1\circ C_k^R)([0,Nv])
    &=[k\theta+R+pR,(k+1)\theta],\\
    (\operatorname{proj}_2\circ C_k^R)([0,Nv])
    &=[(k+1)\theta+pR,(k+1)\theta+R].
\end{aligned}
\]
Together with \eqref{eq:right-closure-proof}, this shows that all coordinates lie in \([0,1]\), and thus all mappings take values in \(T\).

To verify \eqref{prop_proj_nu_x}, set
\[
    \gamma_x^R
    :=
    (\operatorname{proj}_1)_\#\nu_x^R
    +
    (\operatorname{proj}_2)_\#\nu_x^R.
\]
We show that \(\gamma_x^R=\lam\).

We repeatedly use the following elementary change-of-variables fact: if \(J\) is an interval and \(g(t)=a+ct\), with \(c>0\), then the push-forward of \(w\,\lam\vert_J\) under \(g\) has density \(w/c\) on \(g(J)\).

For \(k=0,\ldots,N-1\), decompose
\[
    [k\theta,(k+1)\theta]
    =
    J_{k,1}\cup J_{k,2}\cup J_{k,3}\cup J_{k,4},
\]
where
\begin{align*}
    J_{k,1}&=[k\theta,k\theta+pR),\\
    J_{k,2}&=[k\theta+pR,k\theta+R),\\
    J_{k,3}&=[k\theta+R,k\theta+R+pR),\\
    J_{k,4}&=[k\theta+R+pR,(k+1)\theta].
\end{align*}
On \(J_{k,1}\), the first coordinate of \(A_k^R\) in \eqref{def:A_kR} has slope \(1\) and weight \(1\).
Since, on \(J_{k,1}\), only the first coordinate of \(A_k^R\) contributes,  \(\gamma_x^R\) has density \(1\) on \(J_{k,1}\).

On \(J_{k,2}\), the first coordinate of \(B_k^R\) contributes density \(\alpha_k\).
If \(k\geq1\), the second coordinate of \(C_{k-1}^R\) contributes density \(\beta_{k-1}\).
Since
\[
    \alpha_k+\beta_{k-1}
    =
    1-\frac{k}{N}+\frac{k}{N}
    =1,
\]
the total density is \(1\).
For \(k=0\), only \(B_0^R\) contributes, and \(\alpha_0=1\).
Hence, \(\gamma_x^R\) has density \(1\) on \(J_{k,2}\).

On \(J_{k,3}\), only the second coordinate of \(A_k^R\) contributes.
It has slope \(1\) and weight \(1\), and therefore \(\gamma_x^R\) has density \(1\) on \(J_{k,3}\).

Finally, \(J_{k,4}\) is covered by the second coordinate of \(B_k^R\) and the first coordinate of \(C_k^R\).
Both maps have slope
\[
    1+\ell_R'(t)
    =
    1+\frac1N
    =
    \frac{N+1}{N}.
\]
Hence their combined density equals
\[
    \frac{\alpha_k+\beta_k}{(N+1)/N}
    =
    \frac{
        1-\frac{k}{N}+\frac{k+1}{N}
    }{(N+1)/N}
    =1.
\]

It remains to consider the terminal interval \([N\theta,1]\).
By \eqref{eq:right-closure-proof},
\[
    [N\theta,1]
    =
    [N\theta,N\theta+pR]
    \cup
    [N\theta+pR,N\theta+R]
    \cup
    [N\theta+R,1].
\]
The first and third intervals are covered by the two coordinate projections of \(A_N^R\), each with density \(1\).
The middle interval is covered by the second coordinate of \(C_{N-1}^R\), whose weight is
\(
    \beta_{N-1}=1.
\)
Thus, \(\gamma_x^R\) has density \(1\) on all of \([0,1]\), and therefore \((\operatorname{proj}_1)_\#\nu_x^R + (\operatorname{proj}_2)_\#\nu_x^R = \lam. \) follows.

\medskip
\noindent
\emph{Case 2 (the left-side part): } Let \(x\in I_N^L\).
Since \(d_NL=N+1\), we have \(pL=(1-d_Nv)L=L-(N+1)v\), and hence
\begin{align}\label{eq:left-identities-proof}
    Nv+pL=L-v.
\end{align}
Moreover, using \(\theta=2L-v\), we have
\begin{equation}\label{eq:left-closure-proof}
    N\theta+Nv
    =
    N(2L-v)+Nv
    =
    2NL
    =
    1.
\end{equation}

The coordinate differences of the pairs appearing in \(\nu_x^L\) are
\[
\begin{aligned}
    \operatorname{proj}_2(A_k^L(s))
    -
    \operatorname{proj}_1(A_k^L(s))
    &=L,\\
    \operatorname{proj}_2(B_k^L(t))
    -
    \operatorname{proj}_1(B_k^L(t))
    &=\ell_L(t)
      =L-v+\frac{t}{N},\\
    \operatorname{proj}_2(C_k^L(t))
    -
    \operatorname{proj}_1(C_k^L(t))
    &=\theta-\ell_L(t)
      =L-\frac{t}{N}.
\end{aligned}
\]
Thus all differences belong to the interval \([L-v,L]\) and are non-negative.

By \eqref{eq:left-identities-proof}, the coordinate images are
\[
\begin{aligned}
    \operatorname{proj}_1\circ B_k^L([0,Nv])
    &=[k\theta,k\theta+Nv],\\
    \operatorname{proj}_1\circ A_k^L([0,pL])
    &=[k\theta+Nv,k\theta+L-v],\\
    \operatorname{proj}_2\circ B_k^L([0,Nv])
    &=[k\theta+L-v,k\theta+L+Nv],\\
    \operatorname{proj}_1\circ C_k^L([0,Nv])
    &=[k\theta+L-v,k\theta+L+Nv],\\
    \operatorname{proj}_2\circ A_k^L([0,pL])
    &=[k\theta+L+Nv,(k+1)\theta],\\
    \operatorname{proj}_2\circ C_k^L([0,Nv])
    &=[(k+1)\theta,(k+1)\theta+Nv].
\end{aligned}
\]
Together with \eqref{eq:left-closure-proof}, this shows that all coordinates lie in \([0,1]\), and hence all mappings take values in \(T\).

To verify \eqref{prop_proj_nu_x}, define
\[
    \gamma_x^L
    :=
    (\operatorname{proj}_1)_\#\nu_x^L
    +
    (\operatorname{proj}_2)_\#\nu_x^L.
\]
For \(k=0,\ldots,N-1\), decompose
\[
    [k\theta,(k+1)\theta]
    =
    K_{k,1}\cup K_{k,2}\cup K_{k,3}\cup K_{k,4},
\]
where
\[
\begin{aligned}
    K_{k,1}&=[k\theta,k\theta+Nv),\\
    K_{k,2}&=[k\theta+Nv,k\theta+L-v),\\
    K_{k,3}&=[k\theta+L-v,k\theta+L+Nv),\\
    K_{k,4}&=[k\theta+L+Nv,(k+1)\theta].
\end{aligned}
\]

On \(K_{k,1}\), the first coordinate of \(B_k^L\) contributes density \(\alpha_k\).
For \(k\geq1\), the second coordinate of \(C_{k-1}^L\) contributes density \(\beta_{k-1}\).
Hence the total density is
\(
    \alpha_k+\beta_{k-1} = \left(1 - \frac k N \right) + \frac k N =1.
\)
For \(k=0\), only \(B_0^L\) contributes, and \(\alpha_0=1\).

\noindent On \(K_{k,2}\), the first coordinate of \(A_k^L\) has slope \(1\) and
weight \(1\), and therefore contributes density \(1\).

\noindent On \(K_{k,3}\), the second coordinate of \(B_k^L\) and the first
coordinate of \(C_k^L\) both have slope
\[
    1+\ell_L'(t)
    =
    \frac{N+1}{N}.
\]
Their combined density is
\[
    \frac{\alpha_k+\beta_k}{(N+1)/N}=1.
\]

\noindent On \(K_{k,4}\), the second coordinate of \(A_k^L\) has slope \(1\) and
weight \(1\), and hence contributes density \(1\).

\noindent Finally, by \eqref{eq:left-closure-proof}, the remaining terminal
interval is
\[
    [N\theta,1]=[N\theta,N\theta+Nv].
\]
It is covered by the second coordinate of \(C_{N-1}^L\), whose weight is
\(
    \beta_{N-1}=1.
\)
Consequently, \(\gamma_x^L\) has density \(1\) on \([0,1]\), and thus
\(
    (\operatorname{proj}_1)_\#\nu_x^L
    +
    (\operatorname{proj}_2)_\#\nu_x^L
    =
    \lam.
\)
\end{proof}

Next, we show that \(\nu_x\) is feasible.

\begin{lemma}[Feasibility of \(\nu_x\)]
\label{lem:primal-feasibility}
For every \(x\in[-\frac12,1)\), the measure \(\nu_x\) is feasible for \(m=\frac{1-x}{3}\), i.e.,
\[
    \int_T(b-a)\,\de\nu_x(a,b)=\frac{m}{2}.
\]
\end{lemma}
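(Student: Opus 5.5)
The plan is to compute \(\int_T(b-a)\,\de\nu_x\) directly, component by component, using the coordinate differences already recorded in the proof of Lemma~\ref{lem_nux_in_PiT}. I treat the two cases \(x\in I_N^R\) and \(x\in I_N^L\) separately. In each case I write the integral as the sum of three contributions: the constant-difference segments \(A_k\), the \(B_k\) segments weighted by \(\alpha_k\), and the \(C_k\) segments weighted by \(\beta_k\). The key simplification is that \(B_k\) and \(C_k\) are integrated against the same \(\lam\vert_{[0,Nv]}\), and their differences \(\ell(t)\) and \(\theta-\ell(t)\) are affine in \(t\). Hence
\[
\int_0^{Nv}\ell(t)\de t=Nv\,\bar\ell,\qquad \int_0^{Nv}(\theta-\ell(t))\de t=Nv(\theta-\bar\ell),
\]
where \(\bar\ell\) is the average of \(\ell\) over \([0,Nv]\).

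\emph{Right case.} Here \(\bar\ell=R+\tfrac v2\) and \(\theta-\bar\ell=R+\tfrac v2\), so both averages coincide. The weights satisfy \(\sum_{k=0}^{N-1}(\alpha_k+\beta_k)=N+1\). The blue contribution is therefore \((N+1)Nv(R+\tfrac v2)\). The red contribution is \((N+1)pR\cdot R\). Using \(pR=R-Nv\) from \eqref{eq:right-identities-proof}, the total becomes
\[
(N+1)R\bigl(R-Nv+Nv\bigr)+(N+1)N\tfrac{v^2}{2}=(N+1)R^2+\tfrac{d_N}{4}v^2 .
\]
Since \((N+1)R^2=\tfrac R2\) and \(v^2=2(m-R)/d_N\), this equals \(\tfrac R2+\tfrac{m-R}{2}=\tfrac m2\).

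\emph{Left case.} Here \(\bar\ell=L-\tfrac v2\) and \(\theta-\bar\ell=L-\tfrac v2\), and there are \(N\) red segments. The total is
\[
NpL\cdot L+(N+1)Nv\bigl(L-\tfrac v2\bigr).
\]
Substituting \(pL=L-(N+1)v\) gives \(NL^2-\tfrac{d_N}{4}v^2\). Since \(NL^2=\tfrac L2\) and \(v^2=2(L-m)/d_N\), this equals \(\tfrac L2-\tfrac{L-m}{2}=\tfrac m2\).

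The argument involves no real obstacle; the only care needed is bookkeeping. Two points must be checked. First, the masses are correct: for instance, \((A_k)_\#\lam\vert_{[0,pR]}\) has mass \(pR\), and no Jacobian factor enters, because the pushforward of a measure preserves its total mass. Second, the weight sums are \(\sum_k\alpha_k=\tfrac{N+1}{2}\) and \(\sum_k\beta_k=\tfrac{N+1}{2}\). The identities \(d_NR=N\) and \(d_NL=N+1\) do the rest.
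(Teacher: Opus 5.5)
Your proposal is correct and follows essentially the paper's route: a direct segment-by-segment computation of the integral, using the recorded coordinate differences, the weight sums \(\sum_k\alpha_k=\sum_k\beta_k=\tfrac{N+1}{2}\), and the identities \(d_NR=N\) and \(d_NL=N+1\). The only cosmetic difference is in how the \(B_k\) and \(C_k\) contributions are combined. You pair them through the common average \(\bar\ell=\theta-\bar\ell=\theta/2\), whereas the paper uses the equal weight sums to obtain \(\tfrac{N+1}{2}\int_0^{Nv}\theta\,\de t\); both yield the same quantity \((N+1)Nv\,\theta/2\).
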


\begin{proof}
Recall the definition of the weights \(\alpha_k = 1 - \frac k N\) and \(\beta_k = \frac{k+1}{N}\) occurring in \eqref{def_nu_xL} and \eqref{def_nu_xR}.
Then we have
\[
    \sum_{k=0}^{N-1}\alpha_k
    =
    \sum_{k=0}^{N-1}\beta_k
    =
    \frac{N+1}{2}.
\]

First, let \(x\in I_N^R\).
The coordinate differences are generated by \(A_k^R\), \(B_k^R\), and \(C_k^R\) are \(R\), \(\ell_R(t)\), and \(\theta-\ell_R(t)\), respectively.
Consequently,
\begin{align*}
    \int_T(b-a)\,\de\nu_x^R(a,b)
    &=
    \sum_{k=0}^{N}\int_0^{pR}R\,\de s
    +
    \sum_{k=0}^{N-1}\alpha_k
    \int_0^{Nv}\ell_R(t)\,\de t
    +\sum_{k=0}^{N-1}\beta_k
    \int_0^{Nv}\bigl(\theta-\ell_R(t)\bigr)\,\de t\\
    &=
    (N+1)pR^2
    +
    \frac{N+1}{2}
    \int_0^{Nv}\theta\,\de t 
    =
    \frac{pR}{2}
    +
    \frac{d_N\theta v}{4},
\end{align*}
where we use \((N+1)R=\frac12\) and \(d_N=2N(N+1)\) by \eqref{def:RL} and \eqref{eq:dN}.
Since, by \eqref{def:vptheta}, \(p=1-d_Nv\) and \(\theta=2R+v\), we obtain
\begin{align*}
    \frac{pR}{2}+\frac{d_N\theta v}{4}
    &=
    \frac12
    \left(
        pR+\frac{d_N\theta v}{2}
    \right)
    =
    \frac12
    \left(
        R-d_NRv+d_NRv+\frac{d_Nv^2}{2}
    \right)
    =
    \frac12
    \left(
        R+\frac{d_Nv^2}{2}
    \right).
\end{align*}
By definition of \(v(x)\) on \(I_N^R\) in \eqref{def:vptheta}, it follows that \(m(x)=R+\frac{d_Nv^2}{2}\).
Hence,
\[
    \int_T(b-a)\,\de\nu_x^R(a,b)
    =
    \frac{m(x)}{2}.
\]

Now let \(x\in I_N^L\).
The differences of the coordinates generated by \(A_k^L\), \(B_k^L\), and \(C_k^L\) are \(L\), \(\ell_L(t)\), and \(\theta-\ell_L(t)\), respectively.
Thus,
\begin{align*}
    \int_T(b-a)\,\de\nu_x^L(a,b)
    &=
    \sum_{k=0}^{N-1}\int_0^{pL}L\,\de s
    +\sum_{k=0}^{N-1}\alpha_k
    \int_0^{Nv}\ell_L(t)\,\de t
    +
    \sum_{k=0}^{N-1}\beta_k
    \int_0^{Nv}\bigl(\theta-\ell_L(t)\bigr)\,\de t\\
    &=
    NpL^2
    +
    \frac{N+1}{2}
    \int_0^{Nv}\theta\,\de t
    =
    \frac{pL}{2}
    +
    \frac{d_N\theta v}{4},
\end{align*}
where we used \(NL=\frac12\).
Since \(p=1-d_Nv\) and \(\theta=2L-v\), it follows that
\begin{align*}
    \frac{pL}{2}+\frac{d_N\theta v}{4}
    &=
    \frac12
    \left(
        pL+\frac{d_N\theta v}{2}
    \right)
    =
    \frac12
    \left(
        L-d_NLv+d_NLv-\frac{d_Nv^2}{2}
    \right)
    =
    \frac12
    \left(
        L-\frac{d_Nv^2}{2}
    \right).
\end{align*}
By definition of \(v(x)\) on \(I_N^L\), it follows that \(m(x)=L-\frac{d_Nv^2}{2}\).
Therefore,
\[
    \int_T(b-a)\,\de\nu_x^L(a,b)
    =
    \frac{m(x)}{2}.
\]
This proves the assertion.
\end{proof}

To calculate Spearman's rho and Spearman's footrule for \(C_x\), we determine the distribution of \(|U-V|\) for \((U,V)\sim C_x\).
We denote by \(\delta_y\) the Dirac measure in \(y\) and write \(\lam\vert_I\) for the restriction of Lebesgue measure to an interval \(I\).

\begin{lemma}\label{lem_U-V}
    Consider the coupling \(\pi_x\) defined in \eqref{def:pi_x}.
    For \((U,V)\sim \pi_x\), we have
    \begin{align}\label{eq:distance-law-right}
    \mathcal L_{\pi_x}(|U-V|)
    = \begin{cases}
        p\delta_R+d_N\lam\vert_{[R,R+v]}, &\text{if } x\in I_N^R,\\
        p\delta_L+d_N\lam\vert_{[L-v,L]}, &\text{if } x\in I_N^L,
    \end{cases}
\end{align}
where \(R\), \(L\), \(I_N^R\), \(I_N^L\), \(d_N\), \(p\), and \(v\) are defined in \eqref{def:RL}--\eqref{def:vptheta}.
\end{lemma}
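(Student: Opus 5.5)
Since $\pi_x=\nu_x+S_\#\nu_x$ and $|U-V|$ is invariant under the swap $S$, the law of $|U-V|$ under $\pi_x$ is twice the pushforward of $\nu_x$ under $q(a,b)\coloneqq b-a$. The reason is that $\int h(|b-a|)\,\de\pi_x=2\int_T h(b-a)\,\de\nu_x$ for bounded Borel $h$, exactly as in the proof of Lemma~\ref{prop:pairlp}. So it suffices to compute $q_\#\nu_x$ and show it equals $\tfrac p2\delta_R+\tfrac{d_N}{2}\lam\vert_{[R,R+v]}$ in the right case, and analogously in the left case. The coordinate differences of the maps $A_k$, $B_k$, $C_k$ were already listed in the proof of Lemma~\ref{lem_nux_in_PiT}, so the computation is a direct pushforward of each summand in \eqref{def_nu_xL} and \eqref{def_nu_xR}.

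\emph{Right case} $x\in I_N^R$.
\begin{itemize}
\item The maps $A_k^R$, $k=0,\dots,N$, have constant difference $R$. Each contributes mass $pR$ at $R$, for a total atom $(N+1)pR=\tfrac p2$, using $(N+1)R=\tfrac12$.
\item For $B_k^R$ the difference is $\ell_R(t)=R+t/N$, $t\in[0,Nv]$, with slope $1/N$. Hence $(q\circ B_k^R)_\#\lam\vert_{[0,Nv]}$ has density $N$ on $[R,R+v]$.
\item For $C_k^R$ the difference is $\theta-\ell_R(t)=R+v-t/N$, which also sweeps $[R,R+v]$ with density $N$.
\end{itemize}
With the weights $\sum_k(\alpha_k+\beta_k)=N+1$, the absolutely continuous part has density $N(N+1)=\tfrac{d_N}{2}$ on $[R,R+v]$. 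Doubling gives $p\delta_R+d_N\lam\vert_{[R,R+v]}$.

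\emph{Left case} $x\in I_N^L$.
\begin{itemize}
\item The maps $A_k^L$, $k=0,\dots,N-1$, have difference $L$ and total mass $NpL=\tfrac p2$, using $NL=\tfrac12$.
\item For $B_k^L$ the difference is $\ell_L(t)=L-v+t/N$, and for $C_k^L$ it is $\theta-\ell_L(t)=L-t/N$. Both sweep $[L-v,L]$ with density $N$.
\end{itemize}
Again the combined weight $N+1$ yields density $\tfrac{d_N}{2}$, and doubling gives the claim.

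As a consistency check, the total mass is $p+d_Nv=1$. There is no real obstacle here. The only care needed is the bookkeeping of how many $A$-maps there are ($N+1$ versus $N$) together with the identities $(N+1)R=NL=\tfrac12$, and the factor $2$ coming from symmetrization.
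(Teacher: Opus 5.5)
Your proposal is correct and follows essentially the same route as the paper: you write the law of $|U-V|$ as $2q_\#\nu_x$ using the swap invariance of $|b-a|$, and then push forward each family of segments, with $(N+1)R=NL=\tfrac12$ giving the atom of mass $\tfrac p2$ and $\sum_k\alpha_k=\sum_k\beta_k=\tfrac{N+1}{2}$ giving the density $N(N+1)=\tfrac{d_N}{2}$ on the continuous part. The paper carries out the same pushforward computation by integrating a bounded test function $h$ and changing variables.
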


\begin{proof}
    Define the mappings
\[
    q:T\to[0,1],
    \qquad
    q(a,b):=b-a,
\]
and
\[
    d:[0,1]^2\to[0,1],
    \qquad
    d(a,b):=|b-a|.
\]
Since \(\nu_x\) has support in \(T\), we have \(d=q\) on the support of \(\nu_x\).
Moreover, \(d\circ S=d\).
Hence,
\begin{align}
    d_\#\pi_x
    =
    d_\#\nu_x+d_\#(S_\#\nu_x)
    =
    q_\#\nu_x+(d\circ S)_\#\nu_x
    =
    2q_\#\nu_x.\label{eq:distance-law-pi-x}
\end{align}
Thus, \(2q_\#\nu_x\) is the distribution of \(|U-V|\).
We determine this distribution separately for the two parts of \(I_N\).

\medskip
\noindent
\emph{Case 1: \(x\in I_N^R\).}
Recall that the coordinate differences of the pairs generated by \(A_k^R\), \(B_k^R\), and \(C_k^R\) are \(R\), \(\ell_R(t)=R+\frac{t}{N}\), and \(\theta-\ell_R(t)=R+v-\frac{t}{N}\), respectively.
Therefore, for every bounded Borel function \(h:[0,1]\to\mathbb R\),
\begin{align*}
    \int_{[0,1]}h(r)\,\de(q_\#\nu_x^R)(r)
    &=
    \sum_{k=0}^{N}\int_0^{pR}h(R)\,\de s\\
    &\quad+
    \sum_{k=0}^{N-1}\alpha_k
    \int_0^{Nv}h(R+\tfrac{t}{N})\,\de t\\
    &\quad+
    \sum_{k=0}^{N-1}\beta_k
    \int_0^{Nv}h(R+v-\tfrac{t}{N})\,\de t.
\end{align*}
Using
\(
    (N+1)R=\frac12
\)
and
\(
    \sum_{k=0}^{N-1}\alpha_k
    =
    \sum_{k=0}^{N-1}\beta_k
    =
    \frac{N+1}{2},
\)
together with the changes of variables
\(r=R+\frac{t}{N}\) and \(r=R+v-\frac{t}{N}\),
we obtain
\begin{align*}
    \int_{[0,1]}h(r)\,\de(q_\#\nu_x^R)(r)
    =
    \frac{p}{2}h(R)
    +
    N(N+1)\int_R^{R+v}h(r)\,\de r
    =
    \frac{p}{2}h(R)
    +
    \frac{d_N}{2}\int_R^{R+v}h(r)\,\de r.
\end{align*}
Consequently,
\[
    q_\#\nu_x^R
    =
    \frac{p}{2}\delta_R
    +
    \frac{d_N}{2}\lam\vert_{[R,R+v]}.
\]
By \eqref{eq:distance-law-pi-x}, the distribution of \(|U-V|\) is therefore
\[
    \mathcal L_{\pi_x}(|U-V|)
    = d_\#\pi_x =
    p\delta_R+d_N\lam\vert_{[R,R+v]}.
\]
\medskip
\noindent
\emph{Case 2: \(x\in I_N^L\).}
Recall that the coordinate differences of the pairs generated by \(A_k^L\), \(B_k^L\), and \(C_k^L\) are \(L\), \(\ell_L(t)=L-v+\frac{t}{N}\), and \(\theta-\ell_L(t)=L-\frac{t}{N}\), respectively.
Thus, for every bounded Borel function \(h\),
\begin{align*}
    \int_{[0,1]}h(r)\,\de(q_\#\nu_x^L)(r)
    &=
    \sum_{k=0}^{N-1}\int_0^{pL}h(L)\,\de s\\
    &\quad+
    \sum_{k=0}^{N-1}\alpha_k
    \int_0^{Nv}h(L-v+\tfrac{t}{N})\,\de t\\
    &\quad+
    \sum_{k=0}^{N-1}\beta_k
    \int_0^{Nv}h(L-\tfrac{t}{N})\,\de t.
\end{align*}
Since \(NL=\frac12\), the same change-of-variables argument as before gives \(q_\#\nu_x^L = \frac{p}{2}\delta_L + \frac{d_N}{2}\lam\vert_{[L-v,L]}. \) Therefore,
\begin{align*}
    \mathcal L_{\pi_x}(|U-V|)
    = d_\#\pi_x
    =
    p\delta_L+d_N\lam\vert_{[L-v,L]}.
\end{align*}
\end{proof}

\begin{proposition}[Spearman's \(\varrho\) and Spearman's footrule for \(C_x\)]\label{prop_C_x}
Let \(x\in [-\frac 1 2,1)\).
For the copula \(C_x\) in \eqref{def_C_x}, we have
    \begin{align*}
    \phi(C_x) &= x \qquad \text{and}\\
         \varrho(C_x) &= \begin{cases}
             1-6L(2m-L) - \frac{4(L-m)^{3/2}}{\sqrt{N (N+1)}}, & \text{for } x\in I_N^L,\\
             1-6R(2m-R) - \frac{4(m-R)^{3/2}}{\sqrt{N (N+1)}}, & \text{for } x\in I_N^R.
         \end{cases}
    \end{align*}
    At \(x=1\), one has \(\phi(C_1)=\varrho(C_1)=1\).
\end{proposition}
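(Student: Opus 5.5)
The plan is to compute both quantities directly from the law of \(Z=|U-V|\) obtained in Lemma~\ref{lem_U-V}, using the moment representation \eqref{eq_rep_phi_rho} from Lemma~\ref{lem:moments}. We have \(\phi(C_x)=1-3\E Z\) and \(\varrho(C_x)=1-6\E Z^2\). So it suffices to compute the first two moments of the measure \(p\delta_R+d_N\lam\vert_{[R,R+v]}\), respectively \(p\delta_L+d_N\lam\vert_{[L-v,L]}\). We then express the result in terms of \(m\) via the defining relation for \(v\) in \eqref{def:vptheta}.

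\emph{First moment.} It is immediate from Lemma~\ref{lem:primal-feasibility} combined with \eqref{eq:distance-law-pi-x}: \(\E Z=2\int_T(b-a)\de\nu_x=m\), hence \(\phi(C_x)=1-3m=x\). As an independent check, one can also compute it directly. In the right case, \(\E Z=pR+d_N(Rv+v^2/2)=R+d_Nv^2/2=m\), using \(p=1-d_Nv\).

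\emph{Second moment, right case.} We compute
\[
\E Z^2=pR^2+d_N\bigl(R^2v+Rv^2+v^3/3\bigr)=R^2+d_NRv^2+\tfrac{d_N}{3}v^3 .
\]
Substituting \(d_Nv^2=2(m-R)\) gives \(R^2+2R(m-R)=R(2m-R)\). Also \(v^3=\bigl(2(m-R)/d_N\bigr)^{3/2}\), so that
\[
\tfrac{d_N}{3}v^3=\tfrac{2\sqrt2}{3}\,\frac{(m-R)^{3/2}}{\sqrt{d_N}}=\tfrac{2}{3}\,\frac{(m-R)^{3/2}}{\sqrt{N(N+1)}},
\]
since \(d_N=2N(N+1)\). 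Multiplying by \(6\) yields \(\varrho(C_x)=1-6R(2m-R)-4(m-R)^{3/2}/\sqrt{N(N+1)}\), as claimed.

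\emph{Second moment, left case.} Symmetrically,
\[
\E Z^2=pL^2+d_N\bigl(L^2v-Lv^2+v^3/3\bigr)=L^2-d_NLv^2+\tfrac{d_N}{3}v^3 .
\]
Using \(d_Nv^2=2(L-m)\), the first two terms become \(L^2-2L(L-m)=L(2m-L)\), and the cubic term gives the same \(\tfrac23(L-m)^{3/2}/\sqrt{N(N+1)}\). This yields the stated formula.

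\emph{The case \(x=1\).} Here \(C_1=M\), so \(U=V\) and \(Z=0\). Hence \(\phi(C_1)=\varrho(C_1)=1\).

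There is no real obstacle. The only care needed is the bookkeeping in which \(p=1-d_Nv\) cancels the linear-in-\(v\) terms, and the simplification \(\sqrt{2/d_N}=1/\sqrt{N(N+1)}\).
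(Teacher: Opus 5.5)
Your proposal is correct and follows the paper's proof essentially verbatim. Both arguments compute \(\E|U-V|\) and \(\E(U-V)^2\) from the distance law in Lemma~\ref{lem_U-V}. Both use \(p=1-d_Nv\) to cancel the linear-in-\(v\) terms, and then \(d_Nv^2=2|m-\ell|\) together with \(d_N=2N(N+1)\) to reach the stated formulas. Both handle \(x=1\) via \(U=V\) almost surely. The only cosmetic difference is that you obtain \(\E|U-V|=m\) primarily from Lemma~\ref{lem:primal-feasibility} and \eqref{eq:distance-law-pi-x}, whereas the paper computes it directly from the distance law, as you also do for the right case as a check.
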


\begin{proof}
For \(x=1\), Definition~\ref{def_Cx} gives the comonotonicity copula, under which \(U=V\) almost surely; in this case, the assertion follows from Lemma~\ref{lem:moments}.
Let now \(x\in[-\tfrac12,1)\).
The joint distribution of \((U,V)\sim C_x\) is by definition of the copula \(C_x\) in \eqref{def_C_x} the coupling \(\pi_x=\nu_x+S_\#\nu_x\).
To determine \(\phi(C_x)\) and \(\varrho(C_x)\), we need to determine the first and second moment of \(|U-V|\).

\medskip
\noindent
\emph{Case 1: \(x\in I_N^R\).}
For the first moment, we obtain from Lemma \ref{lem_U-V} that
\begin{align*}
    \E|U-V|
    &=
    pR+d_N\int_R^{R+v}r\,\de r
    =
    (1-d_Nv)R
    +
    d_N\left(Rv+\frac{v^2}{2}\right)
    =
    R+\frac{d_Nv^2}{2}
    =
    m,
\end{align*}
where the last equality follows from the definition of \(v(x)\) on \(I_N^R\).
Hence,
\[
    \phi(C_x)
    =
    1-3\E|U-V|
    =
    1-3m
    =
    x.
\]
For the second moment, we obtain from Lemma \ref{lem_U-V} that
\begin{align*}
    \E(U-V)^2
    &=
    pR^2+d_N\int_R^{R+v}r^2\,\de r\\
    &=
    (1-d_Nv)R^2
    +
    d_N\left(
        R^2v+Rv^2+\frac{v^3}{3}
    \right)\\
    &=
    R^2+d_NR v^2+\frac{d_Nv^3}{3}
    =
    R(2m-R)+\frac{d_Nv^3}{3}.
\end{align*}
Since \(v=\sqrt{\frac{2(m-R)}{d_N}} \) and \(d_N=2N(N+1)\) (see \eqref{def:vptheta} and \eqref{eq:dN}), we have
\[
    \frac{d_Nv^3}{3}
    =
    \frac{2(m-R)^{3/2}}
         {3\sqrt{N(N+1)}}.
\]
Using
\(
    \varrho(C_x)=1-6\,\E(U-V)^2,
\)
we conclude that
\[
    \varrho(C_x)
    =
    1-6R(2m-R)
    -
    \frac{4(m-R)^{3/2}}{\sqrt{N(N+1)}}.
\]

\medskip
\noindent
\emph{Case 2: \(x\in I_N^L\).}
For the first moment, we obtain from Lemma \ref{lem_U-V} that
\begin{align*}
    \E|U-V|
    &=
    pL+d_N\int_{L-v}^{L}r\,\de r
    =
    (1-d_Nv)L
    +
    d_N\left(Lv-\frac{v^2}{2}\right)
    =
    L-\frac{d_Nv^2}{2}
    =
    m.
\end{align*}
Hence,
\[
    \phi(C_x)=1-3m=x.
\]
For the second moment, we again use Lemma \ref{lem_U-V} and obtain
\begin{align*}
    \E(U-V)^2
    &=
    pL^2+d_N\int_{L-v}^{L}r^2\,\de r\\
    &=
    (1-d_Nv)L^2
    +
    d_N\left(
        L^2v-Lv^2+\frac{v^3}{3}
    \right)\\
    &=
    L^2-d_NL v^2+\frac{d_Nv^3}{3}
    =
    L(2m-L)+\frac{d_Nv^3}{3}.
\end{align*}
Since \(v=\sqrt{\frac{2(L-m)}{d_N}}, \) it follows that
\[
    \frac{d_Nv^3}{3}
    =
    \frac{2(L-m)^{3/2}}
         {3\sqrt{N(N+1)}}.
\]
This yields
\[
    \varrho(C_x)
    =
    1-6L(2m-L)
    -
    \frac{4(L-m)^{3/2}}{\sqrt{N(N+1)}},
\]
which proves the assertion.

\end{proof}

\section{Dual potential and optimality}\label{sec:dual}

Up to this point, for every \(x\in[-\frac12,1)\), we have constructed a copula \(C_x\) that satisfies \(\phi(C_x)=x\), and we have derived an explicit expression for \(\varrho(C_x)\).
By definition of \(\overline{\varrho}(x)\), we necessarily have
\[
\varrho(C_x)\leq\overline{\varrho}(x).
\]
To prove equality, it remains to show that every copula \(C\) satisfying \(\phi(C)=x\) also fulfills
\[
\varrho(C)\leq\varrho(C_x).
\]
In this section, we establish the upper bound by constructing a dual feasible potential whose contact set contains the support of \(\nu_x\).
In Corollary \ref{cor_feas}, we established the contact-set optimality criterion for the centered dual potential.
We now construct, for each \(x\), an explicit dual potential matching the candidate measure \(\nu_x\).
It is more convenient to work with the non-centered potential \(g_x\), satisfying
\begin{align}\label{eq:noncentered-feasibility}
    g_x(a) + g_x(b) + \theta(b-a) \leq (b-a)^2 \quad \text{for all } (a,b)\in T;
\end{align}
recall \(T=\{(a,b)\in[0,1]^2:a\leq b\}\).
The corresponding centered potential is
\begin{align*}
    f_x = g_x + \theta m/2.
\end{align*}
Hence, \((f_x,\theta)\) is feasible for the centered dual problem if and only if \(g_x\) satisfies \eqref{eq:noncentered-feasibility}.

Now, fix \(x\in[-\frac12,1)\), choose \(N\in\mathbb N\) such that \(x\in I_N\), and recall the parameters
\[
    m=m(x),\qquad
    v=v(x),\qquad
    p=p(x),\qquad
    \theta=\theta(x)
\]
defined in \eqref{eq:mean-gap} and \eqref{def:vptheta}.
The definitions imply \(0\leq v\leq L-R\).
If \(x\in I_N^R\), then \(\theta=2R+v>v\).
If \(x\in I_N^L\), then \(\theta-v=2(L-v)\geq2R>0\).
Thus, in both cases, \(\theta>v\geq0\), and in particular \(\theta>0\).
Define the modified cost function
\begin{align}\label{def:k_x}
    k_x(r):=r^2-\theta r,
    \qquad r\in \R.
\end{align}
Then \eqref{eq:noncentered-feasibility} is equivalent to
\begin{align}\label{eq:gx-kx-feasibility}
    g_x(a)+g_x(b)
    \leq k_x(b-a)
    \qquad\text{for all }(a,b)\in T.
\end{align}
The choice of \(\theta\) is adapted to the construction of \(\nu_x\).
Specifically, we have
\begin{align}\label{prop_kr}
    k_x(\theta-r)=k_x(r)
    \qquad\text{for all }r\in\mathbb R.
\end{align}
We shall construct \(g_x\) such that
\begin{align}\label{eq:contact-support-goal}
    g_x(a)+g_x(b)=k_x(b-a)
    \qquad
    \nu_x\text{-almost surely},
\end{align}
while \eqref{eq:gx-kx-feasibility} holds globally on \(T\).
Then, by Corollary~\ref{cor_feas}, the two properties \eqref{eq:gx-kx-feasibility} and \eqref{eq:contact-support-goal} imply that \(\nu_x\) is optimal.
Recall \(v= v(x)\), \(p=p(x)\), and \(\theta = \theta(x)\) in \eqref{def:vptheta}.

\begin{definition}[Optimal dual potential]
\label{def:dual-potential}
For \(x\in[-\frac12,1)\), define \(g_x\) first on one period \([0,\theta]\) and then extend it \(\theta\)-periodically to \(\mathbb R\); we use the same symbol \(g_x\) for its restriction to \([0,1]\).
If \(x\in I_N^R\), put
\(
    c_x^R\coloneqq\frac{k_x(R)-vpR}{2}
\)
and define
\begin{align}\label{def:dual-potential-value-right}
    g_x(u)
    :=
    \begin{cases}
        c_x^R+vu,
        &0\leq u<pR,\\[1mm]
        \displaystyle
        c_x^R+vpR+v(u-pR)-\frac{(u-pR)^2}{N},
        &pR\leq u<R,\\[3mm]
        c_x^R+vpR-v(u-R),
        &R\leq u<R+pR,\\[1mm]
        \displaystyle
        c_x^R-v(u-R-pR)+\frac{(u-R-pR)^2}{N+1},
        &R+pR\leq u\leq\theta.
    \end{cases}
\end{align}
If \(x\in I_N^L\), put
\(
    c_x^L\coloneqq\frac{k_x(L)+vpL}{2}
\)
and define
\begin{align}\label{def:dual-potential-value-left}
    g_x(u)
    :=
    \begin{cases}
        \displaystyle
        c_x^L+vu-\frac{u^2}{N},
        &0\leq u\leq Nv,\\[3mm]
        c_x^L-v(u-Nv),
        &Nv\leq u\leq L-v,\\[1mm]
        \displaystyle
        c_x^L-vpL-v(u-L+v)+\frac{(u-L+v)^2}{N+1},
        &L-v\leq u\leq L+Nv,\\[3mm]
        c_x^L-vpL+v(u-L-Nv),
        &L+Nv\leq u\leq\theta.
    \end{cases}
\end{align}
\end{definition}

\begin{remark}
    The choice of \(c_x^R\) is equivalently characterized by
\begin{align}\label{eq:normalization-potential-right}
    g_x(0)+g_x(R)=k_x(R).
\end{align}
and \(c_x^L\) is determined by
\begin{align}\label{eq:normalization-potential-left}
    g_x(Nv)+g_x(Nv+L)=k_x(L).
\end{align}
\end{remark}

Some properties of \(g_x\) are given in the following Lemma; see also Figure \ref{fig:construction}.

\begin{lemma}
\label{lem:dual-potential-well-defined}
The function \(g_x\) in Definition~\ref{def:dual-potential} is \(\theta\)-periodic and continuously differentiable with derivative
\begin{align}\label{def:dual-potential-right}
    g_x'(u)
    =
    \begin{cases}
        v,
        & 0\leq u< pR,\\[1mm]
        \displaystyle
        v-\frac{2}{N}(u-pR),
        & pR\leq u< R,\\[3mm]
        -v,
        & R\leq u< R+pR,\\[1mm]
        \displaystyle
        -v+\frac{2}{N+1}(u-R-pR),
        & R+pR\leq u\leq\theta.
    \end{cases}
\end{align}
on \([0,\theta]\) for \(x\in I_N^R\), and
\begin{align}\label{def:dual-potential-left}
    g_x'(u)
    =
    \begin{cases}
        \displaystyle
        v-\frac{2}{N}u,
        & 0\leq u< Nv,\\[3mm]
        -v,
        & Nv\leq u < L-v,\\[1mm]
        \displaystyle
        -v+\frac{2}{N+1}(u-L+v),
        & L-v\leq u < L+Nv,\\[3mm]
        v,
        & L+Nv\leq u\leq\theta.
    \end{cases}
\end{align}
on \([0,\theta]\) for \(x\in I_N^L\).
\end{lemma}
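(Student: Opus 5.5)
The plan is a direct piecewise verification on one period \([0,\theta]\), followed by an argument that the matching conditions at the endpoints make the \(\theta\)-periodic extension continuously differentiable. \(\theta\)-periodicity itself holds by construction, since \(g_x\) is defined on \([0,\theta]\) and then extended periodically. What actually needs proof is:

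(i) each piece is differentiable with the stated derivative;

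(ii) \(g_x\) and \(g_x'\) are continuous at the interior breakpoints;

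(iii) \(g_x(0)=g_x(\theta)\) and \(g_x'(0)=g_x'(\theta)\), so the periodic extension has no jump in value or slope at the points \(k\theta\).

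Step (i) is immediate: each piece is an affine or quadratic polynomial in \(u\), and differentiating gives exactly \eqref{def:dual-potential-right} and \eqref{def:dual-potential-left}.

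For steps (ii) and (iii) I will only use the length identities from the proof of Lemma~\ref{lem_nux_in_PiT}.

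In the right case \(x\in I_N^R\) these are \(pR+Nv=R\) and \(\theta=2R+v\), so the last interval has length \(\theta-R-pR=(N+1)v\). The checks are:
\begin{itemize}
\item At \(u=R\): the derivative from the left is \(v-\tfrac2N\cdot Nv=-v\), matching the next piece. The value is \(c_x^R+vpR+Nv^2-Nv^2=c_x^R+vpR\).
\item At \(u=R+pR\): the value is \(c_x^R+vpR-vpR=c_x^R\).
\item At \(u=\theta\): the derivative is \(-v+\tfrac{2}{N+1}(N+1)v=v=g_x'(0)\). The value is \(c_x^R-(N+1)v^2+(N+1)v^2=c_x^R=g_x(0)\).
\end{itemize}

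In the left case \(x\in I_N^L\) the identities are \(pL=L-(N+1)v\) and \(\theta=2L-v\), so \(\theta-L-Nv=pL\). The checks are:
\begin{itemize}
\item At \(u=Nv\): the value is \(c_x^L+Nv^2-Nv^2=c_x^L\), and the slope is \(v-2v=-v\).
\item At \(u=L-v\): the value is \(c_x^L-v(L-(N+1)v)=c_x^L-vpL\).
\item At \(u=L+Nv\): the slope is \(-v+2v=v\), and the value is \(c_x^L-vpL-(N+1)v^2+(N+1)v^2=c_x^L-vpL\).
\item At \(u=\theta\): the value is \(c_x^L-vpL+vpL=c_x^L=g_x(0)\), and the slope is \(v=g_x'(0)\).
\end{itemize}

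Matching values and one-sided derivatives at every breakpoint, including the period junctions, yields \(C^1\) regularity on \(\mathbb R\).

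The only delicate points are the degenerate parameter values. When \(v=0\) (the touch points \(m\in\{L,R\}\)), or \(p=0\) (the midpoint), some subintervals collapse to single points. I would remark that the identities above still hold, so collapsed pieces simply drop out and the remaining pieces still match. The main, albeit mild, obstacle is bookkeeping: in each case, checking that the lengths of the subintervals are expressed correctly through \(N,v,p\) via the identities just listed.
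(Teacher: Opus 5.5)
Your proposal is correct and follows essentially the paper's route: a direct piecewise check of the derivative formulas and of the matching conditions at the breakpoints, using \(pR+Nv=R\), \(\theta=2R+v\) (resp.\ \(pL=L-(N+1)v\), \(\theta=2L-v\)). The only difference is cosmetic. The paper gets \(g_x(\theta)=g_x(0)\) by showing \(\int_0^\theta g_x'(u)\,\de u=0\), whereas you evaluate the endpoint values directly; this also makes explicit the continuity of \(g_x\) at the interior breakpoints and \(g_x'(\theta)=g_x'(0)\), which the paper leaves to ``straightforward calculations''.
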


\begin{proof}
For \(x\in I_N^R\), recall that \(pR+Nv=R\) and \(\theta=2R+v\) by \eqref{eq:right-identities-proof} and \eqref{def:vptheta}.
Straightforward calculations show that \(g_x'\) is continuous and piecewise linear with the expressions in \eqref{def:dual-potential-right} and \eqref{def:dual-potential-left}.
Thus, \(g_x\) is continuously differentiable.
To show that \(g_x\) is \(\theta\)-periodic, consider
\begin{align*}
    \int_0^\theta g_x'(u)\,\de u
    &=
    \int_0^{pR}v\,\de u
    +
    \int_{pR}^{R}
    \left(
        v-\frac{2}{N}(u-pR)
    \right)\de u\\
    &\quad
    -
    \int_R^{R+pR}v\,\de u
    +
    \int_{R+pR}^{\theta}
    \left(
        -v+\frac{2}{N+1}(u-R-pR)
    \right)\de u.
\end{align*}
The second integral vanishes because its integrand decreases linearly from \(v\) to \(-v\) on an interval of length \(Nv\).
Similarly, the fourth integral vanishes because its integrand increases linearly from \(-v\) to \(v\) on an interval of length \((N+1)v\).
The first and third integrals cancel.
Hence,
\(
    \int_0^\theta g_x'(u)\de u=0.
\)

For \(x\in I_N^L\), recall that \(Nv+pL=L-v\) and \(\theta=2L-v\) from \eqref{eq:left-identities-proof} and \eqref{def:vptheta}.
Again, straightforward calculations show that \(g_x'\) is continuous and piecewise linear so that \(g_x\) is continuously differentiable.
To show that \(g_x\) is \(\theta\)-periodic, consider
\begin{align*}
    \int_0^\theta g_x'(u)\,\de u
    &=
    \int_0^{Nv}
    \left(
        v-\frac{2}{N}u
    \right)\de u
    -
    \int_{Nv}^{L-v}v\,\de u\\
    &\quad+
    \int_{L-v}^{L+Nv}
    \left(
        -v+\frac{2}{N+1}(u-L+v)
    \right)\de u
    +
    \int_{L+Nv}^{\theta}v\,\de u.
\end{align*}
The first and third integrals vanish because their integrands are linear and have endpoint values \(v,-v\) and \(-v,v\), respectively.
The second and fourth intervals both have length
\[
    L-v-Nv
    =
    \theta-L-Nv
    =
    pL.
\]
Hence, the second and fourth integrals cancel, and therefore
\(
    \int_0^\theta g_x'(u)\de u=0.
\)
\end{proof}

In the following result, we verify that \(\nu_x\) in \eqref{def_nu_x} is concentrated on the set \[\{(a,b)\in T:g_x(a)+g_x(b)=k_x(b-a)\}.\] For \(f_x=g_x+\theta m/2\), this is equivalent to concentration on the centered contact set \(\Gamma_{f_x,\theta}\).

\begin{lemma}[Equality on the support of \(\nu_x\)]
\label{lem:equality-on-support}
For every \(x\in[-\frac12,1)\), we have
\begin{align}\label{eq:equality-on-support}
    g_x(a)+g_x(b)
    =
    k_x(b-a)
    \qquad
    \text{for }\nu_x\text{-almost every }(a,b)\in T.
\end{align}
\end{lemma}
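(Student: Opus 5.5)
Since \(\nu_x\) is a finite sum of pushforwards of Lebesgue measures under the maps \(A_k\), \(B_k\), \(C_k\), it suffices to check the identity \(g_x(a)+g_x(b)=k_x(b-a)\) pointwise along each parametrized segment, i.e.\ to show that the three functions
\[
    s\mapsto g_x(A_k^{(1)}(s))+g_x(A_k^{(2)}(s))-k_x(\text{diff}),\qquad
    t\mapsto \text{(same for }B_k\text{)},\qquad
    t\mapsto \text{(same for }C_k\text{)}
\]
vanish identically on their parameter intervals. By the \(\theta\)-periodicity of \(g_x\) from Lemma~\ref{lem:dual-potential-well-defined}, every shift \(k\theta\) can be removed, so only \(k=0\) needs to be treated (together with the extra segment \(A_N^R\) in the right case, which is also a pure \(N\theta\)-shift). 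The plan is to treat each of these functions as \(C^1\) functions of the parameter, show that their derivative is zero using the explicit formulas \eqref{def:dual-potential-right}, \eqref{def:dual-potential-left} for \(g_x'\), and then fix the constant by one evaluation.

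\emph{Right case \(x\in I_N^R\).} For \(A_0^R(s)=(s,s+R)\), \(s\in[0,pR]\), the difference is constant \(R\); the first coordinate runs through \([0,pR)\) where \(g_x'=v\), the second through \([R,R+pR)\) where \(g_x'=-v\), so the derivative of the sum is \(0\), and the constant is fixed by the normalization \eqref{eq:normalization-potential-right}. For \(B_0^R(t)=(pR+t,\,pR+t+R+t/N)\), \(t\in[0,Nv]\): the first coordinate lies in \([pR,R]\) with \(g_x'=v-2t/N\), the second in \([R+pR,\theta]\) (using \(pR+Nv=R\)) with \(g_x'=-v+\frac{2}{N+1}\cdot\frac{(N+1)t}{N}=-v+2t/N\), and chain rule factor \(\frac{N+1}{N}\). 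The derivative of \(k_x(R+t/N)\) is \(\frac1N(2(R+t/N)-\theta)=\frac1N(2t/N-v)\). I will check that \((v-2t/N)+\frac{N+1}{N}(-v+2t/N)=\frac1N(2t/N-v)\), which holds identically. Continuity at \(t=0\) then matches the value at the endpoint of the \(A\)-segment, namely \((pR,pR+R)\), where equality is already established. For \(C_0^R(t)=(pR+t+R+t/N,\ \theta+pR+t)\), I will use periodicity to replace \(g_x(\theta+pR+t)\) by \(g_x(pR+t)\), and the symmetry \eqref{prop_kr} \(k_x(\theta-r)=k_x(r)\). The identity then becomes exactly the \(B_0^R\)-identity with the roles of the coordinates swapped, so it holds.

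\emph{Left case \(x\in I_N^L\).} This runs in the same way. \(A_0^L(s)=(Nv+s,Nv+s+L)\) has first coordinate in \([Nv,L-v]\) (slope \(-v\)) and second in \([L+Nv,\theta]\) (slope \(v\)); the constant comes from \eqref{eq:normalization-potential-left}. \(B_0^L(t)=(t,t+L-v+t/N)\) gives first coordinate slope \(v-2t/N\), second in \([L-v,L+Nv]\) with slope \(-v+2t/N\) times \(\frac{N+1}{N}\), against \(k_x\)-derivative \(\frac1N(2(L-v+t/N)-\theta)=\frac1N(2t/N-v)\), since \(\theta=2L-v\). The constant is obtained by continuity at \(t=Nv\) with the start of the \(A\)-segment, \((Nv,Nv+L)\). \(C_0^L\) follows from \(B_0^L\) via periodicity and \eqref{prop_kr}, exactly as in the right case.

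The main obstacle is bookkeeping rather than ideas. One has to confirm that each coordinate actually lies in the piece of \([0,\theta]\) where the claimed formula for \(g_x'\) applies; this uses the identities \eqref{eq:right-identities-proof} and \eqref{eq:left-identities-proof} and the image intervals computed in Lemma~\ref{lem_nux_in_PiT}. One must also ensure that the constants match at the junction points, since the segments connect continuously through \((pR,pR+R)\), respectively \((Nv,Nv+L)\). Degenerate cases \(p=0\) or \(v=0\) then need only the one nonempty family of segments; there the constant has to be checked directly from the definition of \(c_x^{R}\), \(c_x^L\), which I expect to be a routine evaluation.
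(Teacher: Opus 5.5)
Your proposal is correct and matches the paper's proof essentially step for step. Both arguments reduce to $k=0$ by $\theta$-periodicity. On the $A$-segments, both get a zero derivative and fix the constant by the normalization. On the $B$-segments, both get a zero derivative and match the constant at the junction point $(pR,pR+R)$, respectively $(Nv,Nv+L)$. On the $C$-segments, both combine periodicity with $k_x(\theta-r)=k_x(r)$.

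The separate check you anticipate for $p=0$ or $v=0$ is not needed. The normalization identities \eqref{eq:normalization-potential-right} and \eqref{eq:normalization-potential-left} hold for all parameter values. When $p=0$, the $A$-segment collapses to the very point where the normalization is imposed. When $v=0$, the $B$- and $C$-families carry no mass.
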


\begin{proof}
Recall the definitions of the mappings \(A_k^{L/R}\), \(B_k^{L/R}\), and \(C_k^{L/R}\).
By definition, the measure \(\nu_x\) is concentrated on the images of these mappings.
We verify \eqref{eq:equality-on-support} separately on each of the corresponding families of line segments.

By Lemma~\ref{lem:dual-potential-well-defined}, the function \(g_x\) is \(\theta\)-periodic.
Moreover, every mapping with index \(k\) is obtained from the corresponding mapping with index \(0\) by translating both coordinates by \(k\theta\).
For instance,
\[
    A_k^{L/R}(s)
    =
    A_0^{L/R}(s)+(k\theta,k\theta),
\]
and analogously for \(B_k^{L/R}\) and \(C_k^{L/R}\).
Since a common translation does not change the coordinate difference and
\[
    g_x(u+k\theta)=g_x(u)
    \qquad\text{for all }u\in\mathbb R,
\]
it is sufficient to consider the case \(k=0\).

\medskip
\noindent
\emph{Case 1: Let \(x\in I_N^R\).}
For the \(A_0^R\)-segments, define
\[
    H_A^R(s)
    :=
    g_x(s)+g_x(s+R)-k_x(R),
    \qquad
    0\leq s\leq pR.
\]
Since \(s\in[0,pR]\) and \(s+R\in[R,R+pR]\), it follows from \eqref{def:dual-potential-right} that \(g_x'(s)=v\) and \(g_x'(s+R)=-v\).
Consequently,
\[
    (H_A^R)'(s)
    =
    g_x'(s)+g_x'(s+R)
    =
    0.
\]
Thus, \(H_A^R\) is constant.
By the normalization condition \eqref{eq:normalization-potential-right}, \(H_A^R(0) = g_x(0)+g_x(R)-k_x(R) = 0. \) Hence,
\begin{align}\label{eq_H_A00}
    g_x(s)+g_x(s+R)=k_x(R)
\end{align}
for every \(s\in[0,pR]\).
Since \(A_0^R(s)=(s,s+R)\) has coordinate difference \(R\), this proves \eqref{eq:equality-on-support} on the \(A_k^R\)-segments.

Next, consider the \(B_0^R\)-segments.
Set
\[
    a(t):=pR+t,
    \qquad
    r(t):=\ell_R(t)=R+\frac{t}{N},
    \qquad
    0\leq t\leq Nv,
\]
and define
\[
    H_B^R(t)
    :=
    g_x(a(t))
    +
    g_x(a(t)+r(t))
    -
    k_x(r(t)).
\]
Since \(\theta=2R+v\) by \eqref{def:vptheta}, we have
\[
    k_x'(r(t))
    =
    2r(t)-\theta
    =
    -v+\frac{2t}{N}.
\]
Moreover, by \eqref{def:dual-potential-right},
\[
    g_x'(a(t))
    =
    v-\frac{2t}{N}
    =
    -k_x'(r(t)).
\]
Furthermore, \(a(t)+r(t) = R+pR+\frac{N+1}{N}t, \) and hence
\[
    g_x'(a(t)+r(t))
    =
    -v+\frac{2t}{N}
    =
    k_x'(r(t)).
\]
Using \(a'(t)=1\) and \(r'(t)=\frac1N\), we obtain
\begin{align*}
    (H_B^R)'(t)
    &=
    g_x'(a(t))
    +
    g_x'(a(t)+r(t))
    \left(1+\frac1N\right)
    -
    k_x'(r(t))\frac1N
    \\
    &=
    -k_x'(r(t))
    +
    k_x'(r(t))
    \left(1+\frac1N\right)
    -
    k_x'(r(t))\frac1N
    =0.
\end{align*}
Thus, \(H_B^R\) is constant.
At \(t=0\), we have \(a(0)=pR\) and \(r(0)=R\), and therefore
\[
    B_0^R(0)
    =
    (pR,pR+R)
    =
    A_0^R(pR).
\]
The equality \eqref{eq_H_A00} already proved on the \(A_0^R\)-segment consequently gives
\[
    H_B^R(0)
    =
    g_x(pR)+g_x(pR+R)-k_x(R)
    =
    0.
\]
Hence,
\begin{align}\label{eq_B_0R}
    g_x(a(t))+g_x(a(t)+r(t))
    =
    k_x(r(t))
\end{align}
for every \(t\in[0,Nv]\).
Since
\(
    B_0^R(t)
    =
    \bigl(a(t),a(t)+r(t)\bigr)
\)
and the coordinate difference of this pair is \(r(t)\), this proves \eqref{eq:equality-on-support} on the \(B_k^R\)-segments.

Finally, the pair generated by \(C_0^R\) is
\(
    C_0^R(t)
    =
    \bigl(a(t)+r(t),\,\theta+a(t)\bigr).
\)
By the \(\theta\)-periodicity of \(g_x\),
\[
    g_x(\theta+a(t))=g_x(a(t)).
\]
Using the equality \eqref{eq_B_0R} already established on the \(B_0^R\)-segment, we therefore obtain
\begin{align*}
    g_x(a(t)+r(t))+g_x(\theta+a(t))
    &=
    g_x(a(t)+r(t))+g_x(a(t))
    =
    k_x(r(t)).
\end{align*}
The coordinate difference of the \(C_0^R\)-pair is
\[
    \bigl(\theta+a(t)\bigr)
    -
    \bigl(a(t)+r(t)\bigr)
    =
    \theta-r(t).
\]
Since
\(
    k_x(\theta-r)=k_x(r),
\) by \eqref{prop_kr},
it follows that
\[
    g_x(a(t)+r(t))+g_x(\theta+a(t))
    =
    k_x(\theta-r(t)).
\]
This proves \eqref{eq:equality-on-support} on the
\(C_k^R\)-segments.

\medskip
\noindent
\emph{Case 2: Let \(x\in I_N^L\).}
For the \(A_0^L\)-segments, define
\[
    H_A^L(s)
    :=
    g_x(Nv+s)+g_x(Nv+s+L)-k_x(L),
    \qquad
    0\leq s\leq pL.
\]
Since \(Nv+pL=L-v\) by \eqref{eq:left-identities-proof},
we have \(Nv+s\in[Nv,L-v]\) and \(Nv+s+L\in[L+Nv,\theta]\).
It follows from \eqref{def:dual-potential-left} that
\(g_x'(Nv+s)=-v\) and \(g_x'(Nv+s+L)=v\).
Consequently,
\[
    (H_A^L)'(s)
    =
    g_x'(Nv+s)+g_x'(Nv+s+L)
    =
    0.
\]
Thus, \(H_A^L\) is constant. By the normalization condition
\eqref{eq:normalization-potential-left},
\[
    H_A^L(0)
    =
    g_x(Nv)+g_x(Nv+L)-k_x(L)
    =
    0.
\]
Hence,
\begin{align}\label{eq_H_A0}
    g_x(Nv+s)+g_x(Nv+s+L)
    =
    k_x(L)
\end{align}
for every \(s\in[0,pL]\). Since
\(A_0^L(s)
    =
    \bigl(Nv+s,Nv+s+L\bigr)
\)
has coordinate difference \(L\), this proves \eqref{eq:equality-on-support} on the \(A_k^L\)-segments.

Next, consider the \(B_0^L\)-segments.
Set
\[
    a(t):=t,
    \qquad
    r(t):=\ell_L(t)=L-v+\frac{t}{N},
    \qquad
    0\leq t\leq Nv,
\]
and define
\[
    H_B^L(t)
    :=
    g_x(a(t))
    +
    g_x(a(t)+r(t))
    -
    k_x(r(t)).
\]
Since \(\theta=2L-v\), we have
\[
    k_x'(r(t))
    =
    2r(t)-\theta
    =
    -v+\frac{2t}{N}.
\]
Moreover, by \eqref{def:dual-potential-left},
\[
    g_x'(a(t))
    =
    v-\frac{2t}{N}
    =
    -k_x'(r(t)).
\]
Furthermore, \(a(t)+r(t) = L-v+\frac{N+1}{N}t, \) and hence
\[
    g_x'(a(t)+r(t))
    =
    -v+\frac{2t}{N}
    =
    k_x'(r(t)).
\]
Using \(a'(t)=1\) and \(r'(t)=\frac1N\), we obtain
\begin{align*}
    (H_B^L)'(t)
    &=
    g_x'(a(t))
    +
    g_x'(a(t)+r(t))
    \left(1+\frac1N\right)
    -
    k_x'(r(t))\frac1N
    \\
    &=
    -k_x'(r(t))
    +
    k_x'(r(t))
    \left(1+\frac1N\right)
    -
    k_x'(r(t))\frac1N
    =0.
\end{align*}
Thus, \(H_B^L\) is constant.
At \(t=Nv\), we have \(a(Nv)=Nv\) and \(r(Nv)=L\) and therefore
\[
    B_0^L(Nv)
    =
    (Nv,Nv+L)
    =
    A_0^L(0).
\]
The equality \eqref{eq_H_A0} already proved on the \(A_0^L\)-segment consequently gives
\[
    H_B^L(Nv)
    =
    g_x(Nv)+g_x(Nv+L)-k_x(L)
    =
    0.
\]
Hence,
\begin{align}\label{eq:H_BL}
    g_x(a(t))+g_x(a(t)+r(t))
    =
    k_x(r(t))
\end{align}
for every \(t\in[0,Nv]\).
Since \(B_0^L(t) = \bigl(a(t),a(t)+r(t)\bigr) \) and the coordinate difference of this pair is \(r(t)\), this proves \eqref{eq:equality-on-support} on the \(B_k^L\)-segments.

Finally, the pair generated by \(C_0^L\) is
\[
    C_0^L(t)
    =
    \bigl(a(t)+r(t),\,\theta+a(t)\bigr).
\]
By the \(\theta\)-periodicity of \(g_x\),
\[
    g_x(\theta+a(t))=g_x(a(t)).
\]
Using the equality in \eqref{eq:H_BL} already established on the \(B_0^L\)-segment, we therefore obtain
\begin{align*}
    g_x(a(t)+r(t))+g_x(\theta+a(t))
    &=
    g_x(a(t)+r(t))+g_x(a(t))
    =
    k_x(r(t)).
\end{align*}
The coordinate difference of the \(C_0^L\)-pair is
\[
    \bigl(\theta+a(t)\bigr)
    -
    \bigl(a(t)+r(t)\bigr)
    =
    \theta-r(t).
\]
Since \(k_x(\theta-r)=k_x(r)\) by \eqref{prop_kr}, it follows that
\[
    g_x(a(t)+r(t))+g_x(\theta+a(t))
    =
    k_x(\theta-r(t)),
\]
which proves \eqref{eq:equality-on-support} on the \(C_k^L\)-segments.

Since \(\nu_x\) is concentrated on the images of the mappings \(A_k^{L/R}\), \(B_k^{L/R}\), and \(C_k^{L/R}\), the equality \eqref{eq:equality-on-support} holds \(\nu_x\)-almost surely.
\end{proof}

Next we show dual feasibility.
\begin{lemma}[Global dual feasibility]
\label{lem:global-dual-feasibility}
For every \(x\in[-\frac12,1)\), the candidate potential \(g_x\) satisfies
\begin{align}\label{eq:global-dual-feasibility}
    g_x(a)+g_x(b)
    \leq
    k_x(b-a)
    \qquad
    \text{for all }(a,b)\in T.
\end{align}
Consequently, the centered potential
\begin{align}\label{def_fxvgx}
    f_x(u):=g_x(u)+\frac{\theta m}{2},
    \qquad u\in[0,1],
\end{align}
satisfies \(f_x(a)+f_x(b) +\theta\bigl((b-a)-m\bigr) \leq (b-a)^2 \) for all \((a,b)\in T\), and hence \((f_x,\theta)\) is feasible.
\end{lemma}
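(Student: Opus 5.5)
We plan to show that the function
\[
F(a,r)\coloneqq k_x(r)-g_x(a)-g_x(a+r)
\]
is nonnegative for all \(a\in\R\) and \(r\ge0\). Here \(g_x\) is the \(\theta\)-periodic extension from Definition~\ref{def:dual-potential}. This is stronger than \eqref{eq:global-dual-feasibility}, and the statement about \((f_x,\theta)\) then follows at once: substituting \(f_x=g_x+\theta m/2\) turns \eqref{eq:global-dual-feasibility} into the centered inequality. We first reduce the range of \(r\). A direct computation gives \(k_x(r)-k_x(r-\theta)=2\theta(r-\theta)\ge0\) for \(r\ge\theta\), while \(F(a,\cdot)-k_x\) is \(\theta\)-periodic in \(r\). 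Hence it suffices to treat \(r\in[0,\theta]\). Since \(a\) ranges over a full period, the minimisation now takes place on the compact set \([0,\theta]\times[0,\theta]\). By \eqref{prop_kr} and periodicity, \(F(a,\theta-r)=F(a+r-\theta,r)\), so we may even restrict to \(r\le\theta/2\), although this is not essential.

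Next we treat the boundary \(r\in\{0,\theta\}\). Since \(k_x(0)=k_x(\theta)=0\) and \(g_x(a+\theta)=g_x(a)\), we need \(\max g_x\le0\). By Lemma~\ref{lem:dual-potential-well-defined}, \(g_x\) is maximal where \(g_x'\) changes sign from \(+\) to \(-\). In the right case this happens at \(u=pR+Nv=R\), and
\[
\max g_x=c_x^R+vpR+\tfrac{Nv^2}{2}.
\]
Inserting \(c_x^R=\tfrac12(R^2-\theta R-vpR)\), \(\theta=2R+v\) and \(pR+Nv=R\) should give a negative quantity; a quick check gives approximately \(-\tfrac12(R^2+vR(1-p))+\tfrac{Nv^2}{2}\), which should be \(\le0\) using \(Nv\le R\). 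The left case is analogous, with the maximum at \(u=L+Nv\equiv\theta\), i.e.\ at \(0\).

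Then we analyse interior minima of \(F\) on the torus-in-\(a\) times \((0,\theta)\). The first-order conditions \(\partial_aF=0=\partial_rF\) read
\[
g_x'(a)+g_x'(a+r)=0 \qquad\text{and}\qquad g_x'(a+r)=k_x'(r)=2r-\theta.
\]
Since \(|g_x'|\le v\) by \eqref{def:dual-potential-right}--\eqref{def:dual-potential-left}, every critical point satisfies \(r\in[\tfrac{\theta-v}{2},\tfrac{\theta+v}{2}]\), which is \([R,R+v]\) in the right case and \([L-v,L]\) in the left case. This is exactly the range of differences carried by \(\nu_x\). For such \(r\), we must show that \(\max_a\{g_x(a)+g_x(a+r)\}=k_x(r)\). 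By Lemma~\ref{lem:equality-on-support}, the value \(k_x(r)\) is attained at the support points \(a(t)\) of the \(B\)- and \(A\)-segments.

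The key step is to show that every solution \(a\) of \(g_x'(a)=-g_x'(a+r)\), for fixed \(r\) in that range, gives a value at most the one on the support. Here the trapezoidal shape of \(g_x'\) enters: \(g_x'\) takes the constant values \(\pm v\) on plateaus and is linear on ramps with slopes \(-2/N\) and \(2/(N+1)\). We will enumerate, modulo \(\theta\), the cases according to which piece contains \(a\) and which contains \(a+r\):
\begin{itemize}
\item plateau--plateau, which reproduces the \(A\)-segments;
\item descending ramp--ascending ramp, which reproduces the \(B\)/\(C\)-segments;
\item the mixed configurations.
\end{itemize}
In each mixed configuration we will either show that the critical point is not a local maximum of \(a\mapsto g_x(a)+g_x(a+r)\), since \(g_x''(a)+g_x''(a+r)>0\) there, or compare its value directly with the support value by integrating \(g_x'\). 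An alternative we would try first, to avoid a full case split, is to note that \(r\mapsto g_x(a+r)-k_x(r)\) is strictly concave, because \(g_x''\le\tfrac{2}{N+1}<2=k_x''\). This concavity lets one transport the equality from the contact curve \(r=r(a)\) to all \(r\), giving \(F(a,r)\ge \partial_rF(a,r(a))\,(r-r(a))\) minus concavity terms, provided a contact \(r(a)\) exists for every \(a\). The difficulty is that some \(a\), such as the plateau points where \(a\) lies in \([R,R+pR)\) in the right case, do not appear as first coordinates on the support. There we must use \(b\)-coordinates instead, by swapping roles via the \(C\)-segments and periodicity. We expect this bookkeeping, namely covering every \(a\) modulo \(\theta\) by some contact pair and controlling the sign of \(\partial_rF\) there, to be the main obstacle.
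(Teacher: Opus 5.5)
Your reduction to $r\in[0,\theta]$ is correct and matches the paper's first step. In your ``alternative'' you also isolate the structural fact the paper relies on: $r\mapsto F(a,r)$ is strictly convex, because $g_x''\le\frac{2}{N+1}<2=k_x''$. The genuine gap is that the proposal stops exactly where the proof has to happen. You never show that $\min_{r\in[0,\theta]}F(a,r)=0$ for every $a$, and both routes you sketch leave this open.

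In the primary route, after writing both first-order conditions you fix $r$ and propose to examine all $a$ with $g_x'(a)+g_x'(a+r)=0$. This discards the second condition $g_x'(a+r)=2r-\theta$. That is what creates your ``mixed configurations'' (among them the minimisers of $a\mapsto g_x(a)+g_x(a+r)$), whose values you would then have to compare by integrating $g_x'$; none of this is carried out. If you keep both conditions, they force $r=r_*(a):=\frac{\theta-g_x'(a)}{2}$.

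In the alternative, convexity of $F(a,\cdot)$ gives $F(a,r)\ge F(a,r(a))+\partial_rF(a,r(a))(r-r(a))$ with no correction terms. This bound is useful only if $\partial_rF(a,r(a))=0$ exactly. Controlling its sign is not enough, since $r(a)$ is interior and $r$ ranges on both sides of it.

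The missing ingredient is the identity $g_x'(a+r_*(a))=-g_x'(a)$ for all $a$. It is a short check on the four pieces of $g_x'$:
\begin{itemize}
\item On the plateaus, $r_*$ equals $R$ or $R+v$ (resp.\ $L$ or $L-v$), and it sends the plateau with value $\pm v$ onto the one with value $\mp v$, modulo $\theta$.
\item The descending ramp (slope $-2/N$) is mapped affinely with factor $\frac{N+1}{N}$ onto the ascending ramp (slope $\frac{2}{N+1}$).
\item The ascending ramp is mapped back onto the descending ramp with factor $\frac{N}{N+1}$, after a shift by $\theta$.
\end{itemize}
This identity gives $\partial_rF(a,r_*(a))=0$. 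Since $|g_x'|\le v<\theta$, $r_*(a)$ is then the unique minimiser on $[0,\theta]$.

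The paper then shows that $H(a):=F(a,r_*(a))$ is constant, because $H'(a)=\partial_aF(a,r_*(a))=-g_x'(a)-g_x'(a+r_*(a))=0$ a.e. It vanishes by the normalisation of $c_x^R$ resp.\ $c_x^L$.

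Your alternative can also be finished without this envelope argument:
\begin{itemize}
\item For $a$ in three of the four pieces, $(a,a+r_*(a))$ runs through the $A$-, $B$- and $C$-segments, so $F=0$ there by Lemma~\ref{lem:equality-on-support}.
\item For the remaining plateau, $[R,R+pR)$ resp.\ $[L+Nv,\theta)$, apply $F(a+r,\theta-r)=F(a,r)$ to the $A$-segments. This identity follows from \eqref{prop_kr} and periodicity.
\end{itemize}

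With either completion, your separate boundary step is superfluous, since $F(a,0)=-2g_x(a)\ge0$ follows. That step also contains two slips:
\begin{itemize}
\item $g_x$ is maximal where $g_x'$ vanishes in the middle of the descending ramp ($u=pR+\frac{Nv}{2}$ resp.\ $u=\frac{Nv}{2}$). It is not maximal at $u=R$ resp.\ $u=0$, where $g_x'=-v$ resp.\ $+v$. The conclusion $\max g_x<0$ still holds.
\item The reflection identity should read $F(a,\theta-r)=F(a-r,r)$.
\end{itemize}
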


\begin{proof}
Extend \(g_x\) and \(g_x'\) \(\theta\)-periodically to \(\mathbb R\).
For \(u\in\mathbb R\) and \(r\geq0\), define
\begin{align}\label{def_Gxar}
    G_x(u,r)
    :=
    k_x(r)-g_x(u)-g_x(u+r).
\end{align} 
It suffices to prove that
\begin{align}\label{eq_assG}
    G_x(u,r)\geq0
    \qquad
    \text{for all }u\in\mathbb R,\ r\geq0.
\end{align}
We first reduce the problem to \(r\in[0,\theta]\).
Write \(r=r_0+n\theta\) and \(r_0\in[0,\theta)\) for \(n\in\mathbb N_0\).
By the \(\theta\)-periodicity of \(g_x\) (Lemma \ref{lem:dual-potential-well-defined}), we have \(g_x(u+r)=g_x(u+r_0)\).
Moreover, by definition of \(k_x\) in \eqref{def:k_x},
\begin{align}\label{eq:G-periodic-increment}
G_x(u,r_0+n\theta)-G_x(u,r_0) 
    & =k_x(r_0+n\theta)-k_x(r_0)\\
    &=
    (r_0+n\theta)^2
    -\theta(r_0+n\theta)
    -r_0^2+\theta r_0
    =
    n\theta
    \bigl(2r_0+(n-1)\theta\bigr)
    \geq0. \notag
\end{align}
Consequently, \(G_x(u,r)\geq G_x(u,r_0)\) and it is enough to establish \eqref{eq_assG} for \(0\leq r\leq\theta\).

Fix \(u\in\mathbb R\).
Since \(g_x'\) is continuous and piecewise linear, the function \(r\longmapsto G_x(u,r)\) is continuously differentiable and piecewise twice differentiable.
Its first derivative is
\begin{align}\label{eq_delGr}
    \partial_rG_x(u,r)
    =
    2r-\theta-g_x'(u+r).
\end{align}
The slopes of \(g_x'\) belong to the set \(\left\{ 0,-\frac{2}{N},\frac{2}{N+1} \right\}\).
Hence, wherever the second derivative exists,
\[
    \partial_r^2G_x(u,r)
    =
    2-g_x''(u+r)
    \geq
    2-\frac{2}{N+1}
    >0.
\]
It follows that \(r\mapsto G_x(u,r)\) is strictly convex on \([0,\theta]\).
Define
\begin{align}\label{eq:def-r-star}
    r_*(u)
    :=
    \frac{\theta-g_x'(u)}{2}.
\end{align}
Since, as a consequence of the representation of \(g_x'\) in \eqref{def:dual-potential-right} and \eqref{def:dual-potential-left}, using the definition of \(\theta\) in \eqref{def:vptheta}, one can show that
\begin{align}\label{ineq_gp}
    |g_x'(u)|\leq v<\theta
\end{align}
which yields
\(
    r_*(u)\in(0,\theta).
\)

We next prove the identity
\begin{align}\label{eq:opposite-slope-identity}
    g_x'\bigl(u+r_*(u)\bigr)
    =
    -g_x'(u),
\end{align}
Both \(g_x'\) and, by \eqref{eq:def-r-star}, \(r_*\) are \(\theta\)-periodic.
Given \(u\in\mathbb R\), choose \(u_0\in[0,\theta)\) with \(u-u_0\in\theta\mathbb Z\).
Then \(g_x'(u)=g_x'(u_0)\), \(r_*(u)=r_*(u_0)\), and
\[
    g_x'\bigl(u+r_*(u)\bigr)
    =g_x'\bigl(u_0+r_*(u_0)\bigr).
\]
It therefore suffices to prove \eqref{eq:opposite-slope-identity} for \(u\in[0,\theta)\), which is done by the following case split.

\medskip
\noindent
\emph{Case 1: \(x\in I_N^R\).}
Recall from \eqref{def:vptheta} and \eqref{eq:right-identities-proof} that
\begin{align*}
    \theta=2R+v \quad \text{and} \quad R-pR=Nv, \quad \text{so} \quad \theta-R-pR=(N+1)v.
\end{align*}
If \(u\in[0,pR]\), then \(g_x'(u)=v\), and hence \(r_*(u)=R\).
By \eqref{def:dual-potential-right}, this gives
\begin{align*}
    g_x'(u + r_*(u)) = -v = -g_x'(u).
\end{align*}
If \(u=pR+s\), \(0\leq s\leq Nv\), then by\eqref{def:dual-potential-right} and \eqref{eq:def-r-star} \(g_x'(u)=v-\frac{2s}{N}\) and \(r_*(u) = R+\frac{s}{N}. \) Thus,
\[
    u+r_*(u)
    =
    R+pR+\frac{N+1}{N}s.
\]
This point belongs to \([R+pR,\theta]\), and therefore
\begin{align*}
    g_x'(u+r_*(u))
    &=
    -v
    +
    \frac{2}{N+1}
    \left(
        \frac{N+1}{N}s
    \right)
    =
    -v+\frac{2s}{N}
    =
    -g_x'(u).
\end{align*}
If \(u=R+s\), \(0\leq s\leq pR\), then \(g_x'(u)=-v\), and hence \(r_*(u)=R+v\).
Since \(u+r_*(u) = \theta+s, \) periodicity gives
\[
    g_x'(u+r_*(u))
    =
    g_x'(s)
    =
    v
    =
    -g_x'(u).
\]
Finally, if \(u=R+pR+s\), \(0\leq s\leq(N+1)v\).
Then \(g_x'(u) = -v+\frac{2s}{N+1} \) and \(r_*(u) = R+v-\frac{s}{N+1} \).
Consequently,
\[
    u+r_*(u)
    =
    \theta+pR+\frac{N}{N+1}s.
\]
By periodicity of \(g_x'\),
\[
    g_x'(u+r_*(u))
    =
    g_x'\left(
        pR+\frac{N}{N+1}s
    \right).
\]
Since \(0 \leq \frac{N}{N+1}s \leq Nv, \) the argument above belongs to \([pR,R]\), and thus
\begin{align*}
    g_x'(u+r_*(u))
    &=
    v
    -
    \frac{2}{N}
    \left(
        \frac{N}{N+1}s
    \right)
    =
    v-\frac{2s}{N+1}
    =
    -g_x'(u).
\end{align*}
This proves \eqref{eq:opposite-slope-identity} in the right-hand case.

\medskip
\noindent
\emph{Case 2: \(x\in I_N^L\).}
Recall that
\begin{align*}
    \theta=2L-v\ \quad \text{and} \quad L-v-Nv=pL, \quad \text{so}\quad \theta-L-Nv=pL.
\end{align*}
If \(u=s,\) \(0\leq s\leq Nv\), then, by \eqref{def:dual-potential-left} and  \eqref{eq:def-r-star}, \(g_x'(u) = v-\frac{2s}{N} \) and \(r_*(u) = L-v+\frac{s}{N} \).
Hence,
\[
    u+r_*(u)
    =
    L-v+\frac{N+1}{N}s,
\]
which belongs to \([L-v,L+Nv]\).
Therefore,
\begin{align*}
    g_x'(u+r_*(u))
    =
    -v
    +
    \frac{2}{N+1}
    \left(
        \frac{N+1}{N}s
    \right)
    =
    -v+\frac{2s}{N}
    =
    -g_x'(u).
\end{align*}
If \(u=Nv+s\), \(0\leq s\leq pL\), then \(g_x'(u)=-v\), and hence \(r_*(u)=L\).
Since
\[
    u+r_*(u)
    =
    L+Nv+s
    \in[L+Nv,\theta],
\]
we obtain
\[
    g_x'(u+r_*(u))
    =
    v
    =
    -g_x'(u).
\]
If \(u=L-v+s\), \(0\leq s\leq(N+1)v\), then \(g_x'(u) = -v+\frac{2s}{N+1} \) and \(r_*(u) = L-\frac{s}{N+1} \).
Thus,
\[
    u+r_*(u)
    =
    \theta+\frac{N}{N+1}s.
\]
By periodicity of \(g_x'\),
\[
    g_x'(u+r_*(u))
    =
    g_x'\left(
        \frac{N}{N+1}s
    \right).
\]
Since \(0 \leq \frac{N}{N+1}s \leq Nv, \) we obtain
\begin{align*}
    g_x'(u+r_*(u))
    =
    v
    -
    \frac{2}{N}
    \left(
        \frac{N}{N+1}s
    \right)
    =
    v-\frac{2s}{N+1}
    =
    -g_x'(u).
\end{align*}
Finally, if \(u=L+Nv+s\), \(0\leq s\leq pL\), then \(g_x'(u)=v\), and hence
\(
    r_*(u)=L-v
\).
Since
\[
    u+r_*(u)
    =
    \theta+Nv+s,
\]
periodicity gives
\[
    g_x'(u+r_*(u))
    =
    g_x'(Nv+s)
    =
    -v
    =
    -g_x'(u).
\]
Thus, \eqref{eq:opposite-slope-identity} also holds in the left-hand
case.

Now, using \eqref{eq_delGr}, \eqref{eq:def-r-star}, and \eqref{eq:opposite-slope-identity}, we obtain
\begin{align*}
    \partial_rG_x(u,r_*(u))
    =
    2r_*(u)-\theta
    -
    g_x'\bigl(u+r_*(u)\bigr)
    =
    -g_x'(u)+g_x'(u)
    =0.
\end{align*}
By strict convexity, \(r_*(u)\) is the unique minimizer of
\(r\mapsto G_x(u,r)\) on \([0,\theta]\).

It remains to determine the minimum value. Define
\[
    H_x(u)
    :=
    G_x(u,r_*(u)).
\]
Since \(g_x'\) is continuous and piecewise linear, the function
\(r_*\) is Lipschitz continuous. Consequently, \(H_x\) is locally
Lipschitz and hence absolutely continuous on compact intervals.
At every point at which the relevant derivatives exist, the chain
rule gives
\begin{align*}
    H_x'(u)
    =
    k_x'(r_*(u))r_*'(u)
    -
    g_x'(u)
    -
    g_x'\bigl(u+r_*(u)\bigr)
    \bigl(1+r_*'(u)\bigr).
\end{align*}
By the definition of \(k_x\) and \(r_*\) in \eqref{def:k_x} and \eqref{eq:def-r-star},
\[
    k_x'(r_*(u))
    =
    2r_*(u)-\theta
    =
    -g_x'(u).
\]
Together with \eqref{eq:opposite-slope-identity}, this yields
\begin{align*}
    H_x'(u)
    &=
    -g_x'(u)r_*'(u)
    -
    g_x'(u)
    +
    g_x'(u)\bigl(1+r_*'(u)\bigr)
    =0
\end{align*}
almost everywhere. Hence, \(H_x\) is constant.

If \(x\in I_N^R\), then
\(g_x'(0)=v\) and \(r_*(0)
    =
    \frac{\theta-v}{2}
    =
    R\).
By the normalization condition
\eqref{eq:normalization-potential-right},
\[H_x(0)
    =
    k_x(R)-g_x(0)-g_x(R)
    =
    0.
\]
If \(x\in I_N^L\), then
\(g_x'(Nv)=-v\) and \(r_*(Nv)
    =
    \frac{\theta+v}{2}
    =
    L\).
By the normalization condition
\eqref{eq:normalization-potential-left},
\[
    H_x(Nv)
    =
    k_x(L)-g_x(Nv)-g_x(Nv+L)
    =
    0.
\]
Thus, in both cases,
\(H_x(u)=0\) for all \(u\in\mathbb R\).
Since \(r_*(u)\) is the minimizer of \(r\mapsto G_x(u,r)\), we obtain
\begin{align}\label{eq:G-unique-zero}
    G_x(u,r)
    \geq
    G_x(u,r_*(u))
    =
    0
\end{align}
for every \(u\in\mathbb R\) and every \(r\in[0,\theta]\). By the
initial reduction, the same inequality holds for every \(r\geq0\).

Finally, let \((a,b)\in T\). Setting
\(u=a\) and \(r=b-a\),
we obtain
\[
    0
    \leq
    G_x(a,b-a)
    =
    k_x(b-a)-g_x(a)-g_x(b),
\]
which proves \eqref{eq:global-dual-feasibility}.

Since \(k_x(b-a)
    =
    (b-a)^2-\theta(b-a),
\)
we have
\[
    g_x(a)+g_x(b)+\theta(b-a)
    \leq
    (b-a)^2.
\]
Since \(f_x(u)=g_x(u)+\frac{\theta m}{2}\) by \eqref{def_fxvgx}, this is equivalent to
\[
    f_x(a)+f_x(b)
    +\theta\bigl((b-a)-m\bigr)
    \leq
    (b-a)^2.
\]
Hence, \((f_x,\theta)\) is feasible.
\end{proof}

For proving uniqueness of the optimal copula \(C_x\) in Proposition~\ref{prop:rho-phi-Cx}, we use the following elementary lemma.
It states that a finite signed measure cannot be invariant, up to sign, under a map that shifts every point by a strictly positive amount, unless the measure is identically zero.

\begin{lemma}
\label{lem:no-upward-circulation}
Let \(A\subseteq[0,1]\) be Borel and let \(\tau:A\to[0,1]\) be Borel.
Suppose that, for some \(q>0\), \(\tau(a)\geq a+q\) for every \(a\in A\).
If a finite signed Borel measure \(\mu\) on \(A\) satisfies
\[
    \mu=\varepsilon\,\tau_\#\mu
    \qquad\text{as measures on }[0,1]
\]
for some \(\varepsilon\in\{-1,1\}\), where \(\mu\) is extended by zero outside \(A\), then \(\mu=0\).
\end{lemma}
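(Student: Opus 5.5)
The natural strategy is an iteration argument that exploits the positive shift. First note that the total variation measure is compatible with the push-forward. Write \(|\mu|\) for the total variation of \(\mu\). From \(\mu=\varepsilon\,\tau_\#\mu\) we get \(|\mu|=|\tau_\#\mu|\leq \tau_\#|\mu|\). Both sides have the same total mass \(|\mu|(A)\), since \(\tau_\#|\mu|([0,1])=|\mu|(\tau^{-1}([0,1]))=|\mu|(A)\). Hence \(|\mu|=\tau_\#|\mu|\). So it suffices to show that a finite nonnegative measure \(\sigma:=|\mu|\) with \(\sigma=\tau_\#\sigma\) must vanish.

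The key consequence of \(\sigma=\tau_\#\sigma\) is that \(\sigma\) is concentrated on \(A\), and its image measure under \(\tau\) is again \(\sigma\). In particular \(\sigma(\tau^{-1}(A))=\sigma(A)\), so \(\sigma\)-a.e.\ point \(a\) has \(\tau(a)\in A\). Iterating, \(\sigma\)-a.e.\ point admits all iterates \(\tau^n(a)\in A\), because
\[
\sigma\bigl(\{a:\tau^k(a)\in A \text{ for } k\le n\}\bigr)=\sigma(A)
\]
by induction using invariance. Along such an orbit, \(\tau^n(a)\geq a+nq\geq nq\). Choosing \(n>1/q\) gives \(\tau^n(a)>1\), which contradicts \(\tau^n(a)\in[0,1]\). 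Thus the set of points with orbits of length \(n\) is empty, while it has full \(\sigma\)-measure. Hence \(\sigma(A)=0\), so \(|\mu|=0\) and \(\mu=0\).

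A cleaner alternative avoids iterates and is the one I would actually write. Consider the bounded function \(h(a)=a\) and integrate: \(\int a\,\de\sigma(a)=\int \tau(a)\,\de\sigma(a)\geq\int a\,\de\sigma+q\,\sigma(A)\). This forces \(\sigma(A)=0\).

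The main obstacle is the first step, passing from the signed identity to invariance of \(|\mu|\). The inequality \(|\tau_\#\mu|\le\tau_\#|\mu|\) is standard (take the Hahn decomposition and push it forward). The equality of total masses then upgrades the inequality between nonnegative measures to an identity, and the sign \(\varepsilon\) plays no role.
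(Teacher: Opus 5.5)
Your proposal is correct, but it takes a different route from the paper.

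\textbf{The paper's argument.} It never leaves the signed setting. It partitions \([0,1]\) into strips \(E_0,\dots,E_{N-1}\) of width \(q\) and argues bottom-up. No point of \(A\) is mapped into \(E_0\), so \(\mu(B)=\varepsilon\,\mu(\tau^{-1}(B))=0\) for every Borel \(B\subseteq E_0\). For \(B\subseteq E_j\), the preimage \(\tau^{-1}(B)\) lies in the lower strips, where \(\mu\) already vanishes by induction.

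\textbf{Your argument.} You first reduce to a nonnegative invariant measure. The inequality \(|\mu|=|\tau_\#\mu|\le\tau_\#|\mu|\), together with \(|\mu|([0,1])=|\mu|(A)=|\mu|(\tau^{-1}([0,1]))=\tau_\#|\mu|([0,1])\), forces \(|\mu|=\tau_\#|\mu|\). This is sound, because the difference of the two finite measures is nonnegative with zero total mass. You then eliminate \(\sigma=|\mu|\) with the first-moment identity \(\int a\,\de\sigma=\int\tau(a)\,\de\sigma\ge\int a\,\de\sigma+q\,\sigma(A)\). This is legitimate because \(\sigma\) is finite and concentrated on the bounded set \(A\), so \(\int a\,\de\sigma<\infty\). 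Your alternative iterate argument is also correct: the sets of points whose first \(n\) iterates stay in \(A\) have full \(\sigma\)-measure by invariance, but are empty once \(nq>1\).

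\textbf{Trade-offs.} The paper's strip induction needs no total-variation or Hahn-decomposition machinery. It only uses that a signed measure vanishing on a region assigns zero to Borel subsets of it. It also really only needs the domain to be bounded below: on a half-line the same induction runs through countably many strips. Your version is shorter and more conceptual once the reduction to \(|\mu|\) is made. It is a center-of-mass argument: an invariant finite measure cannot have all its mass shifted upward by a fixed amount \(q\). It uses the boundedness of \([0,1]\) only through finiteness of the first moment, and could be adapted to unbounded domains by replacing \(h(a)=a\) with a bounded strictly increasing test function.
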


\begin{proof}
Extend \(\mu\) by zero on \(A^c\) to a finite signed Borel measure on \([0,1]\).
The identity
\(
    \mu=\varepsilon\,\tau_\#\mu
\)
means that, for every Borel set \(B\subseteq[0,1]\),
\begin{align}\label{eq:pushforward-identity}
    \mu(B)
    =
    \varepsilon\,\mu\bigl(\tau^{-1}(B)\bigr).
\end{align}
Choose \(N\in\N\) minimal such that \(Nq>1\), and define
\begin{align*}
    E_j
    &:=
    [jq,(j+1)q),
    \qquad j=0,\ldots,N-2,\\
    E_{N-1}
    &:=
    [(N-1)q,1].
\end{align*}
Then \((E_j)_{j=0}^{N-1}\) is a partition of \([0,1]\).

We show inductively that the restriction of \(\mu\) to each \(E_j\) vanishes.
For \(j=0\), let \(B\subseteq E_0\) be Borel.
If \(a\in A\), then
\[
    \tau(a)\geq a+q\geq q,
\]
and hence \(\tau(a)\notin E_0\).
Thus \(\tau^{-1}(B)=\varnothing\), and \eqref{eq:pushforward-identity} yields \(\mu(B)=0\).
Consequently, \(\mu|_{E_0}=0\).

Now let \(j\in\{1,\ldots,N-1\}\) and suppose that \(\mu\big|_{E_0\cup\cdots\cup E_{j-1}}=0\).
For a Borel set \(B\subseteq E_j\) and \(a\in\tau^{-1}(B)\), we have
\[
    \tau(a)\in E_j
    \qquad\text{and}\qquad
    a\leq\tau(a)-q.
\]
If \(j\leq N-2\), then \(\tau(a)<(j+1)q\), and therefore \(a<jq\).\\
For \(j=N-1\), the minimality of \(N\) gives \(Nq>1\), and hence
\[
    a\leq 1-q<(N-1)q.
\]
Thus, in either case,
\[
    \tau^{-1}(B)
    \subseteq
    E_0\cup\cdots\cup E_{j-1}.
\]
By the induction hypothesis,
\(
    \mu(\tau^{-1}(B))=0
\),
and \eqref{eq:pushforward-identity} implies
\(
    \mu(B)=0
\).
Hence \(\mu|_{E_j}=0\).

We conclude, that by induction, \(\mu\) vanishes on every \(E_j\), and since these sets
partition \([0,1]\), we we must have \(\mu=0\).
\end{proof}

We make use of the following characterization of the contact set for the centered dual potentials constructed in Lemma \ref{lem:global-dual-feasibility}. Recall \(\Gamma_{f_x,\theta}
    =
    \bigl\{
        (a,b)\in T:
        f_x(a)+f_x(b)
        +\theta\bigl((b-a)-m\bigr)
        =(b-a)^2
    \bigr\} \).

\begin{lemma}[Characterization of the contact set of \((f_x,\theta)\)]
\label{lem:contact-set-characterization}
Let \(x<1\), and let \((f_x,\theta)\) be the feasible dual
potential in \eqref{def_fxvgx}. Define
\begin{align}\label{def_rx_taux_ax}
    r_x(a)
    &:=
    \frac{\theta-g_x'(a)}{2}, \qquad
    \tau_x(a)
    :=
    a+r_x(a),
    \qquad \text{and} \qquad
    A_x:=\{a\in[0,1]:\tau_x(a)\leq1\}.
\end{align}
Then the contact set of the dual potential \((f_x,\theta)\) is given by
\begin{align*}
    \Gamma_{f_x,\theta}
    =
    \bigl\{
        (a,\tau_x(a)):a\in A_x
    \bigr\}.
\end{align*}
In particular, for every \(a\in[0,1]\), there exists at most one
\(b\geq a\) such that \((a,b)\in\Gamma_{f_x,\theta}\).
\end{lemma}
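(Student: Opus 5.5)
The plan is to reuse, almost verbatim, the function \(G_x(u,r)=k_x(r)-g_x(u)-g_x(u+r)\) from the proof of Lemma~\ref{lem:global-dual-feasibility}. Since \(f_x=g_x+\theta m/2\), a pair \((a,b)\in T\) lies in \(\Gamma_{f_x,\theta}\) if and only if \(G_x(a,b-a)=0\). We therefore have to identify, for each fixed \(a\in[0,1]\), the zero set of \(r\mapsto G_x(a,r)\) on \([0,1-a]\).

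First, consider \(r\in[0,\theta]\). The proof of Lemma~\ref{lem:global-dual-feasibility} shows three facts:
\begin{itemize}
\item the map \(r\mapsto G_x(a,r)\) is strictly convex on \([0,\theta]\), since \(\partial_r^2G_x\geq 2-\tfrac{2}{N+1}>0\);
\item it has the critical point \(r_*(a)=r_x(a)\in(0,\theta)\);
\item its minimum value is \(H_x(a)=0\).
\end{itemize}
A strictly convex function has at most one minimizer, so the only zero in \([0,\theta]\) is \(r=r_x(a)\). Hence \(b=\tau_x(a)\) is the only candidate with \(b-a\leq\theta\).

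Second, exclude \(r>\theta\). Write \(r=r_0+n\theta\) with \(n\geq1\) and \(r_0\in[0,\theta)\). Identity \eqref{eq:G-periodic-increment} gives
\[
G_x(a,r)=G_x(a,r_0)+n\theta\bigl(2r_0+(n-1)\theta\bigr).
\]
The increment is strictly positive unless \(n=1\) and \(r_0=0\), i.e.\ \(r=\theta\), which is already covered. Since \(G_x(a,r_0)\geq0\), we get \(G_x(a,r)>0\) for every \(r>\theta\), so no contact points arise there. The delicate point is \(r_0=0\) with \(n=1\), which is exactly \(r=\theta\) and lies in the first case. There it is not a zero, because \(r_x(a)<\theta\) by \eqref{ineq_gp}.

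Combining both steps, \((a,b)\in\Gamma_{f_x,\theta}\) if and only if \(b=\tau_x(a)\) and \(b\leq 1\), i.e.\ \(a\in A_x\). Conversely, every \((a,\tau_x(a))\) with \(a\in A_x\) is in \(T\) (as \(r_x(a)>0\)) and satisfies \(G_x=0\). This yields the claimed description and the uniqueness of \(b\).

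The main obstacle is simply making sure the strictness claims are rigorous. Strict convexity holds because \(G_x(a,\cdot)\) is \(C^1\) with a strictly increasing derivative, which follows from the piecewise second-derivative bound and continuity of \(g_x'\). Equality at \(r=\theta\) must also be ruled out, and the argument above does this.
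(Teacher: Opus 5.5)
Your proposal is correct and follows essentially the same route as the paper. You reduce contact to zeros of \(G_x(a,\cdot)\), use strict convexity together with \(H_x\equiv 0\) from the feasibility proof to get the unique zero \(r_x(a)\in(0,\theta)\) on \([0,\theta]\), and exclude \(r>\theta\) via the periodic increment identity \eqref{eq:G-periodic-increment}, with the boundary case \(r=\theta\) ruled out because \(r_x(a)<\theta\).
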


\begin{proof}
Recall \eqref{def_fxvgx} that
\(
    f_x=g_x+\frac{\theta m}{2}.
\)
Hence, writing \(r=b-a\), the slack in the constraint of the dual problem \eqref{eq:OT_dual2} is
\begin{align*}
    &(b-a)^2
    -f_x(a)-f_x(b)
    -\theta\bigl((b-a)-m\bigr) 
    =
    k_x(r)-g_x(a)-g_x(a+r)
    =
    G_x(a,r);
\end{align*}
where the first equality follows from definition of \(f_x\) and \(k_x\) in \eqref{def_fxvgx} and \eqref{def:k_x}.
The last equality is by definition of \(G_x\) in \eqref{def_Gxar}.
Consequently,
\begin{align}
    (a,b)\in\Gamma_{f_x,\theta}
    \quad\Longleftrightarrow\quad
    G_x(a,b-a)=0.
    \label{eq:contact-G-zero}
\end{align}

Notice that \(r_x\) in \eqref{def_rx_taux_ax} coincides with the function \(r_*\) introduced in \eqref{eq:def-r-star}.
By the strict convexity argument following \eqref{eq_delGr}, for every fixed \(a\), the map
\(
    r\longmapsto G_x(a,r)
\)
is strictly convex on \([0,\theta]\).
Moreover, \eqref{eq_delGr}, \eqref{eq:def-r-star}, and \eqref{eq:opposite-slope-identity} imply that \(r_x(a)\in(0,\theta)\) is its unique minimizer, while \eqref{eq:G-unique-zero} gives
\begin{align*}
    G_x(a,r)=0
    \quad\Longleftrightarrow\quad
    r=r_x(a),
    \qquad r\in[0,\theta].
\end{align*}
It remains to exclude further zeros for \(r>\theta\).
Write
\[
    r=r_0+n\theta,
    \qquad
    r_0\in[0,\theta),\quad n\in\mathbb N_0.
\]
By \eqref{eq:G-periodic-increment},
\begin{align*}
    G_x(a,r)-G_x(a,r_0)
    =
    n\theta\bigl(2r_0+(n-1)\theta\bigr).
\end{align*}
For \(n\geq1\), the right-hand side of the above equation is strictly positive except when \((n,r_0)=(1,0)\).
In this exceptional case,
\[
    G_x(a,\theta)=G_x(a,0)>0,
\]
because the unique zero of \(G_x(a,\cdot)\) on \([0,\theta]\) is the interior point \(r_x(a)\).
Therefore,
\begin{align*}
    G_x(a,r)=0
    \quad\Longleftrightarrow\quad
    r=r_x(a)
\end{align*}
throughout the admissible range \(0\leq r\leq1-a\).

Combining this with \eqref{eq:contact-G-zero}, equality in the dual constraint holds precisely when
\[
    b=a+r_x(a)=\tau_x(a).
\]
The condition \(b\leq1\) is equivalent to \(a\in A_x\), and hence
\(
    \Gamma_{f_x,\theta}
    =
    \{(a,\tau_x(a)):a\in A_x\}.
\)
\end{proof}

\begin{proposition}[Optimality and uniqueness of the copulas \(C_x\)]
\label{prop:rho-phi-Cx}
For every \(x\in[-\frac12,1]\), the copula \(C_x\) in \eqref{def_C_x} uniquely maximizes Spearman's \(\varrho\) among all copulas \(C\) satisfying \(\phi(C)=x\).
In particular,
\[
    \varrho(C_x)=\overline{\varrho}(x).
\]
\end{proposition}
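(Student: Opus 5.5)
Optimality for $x<1$ comes directly from Corollary~\ref{cor_feas}. Lemma~\ref{lem_nux_in_PiT} gives $\nu_x\in\Pi_T$, and Lemma~\ref{lem:primal-feasibility} shows that $\nu_x$ is feasible. Lemma~\ref{lem:global-dual-feasibility} shows that $(f_x,\theta)$ is feasible, and Lemma~\ref{lem:equality-on-support} shows that $\nu_x$ is concentrated on $\Gamma_{f_x,\theta}$. Hence $\nu_x$ is optimal for \eqref{eq:lp}. Lemma~\ref{prop:pairlp} then yields $\overline{\varrho}(x)=1-12\int_T(b-a)^2\de\nu_x=1-6\E(U-V)^2=\varrho(C_x)$ for $(U,V)\sim C_x$. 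For $x=1$, the constraint forces $\E|U-V|=0$, so $U=V$ almost surely, which gives $C=M$ and the claim is immediate.

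For uniqueness with $x<1$, let $C$ be any copula with $\phi(C)=x$ and $\varrho(C)=\overline{\varrho}(x)$, and let $\pi=\mu_C$. Its symmetrization $\pi^{\mathrm{sym}}$ is also optimal, and $\nu:=\frac12(\min,\max)_\#\pi^{\mathrm{sym}}\in\Pi_T$ is feasible and optimal. Equality must then hold in the chain of Lemma~\ref{lem_opt} with the optimal potential $(f_x,\theta)$, because $\int f_x=\mathsf D(m)=\mathsf P(m)$. So $\nu$ is concentrated on $\Gamma_{f_x,\theta}$, and by Lemma~\ref{lem:contact-set-characterization} this is the graph $\{(a,\tau_x(a)):a\in A_x\}$. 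Consequently $\nu=(\mathrm{id},\tau_x)_\#\nu_1$, where $\nu_1$ is the first marginal of $\nu$, a measure on $A_x$. The marginal condition becomes $\nu_1+(\tau_x)_\#\nu_1=\lam$, and the same relation holds for $\nu_x$ with its first marginal $\nu_{x,1}$. Setting $\mu:=\nu_1-\nu_{x,1}$, a finite signed measure on $A_x$, we get $\mu=-(\tau_x)_\#\mu$. Since $\tau_x(a)-a=r_x(a)\ge(\theta-v)/2=:q>0$ by \eqref{ineq_gp}, Lemma~\ref{lem:no-upward-circulation} with $\varepsilon=-1$ gives $\mu=0$. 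Thus $\nu=\nu_x$, hence $\pi^{\mathrm{sym}}=\pi_x$ by Lemma~\ref{lem:pair-measure-bijection}.

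It remains to remove the symmetrization. Apply the same reasoning to $\pi$ directly: both $\pi$ and $S_\#\pi$ are optimal. Using the full-square dual constraint with $\varphi=\psi=f_x$, the support of $\pi$ must lie in the contact set in the square, namely $\Gamma_{f_x,\theta}\cup S(\Gamma_{f_x,\theta})$. Since $\pi\le 2\pi^{\mathrm{sym}}=2\pi_x$, the measure $\pi$ is absolutely continuous with respect to $\pi_x$. Write $\pi|_T=(\mathrm{id},\tau_x)_\#\alpha$ and $\pi|_{T^c}=S_\#(\mathrm{id},\tau_x)_\#\beta$, where $\alpha,\beta$ are measures on $A_x$ (a diagonal part cannot occur since $r_x>0$). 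The uniform-marginal conditions read $\alpha+(\tau_x)_\#\beta=\lam$ for the first marginal and $\beta+(\tau_x)_\#\alpha=\lam$ for the second. Subtracting gives $\alpha-\beta=(\tau_x)_\#(\alpha-\beta)$, and Lemma~\ref{lem:no-upward-circulation} with $\varepsilon=1$ yields $\alpha=\beta$. Hence $\pi$ is symmetric, so $\pi=\pi^{\mathrm{sym}}=\pi_x$ and $C=C_x$.

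The main obstacle is justifying that optimal measures must live on the contact set. This follows because the integrated slack in Lemma~\ref{lem_opt} is zero and the slack $G_x$ is continuous and nonnegative. One also has to check carefully that the marginal identities in terms of $\tau_x$ hold as measure identities on $[0,1]$.
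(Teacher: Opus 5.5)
Your proof is correct and follows the paper's argument essentially step for step. Optimality comes from the contact-set criterion. Uniqueness of the triangular optimizer follows from concentration on the graph of \(\tau_x\) and Lemma~\ref{lem:no-upward-circulation} with \(\varepsilon=-1\). Nonsymmetric maximizers are excluded by a second application with \(\varepsilon=1\), and your \(\tfrac12(\alpha-\beta)\) is exactly the paper's \(\mu=(\operatorname{proj}_1)_\#\bigl(\delta|_{\{a<b\}}\bigr)\).

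The only blemish is a normalization slip. With the paper's convention \(\mathsf D(m)=\sup 2\int f\), the identity should read \(2\int_0^1 f_x\,\de u=\mathsf D(m)=\mathsf P(m)\), i.e.\ \(\int_0^1 f_x\,\de u=\int_T(b-a)^2\,\de\nu_x\). That is all your equality argument actually uses.
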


\begin{proof}
We first show optimality of \(C_x\) for \(x\in[-\frac12,1)\).
Therefore, recall \(m=m(x) = \frac{1-x}{3}\) and \(\theta=\theta(x)\) due to \eqref{eq:mean-gap} and \eqref{def:vptheta}.
By Lemma \ref{lem_nux_in_PiT} and Lemma~\ref{lem:primal-feasibility}, the measure \(\nu_x\) defined in \eqref{def_nu_x} satisfies
\[
    \nu_x\in\Pi_T
    \qquad\text{and}\qquad
    \int_T(b-a)\,\de\nu_x(a,b)=\frac{m}{2}.
\]

Consider the centered potential \(f_x(u)=g_x(u)+\frac{\theta m}{2}\) for \(u\in[0,1]\).
Then, by Lemma~\ref{lem:global-dual-feasibility}, the pair \((f_x,\theta)\) is feasible, i.e.,
\[
    f_x(a)+f_x(b)
    +\theta\bigl((b-a)-m\bigr)
    \leq
    (b-a)^2 \quad \text{for all } (a,b)\in T.
\]
Moreover, Lemma~\ref{lem:equality-on-support} yields
\[
    g_x(a)+g_x(b)
    =
    k_x(b-a)
    =
    (b-a)^2-\theta(b-a)
\]
for \(\nu_x\)-almost every \((a,b)\in T\).
Consequently,
\begin{align*}
    &f_x(a)+f_x(b)
    +\theta\bigl((b-a)-m\bigr)
    =
    g_x(a)+g_x(b)+\theta(b-a)
    =
    (b-a)^2
\end{align*}
for \(\nu_x\)-almost every \((a,b)\in T\).
Thus, \(\nu_x\) is concentrated on the contact set of the feasible pair \((f_x,\theta)\).

Corollary~\ref{cor_feas} therefore implies that \(\nu_x\) is optimal for the triangular problem, i.e.,
\[
    \min_{\substack{\nu\in\Pi_T\\
    \int_T(b-a)\,\de\nu=m/2}}
    \int_T(b-a)^2\,\de\nu
    =
    \int_T(b-a)^2\,\de\nu_x.
\]
By the representation of \(\overline{\varrho}\) in \eqref{eq:lp}, it follows that
\[
    \overline{\varrho}(x)
    =
    1-12\int_T(b-a)^2\,\de\nu_x(a,b).
\]
On the other hand, since \(\pi_x=\nu_x+S_\#\nu_x\), we have
\[
    \varrho(C_x)
    =
    1-6\int_{[0,1]^2}(b-a)^2\,\de\pi_x(a,b)
    =
    1-12\int_T(b-a)^2\,\de\nu_x(a,b).
\]
Therefore, \(\varrho(C_x)=\overline{\varrho}(x)\).

We next prove uniqueness of the optimizer \(C_x\) for \(x\in[-\frac12,1)\).
The argument proceeds in two steps.
First, we prove uniqueness of the optimizer of the triangular problem.
By Lemma~\ref{lem:contact-set-characterization}, the contact set of the dual pair \((f_x,\theta)\) is the graph of the map \(\tau_x\).
Since this dual pair is optimal, complementary slackness forces every triangular optimizer to be concentrated on this graph.
The difference of two such optimizers is therefore determined by a finite signed measure \(\mu\) on the first coordinate.
The common marginal constraint then implies \(\mu=-(\tau_x)_\#\mu.\) Since \(\tau_x\) shifts every point by a strictly positive amount, Lemma~\ref{lem:no-upward-circulation} yields \(\mu=0\).

In the second step, we exclude nonsymmetric maximizers of the original copula problem.
Any maximizer has the same symmetrization as the candidate optimizer.
Hence, its difference from the candidate is an antisymmetric signed measure concentrated on the contact graph and its transpose.
Restricting this measure to the upper triangle yields an invariance relation under \(\tau_x\), and a second application of Lemma~\ref{lem:no-upward-circulation} completes the proof.

For the first step, recall from \eqref{def_rx_taux_ax} that \(\tau_x(a) = a+\frac{\theta-g_x'(a)}{2}. \) Since \(\lvert g_x'\rvert\leq v<\theta\) by \eqref{ineq_gp}, we have
\begin{align}\label{eq:tau-positive-shift}
    \tau_x(a)
    \geq
    a+q_x \qquad \text{for}
    \quad
    q_x:=\frac{\theta-v}{2}>0.
\end{align}

We first note that the particular dual pair \((f_x,\theta)\) is optimal.
Indeed, \((f_x,\theta)\) is dual feasible by Lemma~\ref{lem:global-dual-feasibility}, while \(\nu_x\) is primal feasible.
Moreover, Lemma~\ref{lem:equality-on-support} shows that equality in the dual constraint holds \(\nu_x\)-almost surely.
Consequently, the primal value attained by \(\nu_x\) coincides with the dual value attained by \((f_x,\theta)\).
Thus, \(\nu_x\) is primal optimal and \((f_x,\theta)\) is dual optimal.

Let now \(\widetilde\nu\) be any optimizer of the triangular problem in \eqref{eq:lp}.
Since \((f_x,\theta)\) is dual optimal, equality of the primal and dual values yields
\begin{align*}
    0
    =
    \int_T
    \Bigl[
        (b-a)^2
        -f_x(a)-f_x(b)
        -\theta\bigl((b-a)-m\bigr)
    \Bigr]
    \,\widetilde\nu(\de a,\de b).
\end{align*}
The integrand is nonnegative by dual feasibility.
Hence it must vanish \(\widetilde\nu\)-almost surely, so \(\widetilde\nu\) is concentrated on the contact set of \((f_x,\theta)\).
By Lemma~\ref{lem:contact-set-characterization},
\begin{align*}
    \Gamma_{f_x,\theta}
    =
    \{(a,\tau_x(a)):a\in A_x\}.
\end{align*}
The same is true for \(\nu_x\).
Now, set
\begin{align*}
    \zeta:=\widetilde\nu-\nu_x,
    \qquad
    \mu:=(\operatorname{proj}_1)_\#\zeta.
\end{align*}
Since both \(\widetilde\nu\) and \(\nu_x\) are concentrated on \(\Gamma_{f_x,\theta}\), so is the signed measure \(\zeta\).
Moreover, the projection onto the first coordinate is a bijection from \(\Gamma_{f_x,\theta} = \{(a,\tau_x(a)):a\in A_x\} \) onto \(A_x\), with inverse \(a\mapsto(a,\tau_x(a))\).
Consequently,
\begin{align}\label{proof_uniqueness2}
    \zeta
    &=
    (\operatorname{id},\tau_x)_\#\mu,
    &
    (\operatorname{proj}_2)_\#\zeta
    &=
    (\tau_x)_\#\mu.
\end{align}
Since \(\widetilde\nu\) and \(\nu_x\) are in \(\Pi_T\) and thus satisfy the same coupled marginal constraint in \eqref{def:Pi_T}, their difference satisfies
\begin{align}\label{proof_uniqueness3}
    (\operatorname{proj}_1)_\#\zeta
    +
    (\operatorname{proj}_2)_\#\zeta
    =0.
\end{align}
Therefore, equations \eqref{proof_uniqueness2} and \eqref{proof_uniqueness3} yield
\begin{align*}
    \mu=-(\tau_x)_\#\mu.
\end{align*}
By \eqref{eq:tau-positive-shift}, the map \(\tau_x\) satisfies the assumptions of Lemma~\ref{lem:no-upward-circulation}.
Applying the lemma with \(\varepsilon=-1\) gives \(\mu=0\), and hence \(\zeta=0\).
Consequently, \(\widetilde\nu=\nu_x.\) Thus the optimizer of the triangular problem is unique.

In the second step, we exclude nonsymmetric maximizers of the original copula problem.
Let \(\widetilde\pi\) be the coupling associated with any maximizing copula at the prescribed value \(x\), and define its symmetrization by
\begin{align*}
    \widetilde\pi^{\operatorname{sym}}
    :=
    \frac12
    \bigl(
        \widetilde\pi+S_\#\widetilde\pi
    \bigr).
\end{align*}
Since both the objective and the constraint are symmetric, \(\widetilde\pi^{\operatorname{sym}}\) is again optimal.
Its triangular representative is therefore \(\nu_x\) by the uniqueness proved above.
Hence, by Lemma~\ref{lem:pair-measure-bijection}, \(\widetilde\pi^{\operatorname{sym}}=\pi_x.\)

Now, set
\begin{align*}
    \delta:=\widetilde\pi-\pi_x.
\end{align*}
Since \(\pi_x\) is symmetric, the preceding identity implies \(S_\#\delta=-\delta.\) Moreover, both marginals of \(\delta\) vanish, and
\begin{align}\label{eq:sym-delta-support}
    \widetilde\pi+S_\#\widetilde\pi=2\pi_x.
\end{align}

Consider the symmetrized contact set \(K := \Gamma_{f_x,\theta} \cup S_\#\Gamma_{f_x,\theta}. \) Since the triangular representative \(\nu_x\) of \(\pi_x\) is concentrated on \(\Gamma_{f_x,\theta}\), the symmetric coupling \(\pi_x\) is concentrated on \(K\).
Hence \(\pi_x(K^c)=0\).
Evaluating \eqref{eq:sym-delta-support} on \(K^c\) gives
\begin{align*}
    \widetilde\pi(K^c)
    +
    S_\#\widetilde\pi(K^c)
    =0.
\end{align*}
Both terms are nonnegative, and hence \(\widetilde\pi(K^c)=0\).
Thus also \(\delta\) is concentrated on \(K\).

By \eqref{eq:tau-positive-shift}, \(\Gamma_{f_x,\theta}\) lies strictly above the diagonal, whereas \(S(\Gamma_{f_x,\theta})\) lies strictly below it.
In particular, \(K\) does not intersect the diagonal.
Define
\begin{align*}
    \eta:=\delta|_{\{a<b\}}.
\end{align*}
Since \(S_\#\delta=-\delta\) and \(\delta\) has no mass on the diagonal, we obtain
\begin{align*}
    \delta=\eta-S_\#\eta.
\end{align*}
Since the first marginal of \(\delta\) vanishes, it follows that \(    (\operatorname{proj}_1)_\#\eta = (\operatorname{proj}_2)_\#\eta. \) By the concentration of \(\delta\) on \(K\) established above and by \eqref{eq:tau-positive-shift}, the restriction \(\eta=\delta|_{\{a<b\}}\) is concentrated on \(\Gamma_{f_x,\theta}\), which, by Lemma~\ref{lem:contact-set-characterization}, is the graph of \(\tau_x\).
Setting
\[
    \mu:=(\operatorname{proj}_1)_\#\eta,
\]
we therefore have
\[
    \eta=(\operatorname{id},\tau_x)_\#\mu
    \qquad\text{and}\qquad
    (\tau_x)_\#\mu
    =
    (\operatorname{proj}_2)_\#\eta
    =
    (\operatorname{proj}_1)_\#\eta
    =
    \mu.
\]
Using \eqref{eq:tau-positive-shift}, we may apply Lemma~\ref{lem:no-upward-circulation} once more, now with \(\varepsilon=1\).
This gives \(\mu=0\), and hence \(\eta=0\).
Therefore \(\delta=0\), so \(\widetilde\pi=\pi_x.\) Thus the maximizing coupling, and hence the maximizing copula, is uniquely given by \(C_x\).

Finally, let \(x=1\).
Then \(\phi(C)=1\) implies \(\E|U-V|=0\), and hence \(U=V\) almost surely.
Thus, the only possible copula is the comonotonic copula \(C_1(u,v)=\min\{u,v\}\) for which \(\varrho(C_1)=1=\overline{\varrho}(1).\) This completes the proof.
\end{proof}

\section{Proofs of Section \ref{sec:intro}}\label{sec:proof1}

\begin{proof}[Proof of Theorem \ref{thm:main}.]
Fix \(x\in[-\frac12,1)\), and let \(N\in\mathbb N\) be such that \(x\in I_N = \left[1-\frac{3}{2N}, 1-\frac{3}{2N+2}\right)\).
Recall \(m=\frac{1-x}{3}\), \(L=\frac{1}{2N}\), and \(R=\frac{1}{2N+2}\) as well as \(I_N^L = \left[ 1-3L, 1-\frac 3 2 (L+R)\right)\) and \(I_N^R= \left[1-\frac 3 2 (L+R), 1-3R\right)\).

If \(x\in I_N^L\), then
\[
    m\in\left(\frac{L+R}{2},L\right].
\]
Hence, in this case, it is \(\ell=L\) and \(\Delta=L-m\).
By Proposition~\ref{prop_C_x}, we then have
\[
    \varrho(C_x)
    =
    1-6L(2m-L)
    -
    \frac{4(L-m)^{3/2}}{\sqrt{N(N+1)}} = 1-6\ell(2m-\ell)
    -
    \frac{4\Delta^{3/2}}{\sqrt{N(N+1)}}.
\]

If \(x\in I_N^R\), then
\[
    m\in\left(R,\frac{L+R}{2}\right].
\]
In this case, it is \(\ell=R\) and \(\Delta=m-R\).
Again by Proposition~\ref{prop_C_x}, it follows that
\[
    \varrho(C_x)
    =
    1-6R(2m-R)
    -
    \frac{4(m-R)^{3/2}}{\sqrt{N(N+1)}}
    =
    1-6\ell(2m-\ell)
    -
    \frac{4\Delta^{3/2}}{\sqrt{N(N+1)}}.
\]
At the common boundary point of \(I_N^L\) and \(I_N^R\), we have \( m=\frac{L+R}{2}\).
In this case, both \(L\) and \(R\) are equally close to \(m\), and \(L(2m-L)=LR=R(2m-R)\), while \(L-m=m-R=\frac{L-R}{2}\).
Hence, the two expressions coincide.

By Proposition~\ref{prop:rho-phi-Cx}, the copula \(C_x\) uniquely maximizes Spearman's rho among all copulas with Spearman's footrule equal to \(x\).
Consequently,
\[
    \overline{\varrho}(x)
    =
    \varrho(C_x)
    =
    1-6\ell(2m-\ell)
    -
    \frac{4\Delta^{3/2}}{\sqrt{N(N+1)}}.
\]
In particular, the maximum is uniquely attained by the symmetric copula \(C_x\).

It remains to verify the second representation in \eqref{eq:boundary}.
Recall that
\[
    u(x)
    =
    1-\frac{2}{3}(1-x)^2
    =
    1-6m^2.
\]
Since \(m=\ell\pm\Delta\), we have in either case \(m^2 = \ell(2m-\ell)+\Delta^2 \).
Therefore,
\begin{align*}
    \overline{\varrho}(x)
    &=
    1-6\ell(2m-\ell)
    -
    \frac{4\Delta^{3/2}}{\sqrt{N(N+1)}}\\
    &=
    1-6m^2+6\Delta^2
    -
    \frac{4\Delta^{3/2}}{\sqrt{N(N+1)}}\\
    &=
    u(x)
    -
    \left(
        \frac{4\Delta^{3/2}}{\sqrt{N(N+1)}}
        -
        6\Delta^2
    \right).
\end{align*}
This proves \eqref{eq:boundary}.
The assertion at \(x=1\) follows from Proposition~\ref{prop:rho-phi-Cx}.
\end{proof}

\begin{proof}[Proof of Corollary \ref{cor:main}.]
The sharp lower bound for Spearman's rho in terms of Spearman's footrule gives
\[
    \varrho(C)
    \geq
    \underline{\varrho}(x)
    =
    \frac{2\sqrt{3}}{9}(1+2x)^{3/2}-1
\]
for every copula \(C\) satisfying \(\phi(C)=x\), and this lower bound is attained for every \(x\in[-\frac12,1]\); see \cite[Theorem 8]{kokolbukovsek2024exact}.
Together with the upper bound established in Theorem \ref{thm:main}, every copula \(C\) satisfies
\[
    \underline{\varrho}(\phi(C))
    \leq
    \varrho(C)
    \leq
    \overline{\varrho}(\phi(C)).
\]
It follows that
\[
    \Omega_{\varrho,\phi}
    \subseteq
    \left\{
        (\varrho,x):
        x\in[-\tfrac12,1],\
        \underline{\varrho}(x)
        \leq \varrho\leq
        \overline{\varrho}(x)
    \right\}.
\]

To prove the reverse inclusion, fix \(x\in[-\frac12,1]\).
Let \(C_x^+\coloneqq C_x\) be the copula in \eqref{def_C_x} attaining the upper boundary, and let \(B_x\) be the Bertino copula in \eqref{def:lower-bertino} attaining the lower boundary.
Thus
\[
    \phi(C_x^+)=\phi(B_x)=x, \qquad 
    \varrho(C_x^+)=\overline{\varrho}(x),
    \qquad \text{and} \qquad
    \varrho(B_x)=\underline{\varrho}(x).
\]
For \(t\in[0,1]\), define
\[
    C_{x,t}
    :=
    tC_x^++(1-t)B_x.
\]
Since the class of copulas is convex, \(C_{x,t}\) is again a copula.
Moreover, both \(\phi\) and \(\varrho\) are affine in the copula.
Hence,
\[
    \phi(C_{x,t})
    =
    t\phi(C_x^+)+(1-t)\phi(B_x)
    =
    x,
\qquad \text{and} \qquad
    \varrho(C_{x,t})
    =
    t\overline{\varrho}(x)
    +(1-t)\underline{\varrho}(x).
\]
As \(t\) ranges over \([0,1]\), the latter expression ranges over the entire interval \([\underline{\varrho}(x),\overline{\varrho}(x)]\).
Thus, every point between the lower and upper boundary is attained.

Finally, for \(x=1\), the identity
\[
    \phi(C)=1
    \quad\Longrightarrow\quad
    \E|U-V|=0
\]
implies \(U=V\) almost surely.
Hence, the only possible copula is \(M\), and \(\varrho(M)=\phi(M)=1\).
Therefore,
\[
    \Omega_{\varrho,\phi}
    =
    \left\{
        (\varrho,x):
        x\in[-\tfrac12,1],\
        \frac{2\sqrt{3}}{9}(1+2x)^{3/2}-1
        \leq \varrho\leq
        \overline{\varrho}(x)
    \right\},
\]
as claimed.
\end{proof}

\begin{proof}[Proof of Proposition \ref{prop:sharp-CS}]
For \(m=0\), we have \(V_{\operatorname{min}}(0)=0\), and \eqref{eq:interpretation-sharp-cs} reduces to \(\E(U-V)^2\geq0\).
The bound is attained by the comonotone coupling \(U=V\).

Now let \(m>0\) and put \(x=1-3m\).
By \eqref{eq_rep_phi_rho}, maximizing \(\varrho(C)\) under the constraint \(\phi(C)=x\) is equivalent to minimizing \(\E(U-V)^2\) over all couplings \(U,V\sim\cU(0,1)\) satisfying \(\E|U-V|=m\).
Hence, Theorem~\ref{thm:main} yields
\begin{align*}
    \min_{\substack{U,V\sim\cU(0,1)\\ \E|U-V|=m}}
    \E(U-V)^2
    =
    \ell(2m-\ell)
    +
    \frac{2\Delta_m^{3/2}}{3\sqrt{N(N+1)}}.
\end{align*}
Since
\(
    \ell(2m-\ell)=m^2-\Delta_m^2,
\)
we obtain
\begin{align*}
    \min_{\substack{U,V\sim\cU(0,1)\\ \E|U-V|=m}}
    \E(U-V)^2
    =
    m^2+
    \frac{2\Delta_m^{3/2}}{3\sqrt{N(N+1)}}-\Delta_m^2
    =
    m^2+V_{\operatorname{min}}(m),
\end{align*}
which proves \eqref{eq:interpretation-sharp-cs}.
By Theorem~\ref{thm:main}, the minimum is attained by \(C_x\), proving sharpness.

It remains to characterize the zeros of the correction term.
Since \(\ell\) is an endpoint nearest to \(m\), we have \(0\leq\Delta_m\leq\frac1{4N(N+1)}\).
Consequently,
\begin{align*}
    V_{\operatorname{min}}(m)
    &=
    \Delta_m^{3/2}
    \left(
        \frac2{3\sqrt{N(N+1)}}-\sqrt{\Delta_m}
    \right),
\end{align*}
and
\begin{align*}
    \frac2{3\sqrt{N(N+1)}}-\sqrt{\Delta_m}
    \geq
    \frac1{6\sqrt{N(N+1)}}
    >0.
\end{align*}
Thus \(V_{\operatorname{min}}(m)=0\) if and only if \(\Delta_m=0\).
By the choice
\(
    m\in(\frac1{2N+2},\frac1{2N}],
\)
this is equivalent to \(m=1/(2N)\).
Together with the case \(m=0\), this proves \(V_{\operatorname{min}}(m)=0\) if and only if \(m\in\mathfrak C\).
\end{proof}

\begin{proof}[Proof of Corollary \ref{cor:rank-gap-variance}.]
Put \(Z\coloneqq|U-V|\), \(m\coloneqq\E Z\), and \(x\coloneqq1-3m\).
By Lemma~\ref{lem:moments},
\[
    \Var(Z)=\frac{1-\varrho(C)}{6}-m^2.
\]
Substituting the upper boundary \(\brho(x)\) from Theorem~\ref{thm:main} gives \(V_{\operatorname{min}}(m)\), while substituting
\[
    \underline{\varrho}(1-3m)
    =
    2(1-2m)^{3/2}-1
\]
gives \(V_{\operatorname{max}}(m)\).
Both bounds are attained by \(C_x\) and \(B_x\), respectively.

It remains to maximize \(V_{\operatorname{max}}\) on \([0,\tfrac12]\).
Its derivative is
\[
    V_{\operatorname{max}}'(m)
    =
    \sqrt{1-2m}-2m,
\]
so its unique maximizer is \(m_0=(\sqrt5-1)/4\).
A direct substitution gives
\[
    V_{\operatorname{max}}(m_0)
    =
    \frac{5(3-\sqrt5)}{24},
\]
and \(x_0=1-3m_0=(7-3\sqrt5)/4\).
This proves \eqref{eq_cor:rank-gap-variance} and the equality statements.
\end{proof}

\section{Proofs of Section \ref{sec:appl_main_thm}}\label{sec:proof2}

\begin{proof}[Proof of Theorem \ref{cor:permutation-costs}.]
	We embed the permutation into a copula.
	Partition \((0,1]\) into the rank cells \(I_i\coloneqq((i-1)/n,i/n]\), and define the measure-preserving map
	\[
		Q_\pi(0)\coloneqq0,
		\qquad
		Q_\pi(u)\coloneqq u+\frac{\pi(i)-i}{n},
		\qquad u\in I_i.
	\]
	If \(U\) is uniform on \([0,1]\) and \(V=Q_\pi(U)\), then \(V\) is also uniform, so the copula of \((U,V)\) is a shuffle of \(M\) \cite{mikusinski1992shuffles}.
	Conditional on \(U\in I_i\), the difference \(V-U\) is the constant \((\pi(i)-i)/n\).
	Therefore
	\[
		\E|U-V|=m_\pi,
		\qquad
		\E(U-V)^2=q_\pi.
	\]
	In other words, the normalized finite-ranking costs in \eqref{eq:permutation-costs} are exactly the two copula moments considered above.
	Since
	\[
		q_\pi
		=\E|U-V|^2
		=m_\pi^2+\Var(|U-V|),
	\]
	Corollary~\ref{cor:rank-gap-variance} gives \eqref{eq:permutation-envelope}.
\end{proof}

\begin{proof}[Proof of Lemma \ref{lem:psi-mixability-centers}.]
Let \(U:=U'+\tfrac12\) and \(V:=\tfrac12-V'\).
Then \(U,V\sim\cU(0,1)\) and, by \eqref{eq:centered-distance},
\begin{align*}
    \Psi(U',V')
    =
    |U-V|.
\end{align*}
Hence, the assertion is equivalent to the existence of a coupling \(U,V\sim\cU(0,1)\) such that \(|U-V|=m\) almost surely.
This is equivalent to equality in the Cauchy--Schwarz inequality \eqref{eq:CSimp}.
By Proposition~\ref{prop:sharp-CS}, such equality is attainable if and only if \(m\in\mathfrak C\).
\end{proof}

\begin{proof}[Proof of Theorem \ref{cor:distance-mixability}.]
For \(U',V'\sim\cU(-\tfrac12,\tfrac12)\), define \(U:=U'+\tfrac12\) and \(V:=\tfrac12-V'\).
Then \(U,V\sim\cU(0,1)\) and, by \eqref{eq:centered-distance}, \(\Psi(U',V') = |U-V|. \) Hence,
\begin{align*}
    \mathsf V(m)
    =
    \min\left\{
        \Var(|U-V|):
        U,V\sim\cU(0,1),\
        \E|U-V|=m
    \right\}.
\end{align*}
Since \(\Var(|U-V|) = \E(U-V)^2-m^2\), Proposition~\ref{prop:sharp-CS} implies \(\mathsf V(m) = V_{\operatorname{min}}(m).\) The final assertion follows from Proposition~\ref{prop:sharp-CS}(iii).
\end{proof}

\begin{proof}[Proof of Theorem \ref{cor:xi-rank-sobol}.]
Let \(C\) be the copula of \((X,Y)\).
Then Corollary~\ref{cor:main} implies for \(x = \xi(C) = \phi(C\ast C)\) that
\begin{align*}
    \underline{\varrho}(x) \leq \varrho(C\ast C) \leq \brho(x).
\end{align*}
By non-negativity of the copula correlation ratio and its representation in \eqref{eq_rep_ccorr}, we obtain
\begin{align}\label{eq_cor21eq3}
    \max\!\left\{
        0,\underline{\varrho}\bigl(\xi(X,Y)\bigr)
    \right\}
    \leq
    \eta(X,Y)
    \leq   \brho(\xi(X,Y)).
\end{align}
For \(U = F_Y(Y)\), the statement now follows from
\begin{align*}
    \eta(X,Y) = 12 \E\left(\int_0^1 \left(P(U\leq t \mid X) - t\right) \de t\right)^2 \leq 12 \E \int_0^1 \left(P(U\leq t \mid X) - t \right)^2 \de t = 2 \xi(X,Y),
\end{align*}
applying Cauchy--Schwarz inequality.
\end{proof}

\begin{proof}[Proof of Proposition \ref{prop:xi-rank-sobol-inner}.]

We first show that \(\Omega_{\xi,\eta}\) is convex.
Let \((X_1,Y_1)\) and \((X_2,Y_2)\), with continuous marginal distribution functions, attain \((\xi_1,\eta_1)\) and \((\xi_2,\eta_2)\), respectively.
Set
\[
    U_j:=F_{Y_j}(Y_j),
    \qquad
    T_j:=F_{X_j}(X_j),
    \qquad j=1,2.
\]
Then \(U_j,T_j\sim\cU(0,1)\).
Moreover, since \(F_{X_j}\) is continuous, conditioning on \(T_j\) is equivalent to conditioning on \(X_j\), up to null sets.

Let \(B\sim\operatorname{Bernoulli}(\alpha)\), \(\alpha\in[0,1]\), be independent of both models, with \(P(B=1)=\alpha\), and define
\begin{align*}
    \widetilde X
    &:=
    \begin{cases}
        \alpha T_1, & B=1,\\
        \alpha+(1-\alpha)T_2, & B=0,
    \end{cases}
    &
    \widetilde U
    &:=
    \begin{cases}
        U_1, & B=1,\\
        U_2, & B=0.
    \end{cases}
\end{align*}
Conditionally on \(B=1\), \(\widetilde X\) is uniform on \([0,\alpha]\), whereas conditionally on \(B=0\), it is uniform on \([\alpha,1]\).
Hence \(\widetilde X\sim\cU(0,1)\).
Moreover, since \(U_1,U_2\sim\cU(0,1)\), also \(\widetilde U\sim\cU(0,1)\).
Thus, setting \(\widetilde Y:=\widetilde U\), both marginal distributions of \((\widetilde X,\widetilde Y)\) are continuous.

On \(\{B=1\}\), we have \((\widetilde{U}\mid \widetilde{X}) \eqd (U_1\mid X_1)\), where as on \(\{B=0\}\), it is \((\widetilde{U}\mid \widetilde{X}) \eqd (U_2\mid X_2)\).
Let \(\widetilde U'\) denote a conditionally independent copy of \(\widetilde U\) given \(\widetilde X\).
Then
\begin{align*}
    \E|\widetilde U-\widetilde U'|
    =
    \alpha\E|U_1-U_1'|
    +(1-\alpha)\E|U_2-U_2'|,
\end{align*}
and therefore
\begin{align}\label{eq_convxi12}
    \xi(\widetilde X,\widetilde Y)
    =
    \alpha\,\xi_1(X_1,Y_1)+(1-\alpha)\,\xi_2(X_2,Y_2).
\end{align}
Further, for \(M_j:=\E[U_j\mid X_j]\), we have \(\E M_j=1/2\) and thus
\begin{align*}
    \Var\bigl(\E[\widetilde U\mid\widetilde X]\bigr)
    &=
    \alpha\Var(M_1)
    +(1-\alpha)\Var(M_2).
\end{align*}
This gives
\begin{align}\label{eq_conveta12}
    \eta(\widetilde X,\widetilde Y)
    =
    \alpha\eta_1+(1-\alpha)\eta_2.
\end{align}
Combining \eqref{eq_convxi12} and \eqref{eq_conveta12} yields \(\alpha(\xi_1,\eta_1) +(1-\alpha)(\xi_2,\eta_2) \in\Omega_{\xi,\eta}\), which proves that \(\Omega_{\xi,\eta}\) is convex.

For the lower inner curve \(\eta_\ell\), let \(t\in[0,1]\), let \(U\sim\cU(0,1)\), and put \(a_t:=(1-t)/2\) and \(b_t:=(1+t)/2\).
Define
\begin{align*}
    X_t
    :=
    \begin{cases}
        \min\{U,1-U\}, & U\notin[a_t,b_t],\\
        U, & U\in[a_t,b_t].
    \end{cases}
\end{align*}
Outside the interval \([a_t,b_t]\), the level sets
\(\{u\colon X_t(u)=x\}\) of \(X_t\) are the symmetric pairs
\(\{x,1-x\}\), whereas inside the interval they are singletons.
The distribution function of \(X_t\) is continuous.
Consequently, for \(x<a_t\), conditionally on \(X_t=x\), the rank
\(U\) is equally likely to be \(x\) or \(1-x\), whereas for
\(x\in[a_t,b_t]\), it is uniquely determined by \(U=x\).
Direct integration yields
\begin{align*}
    \xi(X_t,U)
    &=
    \frac{1+3t^2}{4} \quad \text{and} \quad 
    \eta(X_t,U)
    =
    12\Var\bigl(\E[U\mid X_t]\bigr)
    =
    t^3.
\end{align*}
Eliminating \(t\) gives
\begin{align*}
    \eta
    =
    \left(\frac{4\xi-1}{3}\right)^{3/2},
    \qquad
    \frac14\leq\xi\leq1,
\end{align*}
which is the second branch of \(\eta_\ell\).
The endpoint \(t=0\) gives \((1/4,0)\), while independence gives \((0,0)\).
By convexity, the entire segment \(\{(x,0):0\leq x\leq1/4\}\) is attainable by models with continuous marginals, proving the first branch of \(\eta_\ell\).

For the upper inner curve \(\eta_u\), fix \(a\in[0,1/2]\) and set
\begin{align*}
    h_a(s):=\min\{s,a,1-s\},
    \qquad 0\leq s\leq1.
\end{align*}
For \(\sigma\in\{-1,1\}\), define
\begin{align}\label{def_G_sigma}
    G_\sigma(s):=s+\sigma h_a(s).
\end{align}
The functions \(G_{-1}\) and \(G_1\) are distribution functions and satisfy
\begin{align}\label{prop_G_sigma}
    \frac{G_{-1}(s)+G_1(s)}{2}=s.
\end{align}
Let \(S\) be uniformly distributed on \(\{-1,1\}\) and, conditionally on \(S=\sigma\), let \(U\) have distribution function \(G_\sigma\).
Then, by \eqref{prop_G_sigma}, \(U\sim\cU(0,1)\).

To obtain a continuous conditioning variable, let \(R\sim\cU(0,1)\) be independent of \((S,U)\) and define
\begin{align*}
    X
    :=
    \begin{cases}
        R/2, & S=-1,\\
        (1+R)/2, & S=1.
    \end{cases}
\end{align*}
Then \(X\sim\cU(0,1)\), and \(S\) is measurable with respect to \(X\).
Moreover, the additional variable \(R\) carries no information about \(U\) given \(S\).
Hence
\begin{align}\label{eq_mix_S}
    P(U\leq s\mid X)
    =
    P(U\leq s\mid S)
    =
    G_S(s).
\end{align}
Thus, two conditionally independent draws given \(X\) define the same conditional-i.i.d.\ model as conditioning directly on \(S\).
Since
\begin{align*}
    \int_0^1h_a(s)^2\,\de s
    &=
    a^2-\frac43a^3 \qquad \text{and} \qquad
    \int_0^1h_a(s)\,\de s
    =
    a(1-a),
\end{align*}
substitution into \eqref{eq:ciid-coordinates} gives
\begin{align*}
    \xi_0(a)
    &:=
    2a^2(3-4a)\qquad \text{and} \qquad
    \eta_0(a)
    :=
    12a^2(1-a)^2.
\end{align*}

Now let \(J\) be uniformly distributed on \(\{1,\ldots,n\}\), independently of \(S\), and let \(W\) denote the response rank in the preceding binary model, i.e.,
\[
    \operatorname{law}(W\mid S=\sigma)=G_\sigma,
    \qquad \sigma\in\{-1,1\}.
\]
Define
\[
    U:=\frac{J-1+W}{n}.
\]
Thus, conditionally on \(J=j\), the binary model is mapped affinely from \([0,1]\) onto the interval
\(
[(j-1)/n,j/n]
\).
The conditioning state is therefore the pair \((J,S)\): the variable \(J\)
determines the interval containing \(U\), while \(S\) determines the
conditional distribution of \(U\) within that interval.

To encode this discrete state by a continuous conditioning variable, set
\begin{align*}
    K:=2(J-1)+1_{\{S=1\}},
\end{align*}
and let \(R\sim\cU(0,1)\) be independent of \((J,S,U)\). Then
\begin{align*}
    X:=\frac{K+R}{2n}
\end{align*}
is uniformly distributed on \([0,1]\) and determines the pair \((J,S)\).
Hence the resulting model has continuous marginals and belongs to
\(\Omega_{\xi,\eta}\).

Conditionally on \((J,S)\), let \(W'\) be an independent copy of \(W\)
and put
\(
U'=(J-1+W')/n.
\)
Then
\begin{align*}
    |U-U'|
    =
    \frac{1}{n}|W-W'|,
\end{align*}
and hence
\begin{align*}
    1-\xi
    =
    3\E|U-U'|
    =
    \frac{1-\xi_0(a)}{n}.
\end{align*}
Therefore,
\begin{align*}
    \xi
    =
    1-\frac{1-\xi_0(a)}{n}
    =
    x_n(a).
\end{align*}

For the copula correlation ratio, let
\(
\mu_S:=\E[W\mid S].
\)
Then
\begin{align*}
    \E[U\mid J,S]
    =
    \frac{J-1+\mu_S}{n}
    =
    \frac{J-\frac12}{n}
    +
    \frac{\mu_S-\frac12}{n}.
\end{align*}
Thus the conditional mean consists of the midpoint of the \(J\)-th interval and the binary perturbation inherited from the base model.
Since \(J\) and \(S\) are independent,
\begin{align*}
    \Var\bigl(\E[U\mid J,S]\bigr)
    &=
    \frac{1}{n^2}
    \left(
        \Var(J-1)+\Var(\mu_S)
    \right)
    =
    \frac{1}{12n^2}
    \left(
        n^2-1+\eta_0(a)
    \right),
\end{align*}
where we used
\(
\Var(J-1)=(n^2-1)/12
\)
and
\(
\eta_0(a)=12\Var(\mu_S).
\)
Consequently,
\begin{align*}
    \eta
    =
    12\Var\bigl(\E[U\mid J,S]\bigr)
    =
    1-\frac{1-\eta_0(a)}{n^2}
    =
    y_n(a).
\end{align*}
For every fixed \(n\in\N\), the curve
\(   \left\{
        (x_n(a),y_n(a)):a\in[0,\tfrac12]
    \right\}
\)
is compact, and these curves converge uniformly to \((1,1)\) as \(n\to\infty\).
Hence \(\mathcal S_{u}\) is compact.
Since \(\Omega_{\xi,\eta}\) is convex,
\(\operatorname{conv}(\mathcal S_{u})
    \subseteq
    \Omega_{\xi,\eta}.
\)
Moreover, the projection of \(\mathcal S_{u}\) onto its first coordinate is \([0,1]\).
Indeed, for \(x\in [0,1)\), write \(q=1-x\) and choose \(n=\lfloor\tfrac1q\rfloor\).
Then \(1-nq\in[0,1/2]\), which belongs to the range of \(a\mapsto2a^2(3-4a)\) for \(a\in [0,1/2]\).
Thus there exists \(a\in[0,1/2]\) such that \(x_n(a)=x\).
Since \(\operatorname{conv}(\mathcal S_u)\) is compact, the maximum defining \(\eta_u(x)\) is attained.
Its concavity follows from the convexity of \(\operatorname{conv}(\mathcal S_{u})\).

It remains to verify that the lower construction does not exceed the upper one.
Fix \(x\) and choose \((n,a)\) such that \(x_n(a)=x\).
Put \(r:=1-\xi_0(a)\), so that \(1-x=r/n\).
For \(x\geq1/4\),
\begin{align*}
    \eta_\ell(x)
    &=
    \left(1-\frac{4r}{3n}\right)^{3/2}
    \leq
    1-\frac{4r}{3n}
    \leq
    1-\frac{1-\eta_0(a)}{n^2}
    =
    y_n(a),
\end{align*}
where the second inequality follows from
\begin{align*}
    \frac43\bigl(1-\xi_0(a)\bigr)
    -
    \bigl(1-\eta_0(a)\bigr)
    =
    \frac{1+12a^2-40a^3+36a^4}{3}
    \geq0.
\end{align*}
For \(x\leq1/4\), the same ordering follows from nonnegativity.
Finally, convexity of \(\Omega_{\xi,\eta}\) fills the vertical segment between the attained lower and upper points for every fixed \(x\), proving \eqref{eq:xi-rank-sobol-inner}.
\end{proof}

\section*{Acknowledgement}

The first author was funded in whole by the Austrian Science Fund (FWF) {[10.55776/PAT1669224]} project \emph{Stochastic orders for functional dependence}.

\bibliographystyle{plainnat}
\bibliography{Spearman_arXiv}

\end{document}